\numberwithin{equation}{section}
\definecolor{db}{RGB}{0, 0, 130}
\definecolor{rp}{rgb}{0.25, 0, 0.75}
\definecolor{dg}{rgb}{0, 0.5, 0}
\newcommand{\R}{\mathbb{R}}
\newcommand{\N}{\mathbb{N}}
\newcommand{\EE}{\mathbb{E}}
\newcommand*\diff{\mathop{}\!\mathrm{d}}
\newcommand{\fBm}{B}
\def\namedlabel#1#2{\begingroup
    #2
    \def\@currentlabel{#2}
    \phantomsection\label{#1}\endgroup
}
\newtheorem{definition}{Definition}[section]
\newtheorem{theorem}[definition]{Theorem}
\newtheorem{prop}[definition]{Proposition}
\newtheorem{corollary}[definition]{Corollary}
\newtheorem{lemma}[definition]{Lemma}
\newtheorem{proposition}[definition]{Proposition}
\newtheorem{remark}[definition]{Remark}
\author{El Mehdi Haress\footnote{Universit\'e Paris-Saclay, CentraleSup\'elec, MICS and CNRS FR-3487, France.} \textsuperscript{,}\footnote{\texttt{el-mehdi.haress@centralesupelec.fr}} \and 
Alexandre Richard$^*$\textsuperscript{,}\footnote{\texttt{alexandre.richard@centralesupelec.fr} \newline 
 E.H. acknowledges the support of the Labex de Math\'ematique Hadamard. This work is supported by the SIMALIN project ANR-19-CE40-0016 and the SDAIM project ANR-22-CE40-0015 from the French National Research Agency.}}
\title{ \Large{\textbf{Long time Hurst regularity of fractional SDEs and their ergodic means}}}
\begin{document}

\maketitle

\begin{abstract}
The fractional Brownian motion can be considered as a Gaussian field indexed by $(t,H)\in {\R_{+}\times (0,1)}$, where $H$ is the Hurst parameter. On compact time intervals, it is known to be almost surely jointly H\"older continuous in time and Lipschitz continuous in $H$. First, we extend this result to the whole time interval $\mathbb{R}_{+}$ and consider both simple and rectangular increments. 
Then we consider SDEs driven by fractional Brownian motion with contractive drift. The solutions and their ergodic means are proven to be almost surely H\"older continuous in $H$, uniformly in time. This result is used in a separate work for statistical applications. We also deduce a sensibility result of the invariant measure in $H$.

The proofs are based on variance estimates of the increments of the fractional Brownian motion and fractional Ornstein-Uhlenbeck processes, multiparameter versions of the Garsia-Rodemich-Rumsey lemma and a combinatorial argument to estimate the expectation of a product of Gaussian variables.
\end{abstract}

\noindent\textit{\textbf{Keywords and phrases:}  Fractional Brownian motion, Hurst sensitivity, Ergodic SDEs.} 

\medskip

\noindent\textbf{MSC2020 subject classification: }60G22, 60H10, 37H10.

\section{Introduction}\label{sec:intro}
We consider the fractional Brownian motion (fBm) $\fBm$ as a stochastic process of two parameters given by its Mandelbrot-Van Ness representation: $\forall t \in \R, \ \forall H \in (0,1)$,
\begin{align}\label{eq:fBm}
 \fBm_t^H = \frac{1}{\Gamma(H+\frac{1}{2})} 
 \int_{\R} \left((t-s)_{+}^{H-\frac{1}{2}} - (-s)_{+}^{H-\frac{1}{2}} \right) \diff W_s ,
\end{align}
where the integral is in the Wiener sense. For fixed $H$, this process appears in models of physics \cite{Russo}, biology \cite{ROT}, finance \cite{GJR} to cite but a very few. Except when $H=\frac{1}{2}$ (Brownian case), $H$ usually encodes the long-range memory of the model or its roughness. It is therefore important to estimate its value precisely and to evaluate the sensitivity of functionals of the model at hand, with respect to $H$. Concerning the estimation of $H$ in a parametric setting, we refer for instance to \cite{BerzinEtAl,Tudor} and references therein. 
As for the sensitivity in the Hurst parameter, it has been studied in various situations and is an important topic in modeling: in \cite{KochNeuenkirch}, the fractional Brownian motion is proven to be infinitely differentiable with respect to its Hurst parameter for a fixed time; in \cite{jolis2010continuity,jolis2007continuity}, the law of (multiple) integrals with respect to the fBm are proven to be continuous in $H$;  
in \cite{richard2015fractional}, the H\"older continuity in $H$ is studied for generalised fractional Brownian fields (over compact index sets); in \cite{DeVecchiEtAl}, the  solution to rough differential equations driven by a 1D fBm is proven to be continuous with respect to $H$ when $H \in (\frac{1}{3}, \frac{1}{2}]$; and in \cite{GiordanoJolisQS}, the laws of  quasilinear stochastic wave and heat equations with additive fractional noise are proven to be continuous in $H$.
Finally in \cite{richard2016h}, the difference between functionals of a fractional stochastic differential equation (SDE) and its Markovian counterpart ($H=\frac{1}{2}$) are proven to be of order $|H-\frac{1}{2}|$, both for the law of the solution on a compact time interval, and for the law of a singular functional (namely the first hitting time).

~

In this paper, the main results concern the joint time and Hurst regularity of the fBm on unbounded time intervals, and of ergodic fractional SDEs. These results have direct applications to determine statistically the Hurst parameter from discrete observations, see the last paragraph of the Introduction and our companion paper \cite{HRstat}. In particular, Proposition \ref{cor:supH-bound}, Theorem \ref{th:ergodic-OU} and Theorem \ref{cor:ergodic-OU-discret} are useful in this statistical perspective. 

~

To introduce our methodology, it is natural and interesting to start by discussing the joint time and Hurst regularity of the fBm on a non-compact time interval. In the case of ergodic SDEs, we will then elaborate on the arguments of the fBm case. 

Almost sure regularity estimates in $t$ and $H$ can be obtained classically on compact intervals: considering the moments of the increments of $B$ in time, and applying Kolmogorov's continuity theorem, one gets that $B$ admits a modification (that we still denote by $B$) which almost surely has a finite $(H-\varepsilon)$-H\"older norm on $[0,T]$. However this norm depends on $T$. 
One can also derive from \eqref{eq:fBm} an upper bound on the moments of $B^H_{t}-B^{H'}_{t}$
 (see \cite{richard2015fractional,Decreusefond}) and deduce that
 $B$ is almost surely $(1-\varepsilon)$-H\"older continuous in the Hurst parameter, uniformly in $t\in [0,T]$. Altogether we know that for $\varepsilon\in (0,1)$ and a compact interval $\mathcal{H}\subset(0,1)$, there exist a continuous modification of $B$ (still denoted by $B$ thereafter) and a random variable $\mathbf{C}_{T}$ with finite moments such that almost surely
\begin{align}\label{eq:simpleInc1}
\forall H,H'\in \mathcal{H},\, \forall t,t'\in [0,T], ~ |\fBm_t^H -\fBm_{t'}^{H'}| \leq \mathbf{C}_T \left(|t-t'|^{H\wedge H'-\varepsilon} + |H-H'|^{1-\varepsilon}\right) .
\end{align}
We observe that the law of the iterated logarithm for fractional Brownian motion \cite{Orey,MonradRootzen} shows that for fixed $H$, $B^H$ cannot grow much faster than $t^H$ at infinity. Hence $\mathbf{C}_{T}$ is expected to grow slightly faster than $T^H$.
For multiparameter processes, it is also natural and sometimes more useful to consider the rectangular increments (see e.g. \cite{Khosh,hu2013multiparameter}, as well as \cite{Xiao} for considerations on the joint time and space regularity of local times of Gaussian processes). In fact, such increments are also useful for statistical purposes as they are crucial in the proof of Proposition \ref{cor:supH-bound}, which is then used in \cite{HRstat}. For $B$, the rectangular increment between $(t,H)$ and $(t',H')$ reads
\begin{align}\label{eq:rect-increments}
\Box_{(t,H)}^{(t',H')} B = B_{t}^H-B_{t'}^H-B_{t}^{H'}+B_{t'}^{H'} .
\end{align}
Compared to \eqref{eq:simpleInc1}, the upper bound on $|\Box_{(t,H)}^{(t',H')} B|$ is now the product $\mathbf{C}_{T} \, |t-t'|^{H\wedge H'-\varepsilon} \, |H-H'|^{1-\varepsilon}$. 
The results for these two types of increments (simple and rectangular) are presented respectively in Theorem \ref{thm:whole-regularity} and Theorem \ref{thm:regularityGarsia-rectangular} for parameters $H$ in a compact subset of $(0,1)$ and $t$ on the whole half-line, hence making explicit the dependence of $\mathbf{C}_T$ with respect to $T$ and providing sharp H\"older exponents. For the simple increments, this reads
\begin{align}\label{eq:reg-intro}
\forall t' \ge t \ge 0,\, \forall H,H' \in \mathcal{H}, ~ 
| \fBm_t^H - \fBm_{t'}^{H'} |  \leq \mathbf{C}\, (1+t')^{\max(\mathcal{H})+\varepsilon}  \big( 1 \wedge |t-t'|^{\min(\mathcal{H})} + |H-H'| \big)^{1-\varepsilon}
\end{align} 
almost surely, for some integrable random variable $\mathbf{C}$ that depends only on $\varepsilon$. The exponents in \eqref{eq:reg-intro} are close to optimal up to $\varepsilon$, as discussed in Remark~\ref{rk:optimality}.
The proofs of these theorems rely on the introduction of an auxiliary process $\mathbb{B}$ defined as
\begin{align*}%
\mathbb{B}_t^H = (1+t)^{-(\max \mathcal{H}+\varepsilon)} \, B_t^H , ~ t\in \R_+, H\in \mathcal{H} ,
\end{align*}
whose variance is bounded uniformly in time. Using multiparameter versions of the Garsia-Rodemich-Rumsey (GRR) lemma, the supremum of the increments of $\mathbb{B}$ can be controlled on compact sets by a quantity independent of time, which leads to the proof of Theorems \ref{thm:whole-regularity} and \ref{thm:regularityGarsia-rectangular}. 

~

In the second part of the paper, we study the fractional SDE 
\begin{align*}
Y_t^H = Y_0 + \int_0^t b(Y_s^H) \diff s + B_t^H ,
\end{align*}
 and its ergodic means. This equation is understood as a random ODE and is solved by classical fixed-point methods when $b$ is Lipschitz. See also \cite[Section 5.3]{nualartbook} for well-posedness when the noise is multiplicative and \cite{Hairer} for the existence and uniqueness of an invariant measure when $b$ is contractive. 
 If $b$ is Lipschitz and bounded, the Hurst regularity of the solution behaves as in \eqref{eq:reg-intro}, but with an extra exponential factor in time (see Remark~\ref{rk:boundedDrift}). Since we are interested in the long-time behaviour of solutions, it is natural to try to get rid of this exponential dependence in time. We first study this problem for the fractional Ornstein-Uhlenbeck (OU) process, see Proposition \ref{thm:regularity-OU}.  
Then assuming that the drift $b$ is contractive, we can control the trajectories of $Y^H$ by those of the OU process with the same parameter $H$. Then we obtain in Theorem~\ref{thm:regularity-SDE} that
almost surely, for all $t \ge 0$ and $H,H' \in \mathcal{H}$,
\begin{align*}
|Y_t^{H} - Y_{t}^{H'}| \leq \mathbf{C}\, (1+t)^{\varepsilon}\, |H-H'|^{1-\varepsilon} .
\end{align*}
Finally we focus on the regularity in the Hurst parameter of the ergodic means of $Y$. In Theorem~\ref{th:ergodic-OU}, we show that for $\mathcal{H}$ a compact subset of $(0,1)$, the following holds almost surely: for all $t \ge 0$ and $H,H' \in \mathcal{H}$,
\begin{align}\label{eq:SDEergodic-regularity}
\frac{1}{t+1} \int_0^{t+1} | Y_s^{H}-Y_s^{H'} |^2 \diff s  \leq \mathbf{C}\,   |H-H'|^{1-\varepsilon} .
\end{align}
The main technical tool to reach this result is an upper bound on the increments in $t$, $H$ and $H'$ of the ergodic mean of the fractional OU process (Proposition \ref{prop:ergodic-OU}). As an intermediate step, we relied on a Gaussian equality (Lemma \ref{lem:gaussian-product}): if $Z=(Z_{1},\dots, Z_{n})$ is a centred Gaussian vector with $\EE Z_{i}^2 = \EE Z_{j}^2$, then $\mathbb{E} [ \prod_{i=1}^n ( Z_i^2 - \mathbb{E} Z_i^2)]$ can be expressed as a sum of products of the covariances $\EE[Z_{i}Z_{j}]$ for $i\neq j$. 

In fact, when the drift $b$ is Lipschitz and coercive, we know that the SDE has a unique invariant measure $\mu_H$ (see \textit{e.g.} \cite{Hairer} and \cite[Lemma 3$(ii)$]{cohen2011approximation}). Hence in Corollary \ref{cor:reg-invariant}, we deduce from \eqref{eq:SDEergodic-regularity} that the invariant measure $\mu_H$ is almost $1/2$-H\"older continuous in $H$.

By similar computations, we derive in Theorem \ref{cor:ergodic-OU-discret} a discrete analog of \eqref{eq:SDEergodic-regularity}. Namely if $M^H$ denotes the solution of the Euler scheme associated to the SDE satisfied by $Y^H$ with time-step $\gamma>0$, then almost surely, for all $N\in \N^*$ and $H,H' \in \mathcal{H}$,
\begin{align*}
\frac{1}{N} \sum_{k=1}^{N} | M_{k \gamma}^{H}-M_{k \gamma}^{H'} |^2  \leq \mathbf{C}   |H-H'|^{1-\varepsilon} .
\end{align*}

\paragraph{Statistical applications.} 
In \cite{HRstat}, these results are used to build an ergodic statistical estimator of $H$. In fact, assuming that the drift $b$ is Lipschitz and coercive, we know that the SDE has a unique invariant measure $\mu_H$ (see \textit{e.g.} \cite{Hairer} and \cite[Lemma 3$(ii)$]{cohen2011approximation}). Suppose that the solution $Y^H$ is observed at discrete times $\{kh , k \in \mathbb{N} \}$ for some time-step $h >0$, then by \cite[Proposition 3.3]{panloup2020general},
\begin{align}\label{eq:invariant-measure-approx}
\lim_{n \rightarrow \infty} d\left( \frac{1}{n} \sum_{k=0}^{n-1} \delta_{Y_{k h}^H}, \mu_H\right) = 0 ,
\end{align}
where $d$ is any distance bounded by the Wasserstein distance and $\delta_{Y_{kh}^H}$ denotes the Dirac measure at $Y_{kh}^H$. Assuming that the invariant measure identifies the Hurst parameter, it is then natural to consider the estimator 
\begin{align*}
\widehat{H}^n := \underset{K \in (0,1)}{\text{argmin}} \ d\left(\frac{1}{n} \sum_{k=0}^{n-1} \delta_{Y_{k h}^H}, \mu_K\right) . 
\end{align*}
In practice, $\mu_K$ is usually not known and can for instance be approximated using an Euler scheme with time-step $\gamma$. For $K \in (0,1)$, we define
\begin{align*}
M^K_0 = Y^K_0, \quad  M^K_{(k+1)\gamma}  = M^K_{k \gamma} + \gamma b(M^K_{k\gamma}) + B^K_{(k+1)\gamma}-B^K_{k\gamma}.
\end{align*}
Similarly to \eqref{eq:invariant-measure-approx}, by \cite[Proposition 4.1]{cohen2011approximation}, we have
\begin{align*}
\lim_{\gamma \rightarrow 0, \, N \rightarrow \infty} d\left( \frac{1}{N} \sum_{k=0}^{N-1} \delta_{M_{k \gamma}^K}, \mu_K\right) = 0 .
\end{align*} 
Therefore in practice, for $n, N \in \mathbb{N}$ and $ \gamma >0 $, the estimator reads
\begin{align}\label{eq:estimatorH}
\widehat{H}^{n,N,\gamma} = \underset{K \in (0,1)}{\text{argmin }} d\left(\frac{1}{n} \sum_{k=0}^{n-1} \delta_{Y_{k h}^H}, \frac{1}{N} \sum_{k=0}^{N-1} \delta_{M_{k \gamma}^K} \right) .
\end{align}
Studying the convergence of $\widehat{H}^{n,N,\gamma}$ therefore boils down to studying the argmin of the random function $f: K \mapsto d(\frac{1}{n} \sum_{k=0}^{n-1} \delta_{Y_{k h}^H}, \frac{1}{N} \sum_{k=0}^{N-1} \delta_{M_{k \gamma}^K})$. This requires in particular some regularity properties of $f$. For $K,K'$ in $(0,1)$, we have by the triangle inequality
\begin{align*}
|f(K) -f(K') | \leq d \left( \frac{1}{N} \sum_{k=0}^{N-1} \delta_{M_{k \gamma}^K}, \frac{1}{N} \sum_{k=0}^{N-1} \delta_{M_{k \gamma}^{K'}} \right).
\end{align*}
For instance, if $d$ is the $2$-Wasserstein distance, we get
\begin{align*}
|f(K) -f(K') |^2 \leq \frac{1}{N} \sum_{k=0}^{N-1}  |M_{k \gamma}^K- M_{k \gamma}^{K'} |^2 .
\end{align*}   
So it becomes necessary to know the regularity of ergodic means in the Hurst parameter. This is the main focus of Theorem \ref{th:ergodic-OU} and Theorem \ref{cor:ergodic-OU-discret}. Using these results, we show in \cite[Theorem 2.6]{HRstat} that $\widehat{H}^{n,N,\gamma}$ converges to $H$ as $n,N \rightarrow \infty$ and $\gamma \rightarrow 0$, and obtain a rate of convergence \cite[Theorem 2.8]{HRstat}.
 Ergodic estimators of $H$ offer a practical way of estimating the Hurst parameter, but we also refer to~\cite{kubilius2012rate,Tudor} for alternative methods with vanishing time step (without ergodicity assumptions).

\paragraph{Organisation of the paper.} 
In Section \ref{sec:lower-upper-b}, we obtain sharp upper bounds on the moments of the simple and rectangular increments of both $B$ and $\mathbb{B}$. Theorems \ref{thm:whole-regularity} and \ref{thm:regularityGarsia-rectangular} are stated and proven in Section \ref{sec:mainresults}.
The regularity for solutions of fractional additive SDEs, Theorem \ref{thm:regularity-SDE}, is presented in Section \ref{sec:application-reg} and the regularity for ergodic means, Theorem \ref{th:ergodic-OU}, appears in Section \ref{sec:application-ergodic}. In Section \ref{sec:application-discrete}, we consider discrete-time SDEs.
 The analysis in both Section \ref{sec:application} and \ref{sec:application-discrete} is based on a comparison with fractional OU processes, which are studied in Appendix \ref{sec:app}. We prove upper bounds on their moments in Lemma \ref{lem:Ubar-increments}, and on moments of ergodic means of both OU and discrete OU processes in Lemma \ref{lem:ergodic-U2} and Lemma \ref{lem:ergodic-U2-dsicrete}. Finally the proofs of the results from Section~\ref{sec:mainresults} on the rectangular increments are gathered in Appendix~\ref{app:rect-inc}.

\paragraph{Notations.}
We denote by $C$ a constant independent of time and the Hurst parameter. It may change from line to line. We also denote by $\llbracket 1, N \rrbracket$ the set of integers between $1$ and $N \in \mathbb{N} \setminus \{ 0 \}$.

\section{Bounds on the variance of the increments of $B$ and $\mathbb{B}$}\label{sec:lower-upper-b}

We first study  the regularity in $t$ and $H$ of the moments of the increments of $B$ and $\mathbb{B}$. As mentioned in the Introduction, this permits to introduce smoothly the methodology that is based on moment estimates and variations of the GRR lemma, and that will also be used and developed with further arguments in Sections~\ref{sec:application} and \ref{sec:application-discrete}.

\subsection{Simple increments}
The first proposition below is essential to obtain a.s. bounds on the simple and rectangular increments of the fBm. It will also be useful in Sections~\ref{sec:application} and \ref{sec:application-discrete} for the regularity of fractional SDEs and their ergodic means.

\begin{prop}\label{prop:ub-justfBm}
Let $\mathcal{H}$ be a compact subset of $(0,1)$. There exists a constant $C$ such that for all $t'\ge t \ge 0$ and $H,H' \in \mathcal{H}$:
\begin{align}\label{eq:variance-ub}
\EE\left( B_t^H - B_{t'}^{H'} \right)^2 & \leq C\, |t-t'|^{2H'} + C\, \big(t^{2H}\vee t^{2H'}\big) \, (\log^2(t)+1)\,  |H-H'|^2  .
\end{align}
\end{prop}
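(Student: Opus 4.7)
The plan is to split the left-hand side via the $L^2$-triangle inequality into a pure time increment at fixed Hurst parameter $H'$ and a pure Hurst increment at fixed time $t$:
\begin{align*}
\EE(B_t^H - B_{t'}^{H'})^2 \le 2\,\EE(B_t^H - B_t^{H'})^2 + 2\,\EE(B_t^{H'} - B_{t'}^{H'})^2.
\end{align*}
By the defining variance property of fBm, the second summand equals $C_{H'}|t-t'|^{2H'}$ with $C_{H'}$ uniformly bounded for $H'\in\mathcal{H}$, which accounts for the first term of \eqref{eq:variance-ub}. It then suffices to establish the Hurst regularity estimate
\begin{align*}
\EE(B_t^H - B_t^{H'})^2 \le C(t^{2H}\vee t^{2H'})(\log^2 t + 1)\,|H-H'|^2
\end{align*}
for $t>0$; the case $t=0$ is trivial as $B_0^H = 0$.

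To get this estimate, I would exploit the joint self-similarity encoded in the Mandelbrot--Van Ness representation \eqref{eq:fBm}. Changing variables $s = tu$ and setting $\widetilde{W}_u := t^{-1/2}W_{tu}$ (a standard Brownian motion \emph{independent of $H$}), one obtains, for every $t>0$,
\begin{align*}
B_t^H = t^H\,\bar B^H, \qquad \bar B^H := \frac{1}{\Gamma(H+\tfrac{1}{2})}\int_\R \left((1-u)_+^{H-\frac{1}{2}}-(-u)_+^{H-\frac{1}{2}}\right)\diff \widetilde{W}_u.
\end{align*}
Since the same $\widetilde{W}$ serves all $H$, the pair $(B_t^H, B_t^{H'})$ has the same joint law as $(t^H\bar B^H,\, t^{H'}\bar B^{H'})$. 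I would then decompose
\begin{align*}
t^H\bar B^H - t^{H'}\bar B^{H'} = t^H\,(\bar B^H-\bar B^{H'}) + (t^H-t^{H'})\,\bar B^{H'}
\end{align*}
and treat the two pieces separately.

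For the first piece, I invoke the classical Lipschitz-in-$H$ estimate at time $1$, namely $\EE(\bar B^H - \bar B^{H'})^2 \le C|H-H'|^2$, valid uniformly for $H,H' \in \mathcal{H}$. This is obtained by differentiating the integrand $g(u,H) = \frac{(1-u)_+^{H-1/2}-(-u)_+^{H-1/2}}{\Gamma(H+1/2)}$ in $H$ and checking that $\int_\R (\partial_H g)^2 \,du$ is bounded uniformly on $\mathcal{H}$ (see \cite{richard2015fractional,Decreusefond}). For the second piece, the mean value theorem applied to $x \mapsto t^x$ yields $t^H - t^{H'} = (H-H')\,t^\xi \log t$ for some $\xi$ between $H$ and $H'$. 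Since $t^{2\xi} \le t^{2H} \vee t^{2H'}$ for every $t>0$ (the maximum is attained at $\max(H,H')$ when $t\ge 1$ and at $\min(H,H')$when $t<1$), and $\EE(\bar B^{H'})^2$ is uniformly bounded on $\mathcal{H}$, combining the two pieces yields the desired inequality.

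The main (minor) obstacle is the uniform Lipschitz-in-$H$ bound at time $1$: differentiating the kernels $(1-u)_+^{H-1/2}$ and $(-u)_+^{H-1/2}$ in $H$ introduces logarithmic factors near the singularities $u=1$ and $u=0$, and one must verify that the resulting integrand remains square-integrable uniformly in $H\in\mathcal{H}$. This is standard thanks to the compactness of $\mathcal{H} \subset (0,1)$. The $\log^2 t$ factor appearing in \eqref{eq:variance-ub} then comes purely from differentiating the scaling factor $t^H$ in $H$.
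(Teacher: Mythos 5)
Your argument is correct. The first step (pivoting through $B_t^{H'}$ to isolate a pure time increment, which contributes $C|t-t'|^{2H'}$, and a pure Hurst increment at fixed $t$) is identical to the paper's. Where you diverge is in the treatment of $\EE(B_t^H-B_t^{H'})^2$: the paper works directly with the kernels $K_1(H,t,\cdot)$ and $K_2(H,t,\cdot)$ at general $t$, writes the difference as $(H-H')\int_0^1\partial_H K_i(x_h,t,\cdot)\,dh$, and extracts the factors $t^{2H}\vee t^{2H'}$ and $\log^2 t$ by the substitution $u=s/t$ inside the resulting integrals (the logarithm arising from $\log(t+s)=\log t+\log(1+u)$). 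You instead use the joint self-similarity $(B_t^H,B_t^{H'})\overset{(d)}{=}(t^H\bar B^H,t^{H'}\bar B^{H'})$, which reduces everything to the time-$1$ Lipschitz estimate $\EE(\bar B^H-\bar B^{H'})^2\le C|H-H'|^2$ (available in \cite{Decreusefond,richard2015fractional}, as the paper itself notes) plus the mean value theorem applied to $x\mapsto t^x$. The two computations are ultimately the same differentiation of $t^{x}\times(\text{kernel at time }1)$ in $H$, split into its two terms, but your organization makes the provenance of the $t^{2H}\vee t^{2H'}$ and $\log^2 t$ factors transparent and confines the genuinely analytic work (square-integrability of the $H$-derivative of the kernel, uniformly on $\mathcal{H}$, including the behaviour at $u\to-\infty$ as well as at the singularities $u=0,1$) to the compact-time case already in the literature. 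The paper's version is self-contained at the cost of redoing that verification with the $t$-dependence carried along. Both yield the stated bound; your bound $t^{2\xi}\le t^{2H}\vee t^{2H'}$ by monotonicity of $x\mapsto t^{2x}$ is the right observation to make the MVT step uniform in $t>0$.
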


\begin{proof}
Using $\fBm_t^{H'}$ as a pivot term, we have
\begin{align}\label{eq:first-ub}
\mathbb{E} \left( \fBm_t^H - B_{t'}^{H'} \right)^2 & \leq 2 \mathbb{E} \left(\fBm_t^H - B_{t}^{H'} \right)^2 + 2 \mathbb{E} \left(\fBm_{t}^{H'} - B_{t'}^{H'} \right)^2 \nonumber \\
&= 2 \mathbb{E} \left(\fBm_t^H - B_{t}^{H'} \right)^2 + 2 (t'-t)^{2H'}.
\end{align}
The estimation of $\EE\big( B_t^H - B_{t}^{H'} \big)^2$ has been done in a compact in \cite{Decreusefond} (see Lemma 3.2 therein) and in a more abstract framework in \cite{richard2015fractional} (see Corollary 3.4 therein for $t\in [0,1]$). For the sake of completeness, we provide here a different proof for $t\in \R_{+}$.

Using the Mandelbrot-Van Ness representation of the fBm $\fBm$, we get
\begin{align*}
\mathbb{E} &\left( \fBm_t^H - B_{t}^{H'} \right)^2 \\
&  =\EE \left( \int_{-\infty}^0 \big(  K_1(H,t,-s) -  K_{1}(H',t,-s) \big) \diff W_s + \int_0^t  \big(K_2(H,t,s)  -  K_2(H',t,s)\big)  \diff W_s \right)^2  ,
\end{align*}
where the kernels $K_{1}$ and $K_{2}$ are given by
\begin{align}\label{eq:noyaux}
K_1(H,t,-s) & :=  \frac{(t-s)^{H-1/2}-(-s)^{H-1/2}}{\Gamma(H+1/2)} \nonumber \\
K_2(H,t,s) & = \frac{(t-s)^{H-1/2}}{\Gamma(H+1/2)} .
\end{align}
By the isometry property of the Wiener integral, we get
\begin{align*}
\mathbb{E}\left( \fBm_t^H - \fBm_{t}^{H'} \right)^2 
&= \int_{0}^\infty \big( K_1(H,t,s) - K_1(H',t,s) \big)^2 \diff s  
 + \int_0^t \big( K_2(H,t,s) - K_2(H',t,s) \big)^2 \diff s  .
\end{align*}
Then with the notations $x_h := (H-H') h+H'$ and $\partial_{H} K_{1}$ (resp. $\partial_{H} K_{2}$) for the derivative of $K_{1}$ (resp. $K_{2}$) with respect to its first variable, we obtain
\begin{align}\label{eq:ub-firstdecomp3}
\mathbb{E}\left( \fBm_t^H - \fBm_{t}^{H'} \right)^2 & \leq 2 | H-H' |^2 \int_{0}^{1} \int_0^{\infty}  \left( \partial_H K_1(x_h,t,s)\right)^2  + \mathds{1}_{s\leq t}   \left( \partial_H K_2(x_h,t,s)\right)^2 \diff s  \diff h .
\end{align} 
The integrals $ \int_0^{\infty}  \left( \partial_H K_1(x_h,t,s)\right)^2 \diff s $ and $\int_0^{\infty} \mathds{1}_{s\leq t} \left( \partial_H K_2(x_h,t,s)\right)^2 \diff s $ will be treated separately, respectively in Step $1$ and Step $2$. These two quantities are of the same order and bounded by $C ( t^{2H}  \vee t^{2H'} ) \, (\log^2(t)+1)$.

\paragraph{Step 1: Uniform upper bound on $ \int_0^{\infty}  \left( \partial_H K_1(x_h,t,s)\right)^2 \diff s $.\\} 
First, the Gamma function is positive and bounded away from $0$ on $[\frac{1}{2},\frac{3}{2}]$. It follows that the derivative of $x\mapsto \frac{1}{\Gamma(x)}$ is also bounded on $[\frac{1}{2},\frac{3}{2}]$. 

Hence
\begin{align*}
\partial_H K_1(x,t,s) & =\frac{(t+s)^{x-1/2}\log(t+s)-s^{x-1/2}\log(s)}{\Gamma(x+1/2)} + ( (t+s)^{x-1/2}- s^{x-1/2})  \Big(\frac{1}{\Gamma}\Big)'(x+1/2) ,
\end{align*}
so that
\begin{align*}
(\partial_H K_1(x,t,s))^2 & \leq C \left( (t+s)^{x-1/2}\log(t+s)-s^{x-1/2}\log(s) \right)^2 + C \left((t+s)^{x-1/2}- s^{x-1/2} \right)^2 .
\end{align*}
After integrating over $s$ and applying the change of variables $u= s/t$, we get
\begin{align*}
\int_0^{\infty} &\left(\partial_H K_1(x_h,t,s)\right)^2 \diff s  \leq C \, t^{2x_h} \int_0^{\infty} \Bigl( (1+u)^{x_h-1/2}- u^{x_h-1/2} \Bigr)^2 \diff u\\
& + C \, t^{2x_h} \int_0^{\infty} \Big( \big((1+u)^{x_h-1/2}-u^{x_h-1/2}\big) \, \log t 
+ \big( (1+u)^{x_h-1/2}\log(1+u)-u^{x_h-1/2}\log(u) \big) \Big)^2 \diff u  .
\end{align*}
It follows that
\begin{align*}
\int_0^{\infty} &\left(\partial_H K_1(x_h,t,s)\right)^2 \diff s  \leq C  \, t^{2x_h }[\log^2(t)+1]  \\
&\quad \times \sup_{h \in \mathcal{H}} \left( \int_0^{\infty} \left( (1+s)^{h-1/2} - s^{h-1/2} \right)^2 \diff s  + \int_0^{\infty} \left( \log(1+s) (1+s)^{h-1/2} - \log(s) s^{h-1/2} \right)^2 \diff s \right) \\
& \leq C \,  t^{2x_h }[\log^2(t)+1] ~ \sup_{h \in \mathcal{H}} \Bigg( \int_0^{\infty} \left( (1+s)^{h-1/2} - s^{h-1/2} \right)^2 \diff s \\
&\quad + \int_0^{\infty} \left( \log(1+s) - \log(s) \right)^{2} (1+s)^{2h-1} \diff s +  \int_0^{\infty} \log(s)^2 \left( (1+s)^{h-1/2}- s^{h-1/2} \right)^2 \diff s \Bigg) \\
 &=: C \,  t^{2x_h }[\log^2(t)+1] \, \sup_{h \in \mathcal{H}} (I_{1}(h) + I_{2}(h) + I_{3}(h)) .
\end{align*}
Let us detail why $I_2(h)$ is finite for any $h \in (0,1)$. Integrability near $0$ comes from $\left( \log(1+s) - \log(s) \right)^{2} (1+s)^{2h-1} \underset{s\to 0+}{\sim} \log(s)^{2}$. For the integrability between $1$ and $+\infty$,
\begin{align*}
\left| \int_1^{\infty} \left( \log(1+s) - \log(s) \right)^2 (1+s)^{2h-1} \diff s \right| & = \left| \int_1^{\infty} \left( \int_0^1\frac{1 }{(s+r)} \diff r  \right)^2 (1+s)^{2h-1} \diff s \right|  \\
& \leq \int_1^\infty (1+s)^{2h-1} s^{-2} \diff s .
\end{align*} 
Integrability near $+\infty$ follows since $(1+s)^{2h-1} s^{-2} \underset{s\to+\infty}{\sim} s^{2h-3}$ and $h<1$. Hence $I_2(h) < + \infty$. Then one can check using similar techniques that for any $h\in (0,1)$, the integrals $I_{i}(h), \, i=1,3$ are finite. Using the same estimates, $h\mapsto I_{2}(h)$ is continuous on $\mathcal{H}$ by dominated convergence. Similarly, the mappings $h\mapsto I_{i}(h), \, i=1,3$ are continuous on the compact set $\mathcal{H}$.
Since $x_h \leq \max(H,H')$, we thus get
\begin{align}\label{eq:derive-H-F1}
\int_0^{\infty} \left(\partial_H K_1(x_h,t,s)\right)^2 \diff s & \leq C  \left( t^{2H} \vee t^{2H'} \right)[\log^2(t)+1] .
\end{align}

\paragraph{Step 2: Uniform upper bound on $\int_0^{t}  \left( \partial_H K_2(x_h,t,s)\right)^2 \diff s $.\\} 
Recall the expression of $K_2$ in \eqref{eq:noyaux}. Then we get
\begin{align*}
\partial_H K_2(x,t,s) & =  \frac{(t-s)^{x-1/2}\log(t-s)}{\Gamma(x+1/2)} + (t-s)^{x-1/2}\left(\frac{1}{\Gamma}\right)'(x+1/2) 
\end{align*}
and using the boundedness of $\Gamma$ and $(\frac{1}{\Gamma})'$ on $[\frac{1}{2},\frac{3}{2}]$, 
\begin{align*}
\int_0^{t}  \left( \partial_H K_2(x_h,t,s)\right)^2 \diff s & \leq C \int_0^{t}  (t-s)^{2x-1} \left( \log^2(t-s)+ 1  \right) \diff s .
\end{align*}
We now make the change of variables $u= s/t$ to get that
\begin{align*}
\int_0^{t}  \left( \partial_H K_2(x_h,t,s)\right)^2 \diff s & \leq C\, t^{2 x_h} [\log^2(t)+1] \int_0^{1}  (1-u)^{2x-1} \left( \log^2(1-u)+ 1 \right) \diff u .
\end{align*}
It follows that
\begin{align}\label{eq:derive-H-F2}
\int_0^{t}  \left( \partial_H K_2(x_h,t,s)\right)^2 \diff s & \leq C \left( t^{2H}  \vee t^{2H'} \right) [\log^2(t)+1] .
\end{align}

\paragraph{Step 3: Conclusion.} 
Plugging the bounds \eqref{eq:derive-H-F1} and \eqref{eq:derive-H-F2} into \eqref{eq:ub-firstdecomp3} we get
\begin{align*}
\begin{array}{ll}
\mathbb{E}\left( \fBm_t^H - \fBm_{t}^{H'} \right)^2 & \leq C \left( \left( t^{2H} \vee t^{2H'} \right) [\log^2(t)+1] \, | H-H' |^2 \right) .
\end{array}
\end{align*}
Using the above bound in \eqref{eq:first-ub} gives the desired result.
\end{proof}
As stated in the Introduction, the bound from Proposition \ref{prop:ub-justfBm} is not sufficient to deduce regularity estimates of the fBm which are uniform in time (i.e. for all $t\in \R_{+}$). Indeed, for $T \ge 1$, $\mathcal{H}$ a compact set of $(0,1)$ and $\mathcal{K}_T := [0,T] \times \mathcal{H}$, using a GRR argument (Lemma \ref{lem:GRR}), we would obtain as we shall see later that
\begin{align*}
& \mathbb{E} \sup_{\substack{(t,H),\, (t',H') \in \mathcal{K}_{T}\\ (t,H)\neq (t',H')}} \  \frac{| B_t^H - B_{t'}^{H'} |^p}{ \left( |t'-t|^{\min(\mathcal{H})  } + |H-H'| \right)^{p-\varepsilon}  }
\\ & \quad \leq C  \int_0^T \int_0^T \int_{\mathcal{H}} \int_{\mathcal{H}} \frac{\mathbb{E}| \fBm_s^h  -  \fBm_{s'}^{h'} |^p}{\left(|s-s'|^{\min(\mathcal{H})}+ |h-h'| \right)^{p-\varepsilon}} \diff h' \diff h \diff s' \diff s .
\end{align*}
In view of Proposition \ref{prop:ub-justfBm}, letting $T \rightarrow \infty$ yields an infinite bound which does not allow to conclude. That is why we consider the process
\begin{align}\label{eq:def-X-process}
\mathbb{B}_t^H = (1+t)^{-\alpha} B_t^H ,~t\in \R_{+},\, H\in (0,1),
\end{align}
for some $\alpha\geq 0$. The next proposition gives a bound similar to \eqref{eq:variance-ub} for $\mathbb{B}$, which will be useful to obtain the almost sure regularity estimates of the fBm via the GRR Lemma applied to $\mathbb{B}$ (Theorem \ref{thm:whole-regularity}). We will then minimise the value of $\alpha$ to keep $\mathbb{B}^H$ bounded in time: as explained in the Introduction, $\alpha$ is related to the growth of $B^H$ and is therefore expected to be close to $H$.

\begin{prop} \label{prop:variance-ub}
Let $\mathcal{H}$ be a compact subset of $(0,1)$. For $\alpha \ge 0$, consider the multiparameter process $\mathbb{B}$ defined in \eqref{eq:def-X-process}. There exists a constant $C$, independent of $\alpha$, such that
for all $t' \ge t \ge 0$ and $H,H' \in \mathcal{H}$,
\begin{align*}
\mathbb{E}\left( \mathbb{B}_t^H - \mathbb{B}_{t'}^{H'} \right)^2 & \leq C\, (1+t)^{-2 \alpha}  
        \left( (1\vee \alpha^2) |t'-t|^{2 H'} +  \big(t^{2H}\vee t^{2H'}\big) \, (\log^2(t)+1)\,  |H-H'|^2 \right) .
\end{align*}
\end{prop}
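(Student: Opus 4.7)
The plan is to reduce the statement for $\mathbb{B}$ to the already proven bound for $B$ in Proposition~\ref{prop:ub-justfBm} by absorbing the smooth deterministic factor $(1+t)^{-\alpha}$ via a telescoping argument. Concretely, I would write
\[
\mathbb{B}_t^H - \mathbb{B}_{t'}^{H'} = (1+t)^{-\alpha}\bigl(B_t^H - B_{t'}^{H'}\bigr) + \bigl((1+t)^{-\alpha} - (1+t')^{-\alpha}\bigr)\, B_{t'}^{H'},
\]
so that by $(a+b)^2\le 2a^2+2b^2$,
\[
\EE\bigl(\mathbb{B}_t^H - \mathbb{B}_{t'}^{H'}\bigr)^2 \le 2(1+t)^{-2\alpha}\, \EE\bigl(B_t^H - B_{t'}^{H'}\bigr)^2 + 2\bigl((1+t)^{-\alpha} - (1+t')^{-\alpha}\bigr)^2 \, t'^{2H'}.
\]
The first summand is immediately controlled by Proposition~\ref{prop:ub-justfBm}, producing exactly the advertised right-hand side.

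The whole difficulty therefore lies in showing that the deterministic remainder satisfies
\[
\bigl((1+t)^{-\alpha} - (1+t')^{-\alpha}\bigr)^2 \, t'^{2H'} \le C\,(1+t)^{-2\alpha}\,|t'-t|^{2H'}.
\]
I would split according to whether $t'-t$ is small or large compared to $1+t$. If $t'-t\le 1+t$, the mean value theorem applied to $s\mapsto (1+s)^{-\alpha}$ gives $|(1+t)^{-\alpha}-(1+t')^{-\alpha}|\le \alpha(1+t)^{-\alpha-1}(t'-t)$, while $t'\le 2(1+t)$ yields $t'^{2H'}\le C(1+t)^{2H'}$; the resulting factor $(1+t)^{-2+2H'}(t'-t)^2$ is bounded by $(t'-t)^{2H'}$ because $1-H'>0$ and $t'-t\le 1+t$. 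If instead $t'-t>1+t$, the crude bound $|(1+t)^{-\alpha}-(1+t')^{-\alpha}|\le (1+t)^{-\alpha}$ combined with $t'<2(t'-t)$ gives the claim directly.

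I expect no serious obstacle: all variance estimates for $B$ have already been obtained, and the remaining work is the deterministic case analysis above, which is elementary. The only point requiring care is making sure the interpolation exponent that converts $(t'-t)^2$ into $(t'-t)^{2H'}$ really uses $H'\in(0,1)$ and not $H$, since the placement of $H'$ versus $H$ in the target inequality comes precisely from having chosen $B_{t'}^{H'}$ (rather than $B_t^H$) as the factor multiplying the deterministic difference.
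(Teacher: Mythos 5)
Your proof is correct and follows essentially the same route as the paper: reduce to Proposition~\ref{prop:ub-justfBm} and control the deterministic remainder $\bigl((1+t)^{-\alpha}-(1+t')^{-\alpha}\bigr)^2$ times a power by the same two-case argument (mean value theorem when $t'-t\le 1+t$, crude bound otherwise) that the paper isolates as inequality \eqref{eq:tinc}. The only cosmetic difference is that you attach the prefactor difference to $B_{t'}^{H'}$ in a single decomposition, whereas the paper pivots through $\mathbb{B}_t^{H'}$ and attaches it to $B_t^{H'}$; both lead to the same estimate.
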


\begin{proof}
First we use the pivot term $\mathbb{B}_{t}^{H'}$ to get
\begin{align}\label{eq:VarIncX}
\mathbb{E}\left( \mathbb{B}_t^H - \mathbb{B}_{t'}^{H'} \right)^2 &\leq 2 \mathbb{E}\left( \mathbb{B}_t^H - \mathbb{B}_{t}^{H'} \right)^2 + 2 \mathbb{E}\left( \mathbb{B}_{t}^{H'} - \mathbb{B}_{t'}^{H'} \right)^2 \nonumber\\
&= 2 (1+t)^{-2\alpha}\, \mathbb{E}\left( B_t^H - B_{t}^{H'} \right)^2 + 2 \mathbb{E}\left( \mathbb{B}_{t}^{H'} - \mathbb{B}_{t'}^{H'} \right)^2 .
\end{align}
For the first term in the right-hand side of the previous inequality, use Proposition \ref{prop:ub-justfBm}.\\
For the second term, introduce the pivot term $(1+t')^{-\alpha} B_{t}^{H'}$ to obtain
\begin{align*}
\mathbb{E}\left( \mathbb{B}_t^{H'} - \mathbb{B}_{t'}^{H'} \right)^2  &\leq 2 \left((1+t)^{-\alpha} - (1+t')^{-\alpha}\right)^2\, \EE (B_{t}^{H'})^2 + 2 (1+t)^{-2\alpha}\, \EE \left(B_{t}^{H'} - B_{t'}^{H'}\right)^2 \\
&= 2 \left((1+t)^{-\alpha} - (1+t')^{-\alpha}\right)^2\, t^{2H'} + 2 (1+t)^{-2\alpha}\, |t-t'|^{2H'} .
\end{align*}
If $0\leq t'-t <1+t$, we use the inequality $(1+t)^{-\alpha} - (1+t')^{-\alpha} \leq \alpha\frac{t'-t}{(1+t)^{\alpha+1}}$ to get that for any $h\in [0,1]$,
\begin{align*}
\left((1+t)^{-\alpha} - (1+t')^{-\alpha}\right)^2\, (1+t)^{2h} &\leq \alpha^2 (t'-t)^{2h} \frac{(t'-t)^{2-2h} (1+t)^{2h}}{(1+t)^{2\alpha+2}}\\
&\leq \alpha^2 \frac{(t'-t)^{2h}}{(1+t)^{2\alpha}} .
\end{align*}
Now if $t'-t\geq 1+t$, 
\begin{align*}
\left((1+t)^{-\alpha} - (1+t')^{-\alpha}\right)^2\, (1+t)^{2h} 
&\leq   (1+t)^{-2\alpha} \, (1+t)^{2h}\\
&\leq (1+t)^{-2\alpha} \, (t'-t)^{2h} .
\end{align*}
Hence for any $t'\geq t\geq 0$,
\begin{align}\label{eq:tinc}
\left((1+t)^{-\alpha} - (1+t')^{-\alpha}\right)^2\, t^{2h}\leq \left((1+t)^{-\alpha} - (1+t')^{-\alpha}\right)^2\, (1+t)^{2h} \leq (1\vee \alpha^2) \frac{(t'-t)^{2h}}{(1+t)^{2\alpha}}.
\end{align}
Thus applying \eqref{eq:tinc} to $h=H'$, we get $\mathbb{E}\left( \mathbb{B}_t^{H'} - \mathbb{B}_{t'}^{H'} \right)^2  \leq  2(1+1\vee \alpha^2) (t'-t)^{2H'} (1+t)^{-2\alpha}$, and plugging this inequality in \eqref{eq:VarIncX} gives the result.
\end{proof}

\subsection{Rectangular increments}

For rectangular increments, we obtain results that are similar to Propositions \ref{prop:ub-justfBm} and \ref{prop:variance-ub}.
Recall that these increments are defined in \eqref{eq:rect-increments}. The proofs are postponed to Appendices \ref{app:proof-ub-justfBm-rectangular} and \ref{app:proof-variance-ub-rectangular}.

\begin{prop}\label{prop:ub-justfBm-rectangular}
Let $\mathcal{H}$ be a compact subset of $(0,1)$. There exists a constant $C$ such that for all $t'\ge t \ge 0$ and $H,H' \in \mathcal{H}$,
\begin{align*}
\mathbb{E}\left( \Box_{(t,H)}^{(t',H')} \fBm \right)^2 & \leq C \left( |t'-t|^{2H} \vee |t'-t|^{2H'} \right) (\log^2(|t'-t|)+1)\,   
        |H-H'|^2 .
\end{align*}
\end{prop}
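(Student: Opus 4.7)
My plan is to reduce the variance of the rectangular increment to the variance of a pure Hurst-difference increment at the single time $\tau := t'-t$, and then invoke Proposition \ref{prop:ub-justfBm}. The key observation is the distributional identity
\begin{equation*}
\Box_{(t,H)}^{(t',H')} B \stackrel{d}{=} B_\tau^{H'} - B_\tau^{H},
\end{equation*}
which follows from the translation-invariance of the underlying white noise in the Mandelbrot-Van Ness representation.

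To establish this identity, I would write each time-increment with a common Wiener process $W$:
\begin{equation*}
B_{t'}^H - B_t^H = \int_{\R} G(H,t,t',s)\, dW_s, \qquad G(H,t,t',s) := \frac{(t'-s)_+^{H-1/2} - (t-s)_+^{H-1/2}}{\Gamma(H+1/2)},
\end{equation*}
so that
\begin{equation*}
\Box_{(t,H)}^{(t',H')} B = -\int_\R \left[ G(H,t,t',s) - G(H',t,t',s) \right] dW_s .
\end{equation*}
The change of variable $u = s - t$ in the integrand yields $G(H,t,t',s) = G(H,0,\tau,s-t)$, which is precisely the MVN kernel producing $B_\tau^H$. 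Since $(W_{u+t}-W_t)_{u\in\R}$ has the same law as $W$, the claimed distributional identity follows. Equivalently, one can justify it by verifying that the centred Gaussian vectors $(B_{t'}^H - B_t^H,\, B_{t'}^{H'} - B_t^{H'})$ and $(B_\tau^H, B_\tau^{H'})$ share the same covariance matrix, via the same change of variables inside the Wiener isometry.

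Once this identity is established, squaring and taking expectations yields
\begin{equation*}
\EE \left( \Box_{(t,H)}^{(t',H')} B \right)^2 = \EE \left( B_\tau^H - B_\tau^{H'} \right)^2 .
\end{equation*}
The right-hand side is bounded by the estimate obtained in the proof of Proposition~\ref{prop:ub-justfBm}: specialising the argument of that proposition (equivalently, inserting $t' = t = \tau$ inside Steps~1--3, which removes the contribution $C|t-t'|^{2H'}$) gives
\begin{equation*}
\EE \left( B_\tau^H - B_\tau^{H'} \right)^2 \leq C\, (\tau^{2H} \vee \tau^{2H'}) (\log^2(\tau)+1) |H-H'|^2 ,
\end{equation*}
which, since $\tau = |t'-t|$, is exactly the claim. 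The only real point of care is the translation-invariance step; everything else is a direct application of Proposition \ref{prop:ub-justfBm}. I expect no substantive obstacle beyond a careful verification of this reduction, since a fully hands-on alternative (splitting $\int_\R[G(H,\cdot)-G(H',\cdot)]^2 ds$ over the three regions $s\le t$, $t<s\le t'$, $s>t'$, and then using the mean value theorem in $H$ together with Step~1 and Step~2 of the proof of Proposition~\ref{prop:ub-justfBm}) would reproduce the same bound, but less cleanly.
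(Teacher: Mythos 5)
Your proposal is correct and follows essentially the same route as the paper: the appendix proof establishes exactly the distributional identity $\Box_{(t,H)}^{(t',H')}B \overset{d}{=} B_{t'-t}^{H'}-B_{t'-t}^{H}$ (via the joint stationarity of increments, checked on the covariance through the change of variables $v=u-s$ in the Mandelbrot--Van Ness kernels) and then invokes Proposition \ref{prop:ub-justfBm}. Note that you do not even need to re-enter Steps 1--3 of that proof: setting $t=t'=\tau$ directly in the \emph{statement} of Proposition \ref{prop:ub-justfBm} already kills the $|t-t'|^{2H'}$ term and yields the claimed bound.
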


Similarly to Proposition \ref{prop:variance-ub}, we now give an upper bound on the variance of the rectangular increments of the process $\mathbb{B}$. 
\begin{prop} \label{prop:variance-ub-rectangular}
Let $\mathcal{H}$ be a compact subset of $(0,1)$. For $\alpha \ge 0$, consider the multiparameter process $\mathbb{B}$ defined in \eqref{eq:def-X-process}. There exists a constant $C$ such that
 for all $t'\ge t\ge 0$ and $H,H' \in \mathcal{H}$,
\begin{align*}
\mathbb{E}\left( \Box_{(t,H)}^{(t',H')} \mathbb{B} \right)^2 & \leq C (1+t)^{-2 \alpha}\, |H-H'|^2
 \left( |t'-t|^{2H} \vee |t'-t|^{2H'} \right) 
 \left(1+  \log^2|t'-t| + \log^2 t \right) .
\end{align*}
\end{prop}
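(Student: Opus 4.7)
The plan is to mimic the decomposition used in the proof of Proposition \ref{prop:variance-ub}, replacing the simple increment by the rectangular one and then leveraging the two propositions already proven for the variance of the increments of $B$: Proposition \ref{prop:ub-justfBm-rectangular} for the rectangular increments and Proposition \ref{prop:ub-justfBm} (with $t' = t$) for the increments in $H$ only.

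The first step is the algebraic identity
\begin{align*}
\Box_{(t,H)}^{(t',H')} \mathbb{B}
 &= (1+t')^{-\alpha}\, \Box_{(t,H)}^{(t',H')} B
 + \bigl((1+t)^{-\alpha} - (1+t')^{-\alpha}\bigr)\,(B_t^H - B_t^{H'}) ,
\end{align*}
which one verifies by expanding both sides. By the elementary inequality $(a+b)^2 \leq 2a^2+2b^2$, the square expectation of $\Box_{(t,H)}^{(t',H')} \mathbb{B}$ is then bounded by the sum of two terms.

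For the first term, since $t' \geq t$ and $\alpha \geq 0$, one has $(1+t')^{-2\alpha} \leq (1+t)^{-2\alpha}$, and Proposition \ref{prop:ub-justfBm-rectangular} immediately yields a bound of the form $C(1+t)^{-2\alpha}(|t'-t|^{2H}\vee |t'-t|^{2H'})(\log^2|t'-t|+1)|H-H'|^2$. For the second term, Proposition \ref{prop:ub-justfBm} at $t' = t$ gives $\EE(B_t^H - B_t^{H'})^2 \leq C (t^{2H}\vee t^{2H'})(\log^2(t)+1)|H-H'|^2$, and it remains to estimate the factor $((1+t)^{-\alpha}-(1+t')^{-\alpha})^2 (t^{2H}\vee t^{2H'})$. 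Applying \eqref{eq:tinc} separately with $h=H$ and $h=H'$ and using $t^{2H}\vee t^{2H'} \leq t^{2H}+t^{2H'}$ yields
\begin{align*}
\bigl((1+t)^{-\alpha}-(1+t')^{-\alpha}\bigr)^2 (t^{2H}\vee t^{2H'})
 &\leq C (1+t)^{-2\alpha}\,(|t'-t|^{2H}\vee |t'-t|^{2H'}) .
\end{align*}

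Combining both estimates and noting that $\log^2(t)+1$ is absorbed in $1 + \log^2|t'-t| + \log^2 t$ then gives the announced bound. The main obstacle is conceptually minor: it is simply to choose the right decomposition so that the two pieces are controlled respectively by the already-established rectangular and simple $H$-increment variance bounds, and to handle the prefactor $(1+t)^{-\alpha}-(1+t')^{-\alpha}$ via \eqref{eq:tinc} without losing sharpness in the power of $|t'-t|$.
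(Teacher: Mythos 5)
Your proposal is correct and follows essentially the same route as the paper: the same algebraic decomposition $\Box_{(t,H)}^{(t',H')} \mathbb{B} = (1+t')^{-\alpha}\, \Box_{(t,H)}^{(t',H')} B + \bigl((1+t)^{-\alpha}-(1+t')^{-\alpha}\bigr)(B_t^H-B_t^{H'})$, the same appeal to Propositions \ref{prop:ub-justfBm-rectangular} and \ref{prop:ub-justfBm}, and the same use of \eqref{eq:tinc} with $h=H$ and $h=H'$ to absorb the prefactor. No gaps.
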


\section{Almost sure results on the whole half-line for the fBm}\label{sec:mainresults} 

\subsection{Statement of the results}
As mentioned in the Introduction, the joint regularity in $t$ and $H$ of the fBm can be classically obtained on a compact set $[0,T]\times \mathcal{H}$ using Garsia-Rodemich-Rumsey's (GRR) Lemma. This leads to inequality \eqref{eq:simpleInc1} where the random variable $\mathbf{C}_{T}$ depends on (and certainly grows with) $T$. However, as recalled in the paragraph before Proposition~\ref{prop:variance-ub}, this strategy fails if we let $T\to\infty$.
Based on the results of the previous section on the process $\mathbb{B}$ and a multiparameter extension of Garsia-Rodemich-Rumsey's (GRR) Lemma (Section \ref{subsec:GRR}), we obtain  joint regularity estimates in $t\in\R_{+}$ and $H\in \mathcal{H}$ for the usual increments of $B$ in Theorem \ref{thm:whole-regularity}. In particular, this first theorem permits to bound $\mathbf{C}_{T}$ by $\mathbf{C}\, T^{\max(\mathcal{H}) + 2\varepsilon}$.

\begin{theorem}\label{thm:whole-regularity}
Let $\mathcal{H}$ be a compact subset of $(0,1)$. 
Then for any $\varepsilon \in (0,1)$ and any $p\geq 1$,
 there exists a positive random variable $\mathbf{C}$ with a finite moment of order $p$ 
such that almost surely, for all $ t' \ge t \ge 0$ and all $H,H' \in \mathcal{H}$,
\begin{align*}
| \fBm_t^H - \fBm_{t'}^{H'} |  \leq \mathbf{C}\, (1+t')^{2\varepsilon \min(\mathcal{H}) + \max(\mathcal{H})}  \left( 1\wedge |t-t'|^{\min(\mathcal{H})} + |H-H'| \right)^{1-\varepsilon}. 
\end{align*}
\end{theorem}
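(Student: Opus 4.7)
The key idea is to apply a multiparameter Garsia-Rodemich-Rumsey argument not to $B$ itself but to the normalized field $\mathbb{B}_t^H=(1+t)^{-\alpha}B_t^H$ of \eqref{eq:def-X-process}, with $\alpha:=\max(\mathcal{H})+\varepsilon\min(\mathcal{H})$. The interest is that by Proposition~\ref{prop:variance-ub}, the increments of $\mathbb{B}$ are controlled uniformly in time (up to logarithmic factors), while the polynomial growth of $B$ is absorbed into the explicit prefactor $(1+t)^\alpha$. The plan is to (i) GRR on each unit cell for $\mathbb{B}$, (ii) assemble cell-wise bounds into a global almost sure estimate via Borel--Cantelli, (iii) translate back to $B$ and handle long increments separately.

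\textbf{Cellwise GRR.} Fix $p\ge 1$ large enough (depending on $\varepsilon$) so that a multiparameter GRR lemma with $\Psi(u)=u^p$ applies on $[n,n+1]\times\mathcal{H}$ with the pseudo-metric $d((t,H),(t',H'))=|t-t'|^{\min\mathcal{H}}+|H-H'|$. Using Gaussian hypercontractivity together with Proposition~\ref{prop:variance-ub} (and the bound $(1+t)^{-2\alpha}(t^{2H}\vee t^{2H'})\leq C(1+n)^{-2\varepsilon\min(\mathcal{H})}$ on the cell), one gets a random constant $\mathbf{M}_n$ satisfying
\begin{align*}
\sup_{\substack{(t,H),(t',H')\in[n,n+1]\times\mathcal{H}\\(t,H)\neq(t',H')}}\frac{|\mathbb{B}_t^H-\mathbb{B}_{t'}^{H'}|}{d((t,H),(t',H'))^{1-\varepsilon}}\leq \mathbf{M}_n,
\end{align*}
with $\EE\mathbf{M}_n^p\leq C\,(1+n)^{-\alpha p}(\log(2+n)+1)^p$. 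Summing and using Chebyshev, the random variable $\mathbf{N}:=\sup_{n\in\N}(1+n)^{\alpha-\varepsilon\min(\mathcal{H})/2}\mathbf{M}_n$ has a finite $p$-th moment as soon as $p$ is chosen large enough that $\varepsilon \min(\mathcal{H})p/2 >1$.

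\textbf{Converting back to $B$ on each cell.} For $(t,H),(t',H')\in[n,n+1]\times\mathcal{H}$ with $t\leq t'$, decompose
\begin{align*}
B_t^H - B_{t'}^{H'} = (1+t')^\alpha\bigl(\mathbb{B}_t^H-\mathbb{B}_{t'}^{H'}\bigr) + \bigl((1+t)^\alpha-(1+t')^\alpha\bigr)\mathbb{B}_t^H.
\end{align*}
The first term is bounded using the GRR estimate and the factor $(1+t')^\alpha=(1+t')^{\max(\mathcal{H})+\varepsilon\min(\mathcal{H})}$. For the second term, observe that $|(1+t)^\alpha-(1+t')^\alpha|\leq C(1+t')^{\alpha-1}|t-t'|\leq C(1+t')^\alpha(|t-t'|^{\min\mathcal{H}})^{1-\varepsilon}$, and bound $|\mathbb{B}_t^H|$ by applying the cellwise bound starting from $(0,H)$ and using $|B_0^H|=0$, which gives $|\mathbb{B}_t^H|\leq\mathbf{N}\,(1+t)^{-\alpha+\varepsilon\min(\mathcal{H})/2}\,(\text{log factor})$. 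The extra $\varepsilon\min(\mathcal{H})$ in the exponent of $(1+t')$ in the target bound absorbs these growth factors, yielding the claim for $|t-t'|\leq 1$ and $(t,H),(t',H')$ in the same cell.

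\textbf{Long and cross-cell increments; main obstacle.} For $(t,H),(t',H')$ in different cells but $|t-t'|\leq 1$, telescope through the endpoints of the intermediate cell boundaries (at most two) and apply the previous estimate to each piece. For $|t-t'|>1$ (where $1\wedge|t-t'|^{\min\mathcal{H}}=1$), use $|B_t^H-B_{t'}^{H'}|\leq|B_t^H|+|B_{t'}^{H'}|$ bounded by the pointwise estimate derived in the previous paragraph; both are dominated by $\mathbf{C}(1+t')^{\max(\mathcal{H})+\varepsilon\min(\mathcal{H})}$, which fits inside the claimed bound since $|H-H'|^{1-\varepsilon}$ or $1$ appears on the right. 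The main obstacle will be the careful bookkeeping of logarithmic factors produced by Proposition~\ref{prop:variance-ub} against the exponent $\varepsilon\min(\mathcal{H})$: one must verify that the extra $2\varepsilon\min(\mathcal{H})$ in the prefactor $(1+t')^{2\varepsilon\min(\mathcal{H})+\max(\mathcal{H})}$ is sufficient to absorb both the log factor coming from the variance estimate \emph{and} the summability cost needed to turn the family $(\mathbf{M}_n)_{n\ge 0}$ into a single $L^p$ random variable.
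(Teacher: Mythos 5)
Your overall architecture (normalise by $(1+t)^{-\alpha}$, control $\mathbb{B}$ by GRR, convert back) matches the paper's, but the implementation is genuinely different: the paper applies the GRR lemma \emph{once} on all of $[0,T]\times\mathcal{H}$ with the hybrid pseudo-metric $|t-t'|^{a}\wedge|t-t'|^{\eta}+|H-H'|$ (the $|t-t'|^{\eta}$ branch is precisely what makes the double integral converge at large time separations), shows the resulting integral is bounded uniformly in $T$, and lets $T\to\infty$ by monotone convergence; you instead run GRR cell by cell on $[n,n+1]\times\mathcal{H}$ and recombine by a weighted supremum over $n$. That localisation is a legitimate alternative and avoids the $\wedge\,|t-t'|^{\eta}$ device, but it puts all the weight of the argument on the decay rate of $\EE\mathbf{M}_n^p$, and this is where your proof breaks.

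The claimed bound $\EE\mathbf{M}_n^p\leq C(1+n)^{-\alpha p}(\log(2+n)+1)^p$ is false. On the cell $[n,n+1]\times\mathcal{H}$, Proposition \ref{prop:variance-ub} gives $\EE(\mathbb{B}_t^H-\mathbb{B}_{t'}^{H'})^2\leq C(1+n)^{-2\varepsilon\min(\mathcal{H})}(\log^2(2+n)+1)\left(|t-t'|^{2\min(\mathcal{H})}+|H-H'|^2\right)$ and no better: the Hurst-increment part of the variance carries the factor $(1+t)^{-2\alpha}\,t^{2H}\vee t^{2H'}\asymp(1+n)^{-2\varepsilon\min(\mathcal{H})}$, so the normalisation only buys $(1+n)^{-2\varepsilon\min(\mathcal{H})}$, not $(1+n)^{-2\alpha}$. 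Consequently $\EE\mathbf{M}_n^p\lesssim(1+n)^{-\varepsilon\min(\mathcal{H})p}(\log(2+n)+1)^p$ is the correct rate, and with your weight the series $\sum_n(1+n)^{(\alpha-\varepsilon\min(\mathcal{H})/2)p}\,\EE\mathbf{M}_n^p\gtrsim\sum_n(1+n)^{(\max(\mathcal{H})+\varepsilon\min(\mathcal{H})/2)p}$ diverges, so your $\mathbf{N}$ does not have a finite $p$-th moment. The same overreach appears in the pointwise estimate $|\mathbb{B}_t^H|\leq\mathbf{N}(1+t)^{-\alpha+\varepsilon\min(\mathcal{H})/2}$: since $\EE|\mathbb{B}_t^{H}|^2=(1+t)^{-2\alpha}t^{2H}\asymp t^{-2\varepsilon\min(\mathcal{H})}$ for $H=\max(\mathcal{H})$, this would force $|B_t^{\max(\mathcal{H})}|\lesssim t^{\varepsilon\min(\mathcal{H})/2}$, contradicting the law of the iterated logarithm; moreover $(0,H)$ does not lie in the cell $[n,n+1]\times\mathcal{H}$, so the cellwise increment bound cannot be applied "starting from $(0,H)$" without telescoping or a separate control of the values $\mathbb{B}_n^{H_0}$. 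The good news is that the scheme is repairable: with the correct rate $\EE\mathbf{M}_n^p\lesssim(1+n)^{-\varepsilon\min(\mathcal{H})p}(\log)^p$ you may set $\mathbf{N}:=\sup_n(1+n)^{\varepsilon\min(\mathcal{H})/2}\mathbf{M}_n$ (finite $p$-th moment once $\varepsilon\min(\mathcal{H})p/2>1$ with room for the logarithm), control $\sup_{[n,n+1]\times\mathcal{H}}|\mathbb{B}|$ by adding the Gaussian values $\mathbb{B}_n^{H_0}$, and the conversion then yields a prefactor $(1+t')^{\max(\mathcal{H})+\varepsilon\min(\mathcal{H})/2}$, within the statement's $(1+t')^{2\varepsilon\min(\mathcal{H})+\max(\mathcal{H})}$. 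As written, however, the two quantitative claims above do not hold and the proof does not go through.
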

The proof of Theorem \ref{thm:whole-regularity} is given in Section \ref{subsec:ProofTh1}. 

\begin{remark}\label{rk:optimality}
In the previous theorem, the rates are close to optimality up to the power $\varepsilon$:
\begin{itemize}
\item For $H=H'$, the growth as $t'\to\infty$ is found to be or order $(t')^{2\varepsilon + H}$, while the exact (asymptotic) rate given by the law of the iterated logarithm is $(t')^H \sqrt{2\log\log (t')}$, see \cite{Orey}.
\item For $H=H'$ again, looking at the local oscillations as $t$ and $t'$ are close, the H\"older regularity is also close to optimality, as the local variations of $B^H$ are known to be, asymptotically as $|t'-t|\to 0$, of order $|t'-t|^H \sqrt{2\big|\log |t'-t|\big|}$, see \cite[Eq. (7.6)]{LiShao}.
\item For $t=t'$, the oscillations in the Hurst parameter in the previous theorem are found to be of order $|H-H'|^{1-\varepsilon}$, which is close to optimality up to the power $\varepsilon$. Indeed, it was proved in \cite{KochNeuenkirch} that for each $t$, there exists  a random variable $\tilde{\mathbf{C}}$ such that $|B_{t}^H-B_{t}^{H'}|\leq \tilde{\mathbf{C}} |H-H'|$ a.s.
\end{itemize}
In the first two situations described above, the rates from \cite{Orey,LiShao} are asymptotic, while Theorem~\ref{thm:whole-regularity} holds for any $t'\geq t\geq 0$, so we might not expect better rates. The dependence in $H$ might be shown to be Lipschitz, but the techniques used in \cite{KochNeuenkirch} do not yield bounds that are uniform in $t\in \R_{+}$.
\end{remark}

\begin{remark}\label{rk:regB-}
Theorem \ref{thm:whole-regularity} still holds in $\mathbb{R}_-$: under the assumptions of Theorem \ref{thm:whole-regularity}, we have for all $t' \leq t \leq 0$ and $H,H' \in \mathcal{H}$ that
\begin{align*}
| \fBm_t^H - \fBm_{t'}^{H'} |  \leq \mathbf{C}\, (1+|t'|)^{2 \varepsilon \min(\mathcal{H}) + \max(\mathcal{H})}  \left( 1 \wedge |t-t'|^{\min(\mathcal{H})} + |H-H'| \right)^{1-\varepsilon}.
\end{align*}
This will be used in Lemma \ref{lem:Ubar-increments}.
\end{remark}
Let us focus now on the rectangular increments of $B$ defined in \eqref{eq:rect-increments}. As for the simple increments, we are not aware of any joint regularity results on a non-compact set. Hence similarly to the previous theorem, Theorem~\ref{thm:regularityGarsia-rectangular} below states the regularity both in time and $H$ for the rectangular increments. Unlike the previous theorem, observe that it suffices that only one of the quantities $|t-t'|$ or $|H-H'|$ vanishes for the rectangular increments to vanish as well.
\begin{theorem}\label{thm:regularityGarsia-rectangular}
Let $\mathcal{H}$ be a compact subset of $(0,1)$. Then for any $\varepsilon \in (0,1)$ and  any $p\geq 1$, there exists a positive random variable $\mathbf{C}$ with a finite moment of order $p$ such that almost surely, for all $t' \ge t \ge 0$ and all $H,H' \in \mathcal{H}$,
\begin{align*} 
| \Box_{(t,H)}^{(t',H')} \fBm | \leq \mathbf{C} \,  (1+t')^{2 \varepsilon \min(\mathcal{H})  + \max(\mathcal{H})} \, \left(|H-H'|~ \big( 1  \wedge |t-t'|^{\min(\mathcal{H})} \big) \right)^{1-\varepsilon} .
\end{align*}
\end{theorem}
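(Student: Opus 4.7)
The strategy I would adopt mirrors the proof of Theorem \ref{thm:whole-regularity}: pick a scaling exponent $\alpha := \max(\mathcal{H}) + 2\varepsilon\min(\mathcal{H})$, work with the auxiliary process $\mathbb{B}$ from \eqref{eq:def-X-process}, establish the result for its rectangular increments on each unit-length time strip, glue the strips by a Borel--Cantelli argument, and then transfer back to $\fBm$ using an algebraic identity. The essential analytic input is Proposition \ref{prop:variance-ub-rectangular}, which, after invoking Gaussian hypercontractivity to promote the $L^2$-bound to an $L^p$-bound, yields for every $p$ and every small $\varepsilon' > 0$
\begin{align*}
\bigl\| \Box_{(t,H)}^{(t',H')} \mathbb{B} \bigr\|_p^p \leq C\, (1+t)^{-\alpha p}\, (1+t)^{p\varepsilon'}\, \bigl( |t-t'|^{\min(\mathcal{H})} |H-H'| \bigr)^{p(1-\varepsilon')},
\end{align*}
the logarithms being absorbed into the $(1+t)^{p\varepsilon'}$ factor.

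On the strip $I_n := [n,n+1] \times \mathcal{H}$ I would apply a multiparameter Garsia--Rodemich--Rumsey lemma adapted to \emph{rectangular} increments (a two-parameter Khoshnevisan-type version, to be stated in Section~\ref{subsec:GRR}) with test function and gauge chosen to match the above moment bound. Integrating this bound over $I_n \times I_n$ produces a positive random variable $\mathbf{C}_n$ with $\mathbb{E} \mathbf{C}_n^p \leq C(1+n)^{-\alpha p + p\varepsilon'}$ and, almost surely,
\begin{align*}
\sup_{(t,H),(t',H') \in I_n} \frac{|\Box_{(t,H)}^{(t',H')} \mathbb{B}|}{(|t-t'|^{\min(\mathcal{H})}\, |H-H'|)^{1-\varepsilon}} \leq \mathbf{C}_n .
\end{align*}
Choosing $\varepsilon' < \varepsilon \min(\mathcal{H})$, the variable $\mathbf{C}_\star := \sup_{n \in \N} (1+n)^{\alpha - \varepsilon\min(\mathcal{H})}\,\mathbf{C}_n$ has finite $p$-th moments by Borel--Cantelli, yielding a global bound of the desired shape for $\Box \mathbb{B}$ whenever $(t,H),(t',H')$ lie in a common strip; increments across distinct strips are handled by telescoping at integer times, or, if $|t-t'| \geq 1$, by bounding the four corners directly via Theorem \ref{thm:whole-regularity} and using $1 \wedge |t-t'|^{\min(\mathcal{H})} = 1$.

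The final step is the transfer from $\mathbb{B}$ back to $\fBm$. Writing $D_s := \fBm_s^H - \fBm_s^{H'}$, a direct computation gives the identity
\begin{align*}
\Box_{(t,H)}^{(t',H')} \fBm = (1+t)^{\alpha}\, \Box_{(t,H)}^{(t',H')} \mathbb{B} + \bigl( (1+t)^{\alpha} - (1+t')^{\alpha} \bigr)\, (1+t')^{-\alpha}\, D_{t'} .
\end{align*}
The first term inherits the bound from the previous step, the prefactor $(1+t)^{\alpha}$ being absorbed into $(1+t')^{\max(\mathcal{H}) + 2\varepsilon\min(\mathcal{H})}$. For the second term, the mean-value inequality gives $|(1+t)^{\alpha} - (1+t')^{\alpha}| \leq C(1+t')^{\alpha - 1}|t-t'|$ when $|t-t'| \leq 1+t$, while Theorem \ref{thm:whole-regularity} applied at the single time $t'$ produces $|D_{t'}| \leq \mathbf{C}\,(1+t')^{\max(\mathcal{H}) + 2\varepsilon\min(\mathcal{H})} |H-H'|^{1-\varepsilon}$; since $|t-t'| \leq |t-t'|^{\min(\mathcal{H})(1-\varepsilon)}$ whenever $|t-t'| \leq 1$, the combination delivers the required product structure.

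The main obstacle I anticipate is precisely this transfer: rectangular increments do not factor cleanly through the time-dependent weight $(1+t)^{-\alpha}$, so one cannot simply rescale the bound obtained for $\mathbb{B}$, and the correction term must be controlled by calling upon the \emph{simple}-increment result of Theorem \ref{thm:whole-regularity}. A secondary technical point is the calibration of the exponents so that the logarithmic factors in Proposition \ref{prop:variance-ub-rectangular}, the boundary losses in the strip-gluing step, and the $(1+t)^{\alpha-1}|t-t'|$ correction all fit inside the single polynomial prefactor $(1+t')^{2\varepsilon\min(\mathcal{H}) + \max(\mathcal{H})}$ and a single loss of $\varepsilon$ in the H\"older exponent, which is what forces the somewhat asymmetric exponents in the statement.
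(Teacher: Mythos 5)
Your proposal is correct and follows essentially the same route as the paper: the weighted process $\mathbb{B}$, the moment bound of Proposition \ref{prop:variance-ub-rectangular}, a multiparameter GRR lemma adapted to rectangular increments (the paper uses Lemma \ref{lem:GRRrect}, borrowed from \cite{hu2013multiparameter}), and a transfer back to $\fBm$ via the identity $(1+t')^{-\alpha}\,\Box_{(t,H)}^{(t',H')}\fBm = \Box_{(t,H)}^{(t',H')}\mathbb{B} + \bigl((1+t')^{-\alpha}-(1+t)^{-\alpha}\bigr)\bigl(\fBm_t^{H}-\fBm_t^{H'}\bigr)$ combined with \eqref{eq:tinc} and Theorem \ref{thm:whole-regularity}, exactly the obstacle you identified. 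The only real difference is how the GRR estimate is globalized: you glue unit time-strips by a Borel--Cantelli argument, whereas the paper integrates directly over $[0,T]^2\times\mathcal{H}^2$ and lets $T\to\infty$ by monotone convergence, the decay $(1+t)^{-\alpha}$ with $\alpha>\max(\mathcal{H})+\frac{1}{p}$ making the integral converge uniformly in $T$.
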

The proof is given in Appendix \ref{subsec:ProofTh2}. It follows the same argument as in the simple increments case and is based on computations of the moments of $\mathbb{B}$ (see Proposition \ref{prop:ub-justfBm-rectangular}) and a multiparameter GRR lemma which is borrowed from \cite{hu2013multiparameter}. The same comments about optimality of the exponents as in Remark~\ref{rk:optimality} can be formulated here as well.

\smallskip

Finally, as another consequence of Proposition \ref{prop:ub-justfBm-rectangular}, we can also state a result in expectation, uniformly in $H\in \mathcal{H}$. This is a result for all $t$ and $t'$ fixed outside the expectation, so the upper bound does not grow in $(1+t')^{2 \varepsilon + \max(\mathcal{H})}$ as in the previous results.

\begin{proposition}\label{cor:supH-bound}
Let $\mathcal{H}$ be a compact subset of $(0,1)$ and let $q>0$. There exists a constant $C$ such that for all $t',t \ge 0$ and all $H \in \mathcal{H}$, 
\begin{align*}
\mathbb{E} \left( \sup_{H \in \mathcal{H}} | B_t^H - B_{t'}^{H} |^q \right) \leq C \left( |t-t'|^{ q \min(\mathcal{H})} \vee |t-t'|^{ q \max(\mathcal{H})} \right) [\log^q(|t'-t|)+1] .
\end{align*}
\end{proposition}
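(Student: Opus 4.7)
The plan is to decompose the simple increment $B_t^H-B_{t'}^H$ around a reference Hurst parameter $H_0 \in \mathcal{H}$ (say $H_0 = \min(\mathcal{H})$), thereby reducing the supremum over $H$ to the supremum of a centred Gaussian process indexed only by $H$ for fixed $t,t'$. By the very definition of the rectangular increment,
\begin{equation*}
B_t^H - B_{t'}^H = \bigl(B_t^{H_0} - B_{t'}^{H_0}\bigr) + \Box_{(t,H_0)}^{(t',H)}B .
\end{equation*}
Taking the supremum over $H \in \mathcal{H}$ and then the $L^q$-norm yields
\begin{equation*}
\mathbb{E}\Bigl(\sup_{H \in \mathcal{H}} |B_t^H - B_{t'}^H|^q\Bigr) \leq C\, \mathbb{E}|B_t^{H_0} - B_{t'}^{H_0}|^q + C\, \mathbb{E}\Bigl(\sup_{H \in \mathcal{H}} |\Box_{(t,H_0)}^{(t',H)}B|^q\Bigr).
\end{equation*}

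The first term is a standard Gaussian moment equal to $C_q|t-t'|^{qH_0}=C_q|t-t'|^{q\min(\mathcal{H})}$, which is dominated by the right-hand side of the claim (the logarithmic factor $\log^q|t-t'|+1$ is bounded below by $1$). For the second term, I would view $X_H := \Box_{(t,H_0)}^{(t',H)}B$ as a centred Gaussian process on the compact interval $\mathcal{H}$, vanishing at $H=H_0$. A direct computation using bilinearity gives $X_{H_1} - X_{H_2} = -\Box_{(t,H_2)}^{(t',H_1)}B$, so Proposition \ref{prop:ub-justfBm-rectangular} yields, uniformly in $H_1,H_2 \in \mathcal{H}$,
\begin{equation*}
\mathbb{E}|X_{H_1} - X_{H_2}|^2 \leq C\,\sigma^2(t,t')\,|H_1-H_2|^2,
\end{equation*}
where $\sigma^2(t,t') := \bigl(|t-t'|^{2\min(\mathcal{H})}\vee|t-t'|^{2\max(\mathcal{H})}\bigr)(\log^2|t-t'|+1)$.

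Applying a one-parameter GRR lemma to $X$ on the bounded interval $\mathcal{H}$ (or, equivalently, Dudley's entropy bound combined with Borell--TIS concentration to pass from first moment to arbitrary $L^q$-norm), and using that $X$ vanishes at $H_0$, I obtain for every $q>0$,
\begin{equation*}
\mathbb{E}\Bigl(\sup_{H \in \mathcal{H}} |X_H|^q\Bigr) \leq C\,\sigma^q(t,t')\,|\mathcal{H}|^q \leq C\,\sigma^q(t,t').
\end{equation*}
Combining both bounds and substituting the definition of $\sigma(t,t')$ gives the claim. The main technical point is invoking a Gaussian-supremum estimate suited to the Lipschitz pseudo-metric $\sigma(t,t')|H-H'|$; everything else reduces to the bilinearity identity for $\Box$ and to the rectangular-increment bound of Proposition \ref{prop:ub-justfBm-rectangular}, so no new difficulty arises.
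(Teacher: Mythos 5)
Your proposal is correct and follows essentially the same route as the paper: the paper applies the classical one-parameter GRR lemma to $\varphi(H):=B_t^H-B_{t'}^H$, observes that $\varphi(h)-\varphi(h')$ is exactly the rectangular increment so that Proposition \ref{prop:ub-justfBm-rectangular} controls its variance by $\sigma^2(t,t')|h-h'|^2$, and then recovers $\sup_H|\varphi(H)|$ by pinning down a fixed $H_0\in\mathcal{H}$ --- which is the same decomposition you perform at the outset (your sign conventions for $\Box$ are off but immaterial). The only cosmetic differences are that you offer Dudley plus Borell--TIS as an alternative to GRR for handling all $q>0$ directly, where the paper instead reduces to $q$ large.
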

The proof of Proposition \ref{cor:supH-bound} is given in Section \ref{subsec:proofPropSupH}. Up to the logarithmic factor which is due to the supremum in $H$, observe that the regularity in $t$ is optimal. We also note that this result is useful in the proof of \cite[Proposition 4.3]{HRstat}.

\subsection{Garsia-Rodemich-Rumsey's lemma}\label{subsec:GRR}

The Garsia-Rodemich-Rumsey lemma is usually stated for functions of one parameter \cite{GRR}.
 A more general version is proven in \cite[Lemma 2]{gubinelli2016stochastic}, from which we deduce the following lemma.
 
 \begin{lemma}\label{lem:GRR} 
Let $d\in \N^*$, $a,b \in (0,1]$ and $c \in [0,a \wedge b]$. Define $\delta_{a,b,c}$ the distance on $\R \times (0,1)^d$ given for any $x=(x_{1},\dots,x_{d+1}), \, y=(y_{1},\dots,y_{d+1}) \in \R \times (0,1)^d$ by
\begin{align}\label{eq:defrho}
\delta_{a,b,c}(x,y) = |x_1-y_1|^{a} \wedge |x_1-y_1|^c+  \frac{1}{d} \sum_{i=2}^{d+1} | x_i - y_i|^b .
\end{align}
Let $f:\R \times (0,1)^d \to \R$ be a continuous function (for this metric) and $K$ be a compact subset of $\R \times (0,1)^d$. For any $p>0$ satisfying $p > 2 (a\wedge b)^{-1} (d+1)$, there exists a constant $C>0$ that depends only on $p$, $a$ and $b$, such that
\begin{align*} 
 \sup_{\substack{ x, y \in K\\ x \neq y} } \frac{| f(x)-f(y) | }{\delta_{a,b,c}(x,y)^{1-\frac{2(d+1)}{ (a \wedge b)p}} }  \leq C \left( \int_{K} \int_{K} \frac{| f(z)-f(z') |^p}{\delta_{a,b,c}(z,z')^{ p}} \diff z \diff z'  \right)^{\frac{1}{p}} .
\end{align*}
 \end{lemma}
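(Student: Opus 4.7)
The plan is to deduce this from the abstract multiparameter Garsia-Rodemich-Rumsey estimate of Gubinelli and Lejay \cite[Lemma~2]{gubinelli2016stochastic}, which applies to continuous functions on a metric measure space and produces a Hölder-type bound whose exponent is determined by the local volume growth of balls. Once $\delta_{a,b,c}$ is verified to be a pseudo-metric and the volume of its small balls is estimated, the claimed exponent $1 - 2(d+1)/((a\wedge b)p)$ emerges directly from the standard computation of an integral.

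First I would check that $\delta_{a,b,c}$ is a pseudo-metric on $\R \times (0,1)^d$. The sum $\frac{1}{d}\sum_{i=2}^{d+1}|x_i-y_i|^b$ is subadditive in each coordinate because $u\mapsto u^b$ is concave on $\R_+$. For the first coordinate, since $c \le a \le 1$, the map $\varphi(u) = u^a \wedge u^c$ coincides with $u^a$ on $[0,1]$ and with $u^c$ on $[1,\infty)$; each piece is concave, and the left derivative $a$ at $u=1$ dominates the right derivative $c$, so $\varphi$ is concave on all of $\R_+$ and hence subadditive (concave functions vanishing at the origin are subadditive).

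Next I would estimate the Lebesgue measure of a small $\delta_{a,b,c}$-ball. For $x \in \R\times (0,1)^d$ and $r\in(0,1]$, the inclusion $\{y_1:|x_1-y_1|\le (r/2)^{1/a}\}\times \prod_{i=2}^{d+1}\{y_i:|x_i-y_i|\le (r/2)^{1/b}\} \subseteq B_{\delta_{a,b,c}}(x,r)$ yields $\mu(B_{\delta_{a,b,c}}(x,r))\ge c_0\, r^{1/a+d/b}$, and since $\tfrac{1}{a}+\tfrac{d}{b}\le \tfrac{d+1}{a\wedge b}$ and $r\le 1$, also $\mu(B_{\delta_{a,b,c}}(x,r))\ge c_0\, r^{(d+1)/(a\wedge b)}$. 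Plugging this into the general GRR lemma, with $\Psi(u)=u^p$ and auxiliary control function $p(u)=u$, yields
\begin{align*}
|f(x)-f(y)| \le C\, U^{1/p} \int_0^{\delta_{a,b,c}(x,y)} s^{-2(d+1)/((a\wedge b)p)}\, ds,
\end{align*}
where $U$ denotes the right-hand side of the claimed inequality raised to the power $p$. Since $p > 2(d+1)/(a\wedge b)$, the integral converges and equals a multiple of $\delta_{a,b,c}(x,y)^{1-2(d+1)/((a\wedge b)p)}$, which gives the claim.

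The main obstacle lies in the careful bookkeeping when invoking the generic Gubinelli-Lejay statement with our non-homogeneous metric $\delta_{a,b,c}$: in particular, the minimum $|x_1-y_1|^a \wedge |x_1-y_1|^c$ could complicate the volume analysis, but since the GRR estimate only exploits volumes of balls of small radius (where the minimum reduces to $|x_1-y_1|^a$), the two steps above suffice and the $c$-regime plays no role in the exponent.
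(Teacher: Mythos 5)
Your overall strategy is the same as the paper's: both proofs invoke the abstract multiparameter GRR estimate of \cite[Lemma 2]{gubinelli2016stochastic} with $\psi(u)=u^p$ and reduce the problem to a lower bound on the Lebesgue measure of $\delta_{a,b,c}$-balls, after which the exponent $1-\tfrac{2(d+1)}{(a\wedge b)p}$ falls out of the integral $\int_0^{\delta} s^{-2(d+1)/((a\wedge b)p)}\,ds$. Your explicit verification that $\delta_{a,b,c}$ is a pseudo-metric (concavity of $u\mapsto u^a\wedge u^c$ using $c\le a$) is a welcome addition that the paper omits.

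However, there is a genuine gap in your final paragraph. The claim that ``the GRR estimate only exploits volumes of balls of small radius'' is not correct: the chaining integral runs over $r\in(0,4\,\delta_{a,b,c}(x,y))$, and this lemma is applied on $K=[0,T]\times\mathcal{H}$ with $T\to\infty$, where $\delta_{a,b,c}(x,y)$ behaves like $|x_1-y_1|^c$ and is unbounded. So you do need a volume lower bound of the form $\sigma(r)\gtrsim r^{(d+1)/(a\wedge b)}$ for $r>1$ as well. Your two estimates both break down there: the box inclusion uses $|x_1-y_1|^a\wedge|x_1-y_1|^c\le|x_1-y_1|^a$, which requires $|x_1-y_1|\le 1$, and the relaxation $r^{1/a+d/b}\ge r^{(d+1)/(a\wedge b)}$ reverses for $r\ge1$. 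Falling back on monotonicity ($\sigma(r)\ge\sigma(1)$ for $r\ge1$) only yields $|f(x)-f(y)|\le C\,U^{1/p}\bigl(\delta^{1-\theta}+\delta\bigr)$ with $\theta=\tfrac{2(d+1)}{(a\wedge b)p}$, which is strictly weaker than the claimed $\delta^{1-\theta}$ once $\delta\ge1$ --- precisely the regime of interest for the long-time results. This is exactly where the exponent $c$ earns its keep in the paper's proof: for $u=|x_1-y_1|\ge1$ and $v\in[0,1]$ one has $u^c+v^b\le u^c+v^c\le 2(u+v)^{a\wedge b}$ because $c\le a\wedge b$ and $u+v\ge1$, so the simplex $\{y\in K:\ (|x_1-y_1|+\tfrac1d\sum_{i\ge2}|x_i-y_i|)^{a\wedge b}\le r/2\}$ sits inside $B_{\delta_{a,b,c}}(x,r)$ for \emph{every} $r$, not just $r\le 2$. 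You need to add this large-separation case to your volume estimate; without it the conclusion is only established for pairs $x,y$ at bounded $\delta_{a,b,c}$-distance.
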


\begin{proof}
Using the notations of \cite[Lemma 2]{gubinelli2016stochastic}, we take $\psi(u)= u^p$, $d(x,y) = \delta_{a,b,c}(x,y)$ and $m$ the Lebesgue measure. We have to compute a lower bound on $\sigma(r) := \inf_{x \in K} m(B(x,r))$, where $B(x,r)$ is the $\delta_{a,b,c}$-ball centred in $x$ with radius $r$. 
By the concavity of the function $u\mapsto u^{a\wedge b}$, we have that for all $u,v\in [0,1]$,
\begin{align*}
u^a+v^b \leq u^{a\wedge b} + v^{a\wedge b} &\leq 2^{1-a\wedge b} (u+v)^{a\wedge b} \leq 2 (u+v)^{a\wedge b} .
\end{align*}
Moreover, if $u \ge 1$ and $v \in [0,1]$, then $u^c + v^b \leq u^c + v^c \leq 2 (u+v)^{a \wedge b}$. Therefore, and by using Jensen's inequality, we have
\begin{align*}
|x_1-y_1|^{a} \wedge |x_1-y_1|^c+  \frac{1}{d} \sum_{i=2}^{d+1} | x_i - y_i|^b & \leq |x_1-y_1|^{a} \wedge |x_1-y_1|^c+ \left( \frac{1}{d} \sum_{i=2}^{d+1} | x_i - y_i| \right)^b \\
& \leq 2 \left( |x_1-y_1| + \frac{1}{d} \sum_{i=2}^{d+1} | x_i-y_i |  \Big)^{a \wedge b}  \right).
\end{align*}
Thus, for any $r> 0$, the set 
$$  A:= \left\{ y \in K:~ \Big( |x_1-y_1| + \frac{1}{d} \sum_{i=2}^{d+1} | x_i-y_i |  \Big)^{a \wedge b} \leq \frac{r}{2} \right\} $$
is included in $B(x,r)$. 
Hence 
$\sigma(r) \ge c_{a,b}\,  r^{\frac{d+1}{a \wedge b}}$, for some constant $c_{a,b}>0$. Using this inequality in the bound proposed in \cite[Lemma 2]{gubinelli2016stochastic} gives the desired result. 
\end{proof}

\subsection{Proof of Theorem \ref{thm:whole-regularity}}\label{subsec:ProofTh1}
 Denote $a= \min(\mathcal{H})$, $\bar{a}=\max(\mathcal{H})$ and let $\alpha>0$, $\eta \in (0,a)$ and $p > \frac{4}{a\varepsilon} \vee \eta^{-1}$ (one can always choose $p$ as large as necessary).

For any $T\geq 1$, let $\mathcal{K}_{T} := [0,T] \times \mathcal{H}$. Recall that $\mathbb{B}_t^H = (1+t)^{-\alpha} \fBm_t^H$. The idea of the proof is to obtain an upper bound which is independent of $T$ on
\begin{align*}
A_{T} := \mathbb{E} \sup_{\substack{(t,H),\, (t',H') \in \mathcal{K}_{T}\\ (t,H)\neq (t',H')}} \  \frac{| \mathbb{B}_t^H - \mathbb{B}_{t'}^{H'} |^p}{\left(|t'-t|^\eta \wedge |t'-t|^{a} + |H-H'|\right)^{p-\frac{4}{a}}}  .
\end{align*}
Provided $A_{T}$ is bounded in $T\in \R_{+}$, we would then get by monotone convergence that 
\begin{align}\label{eq:boundasinfty}
\sup_{\substack{(t,H),\, (t',H') \in \R_{+}\times \mathcal{H} \\ (t,H)\neq (t',H')}} \  \frac{| \mathbb{B}_t^H - \mathbb{B}_{t'}^{H'} |^p}{\left(|t'-t|^\eta \wedge |t'-t|^{a} + |H-H'|\right)^{p-\frac{4}{a}}} <\infty \quad  a.s.
\end{align}
The last step of the proof will then be to deduce from \eqref{eq:boundasinfty} the result on the increments of $B$.

First, we prove \eqref{eq:boundasinfty}. A consequence of Proposition \ref{prop:variance-ub} and Kolmogorov's continuity criterion (see e.g. \cite[Th. 2.3.1]{Khosh} in the multiparameter setting) is that the process $\{\mathbb{B}^H_{t},\, t\in [0,T],\, H\in \mathcal{H}\}$ has an almost sure continuous modification. We still denote by $\mathbb{B}_{t}^H$ this modification. Here the continuity is to be understood with respect to the distance $\delta_{a,1,\eta}$, defined in \eqref{eq:defrho}. Hence by Lemma~\ref{lem:GRR} (applied here for $a=\min(\mathcal{H})$, $b=1$, $c=\eta$, $d=1$ and $p>\frac{4}{a}$), there exists $C>0$ which does not depend on $T$ and $\mathcal{H}$ such that
\begin{align*} 
A_{T} \leq C \int_0^T \int_0^T \int_{\mathcal{H}} \int_{\mathcal{H}} \frac{\mathbb{E}|(1+s)^{-\alpha} \fBm_s^h  - (1+s')^{-\alpha} \fBm_{s'}^{h'} |^p}{\left(|s-s'|^\eta \wedge |s-s'|^{a} + |h-h'|\right)^{ p}} \diff h' \diff h \diff s' \diff s .
\end{align*}
Since the random variable $(1+s)^{-\alpha} \fBm_s^h  - (1+s')^{-\alpha} \fBm_{s'}^{h'}$ is Gaussian, 
\begin{align}\label{eq:decompAT}
A_{T}  &\leq  C \int_0^T \int_0^T\int_{\mathcal{H}} \int_{\mathcal{H}} \frac{\left(\mathbb{E}|(1+s)^{-\alpha} \fBm_s^h  - (1+s')^{-\alpha}\fBm_{s'}^{h'} |^2\right)^{p/2}}{ \left(|s-s'|^\eta  \wedge |s-s'|^a + |h-h'|\right)^{ p}}  \diff h' \diff h \diff s' \diff s \nonumber \\
&=: C \left( I_{1} + I_{2} + I_{3} \right),
\end{align}
where $I_{1}$ is the integral for $s'$ between $0$ and $(s-1)\vee 0$; $I_{2}$ for $s'$ between $(s-1)\vee 0$ and $(s+1)\wedge T$; and $I_{3}$ for $s'$ between $(s+1)\wedge T$ and $T$.

\paragraph{Bound on $I_1$ and $I_{3}$.} We only write the details for $I_{1}$, as $I_{3}$ can be treated similarly. In both cases one has $|s'-s|\ge 1$, hence $|s-s'|^\eta  \wedge |s-s'|^a + |h-h'| \geq |s-s'|^\eta$.
It comes
\begin{align*}
I_{1} & \leq C \int_0^T \int_0^{(s-1)\wedge 0} \int_{\mathcal{H}} \int_{\mathcal{H}} \left(\mathbb{E}|(1+s)^{-\alpha} \fBm_s^h|^2  + \EE|(1+s')^{-\alpha}\fBm_{s'}^{h'} |^2\right)^{p/2} (s-s')^{-p\eta} \diff h' \diff h \diff s' \diff s \\
&\leq C \int_1^T \int_0^{s-1} \int_{\mathcal{H}} \int_{\mathcal{H}} \left((1+s)^{-p\alpha} s^{ph} + (1+s')^{-p\alpha} (s')^{ph'} \right) (s-s')^{-p\eta} \diff h' \diff h \diff s' \diff s .
\end{align*}
Now use that $h$ and $h'$ are smaller than $\bar{a}$ to get
\begin{align*}
I_{1} & \leq C  \int_1^T \int_0^{s-1} \left( (1+s)^{-p\alpha} s^{p \bar{a}}+ (1+s')^{-p\alpha} \left( (s')^{p \bar{a}} \vee (s')^{pa} \right) \right) (s-s')^{-p\eta} \diff s' \diff s.
\end{align*}
Using now that $p\eta>1$,
\begin{align*}
I_{1} &\leq C  \int_1^T \left(  (1+s)^{-p\alpha} s^{p \bar{a}} (1-s^{1-p\eta}) +  \int_{0}^{s-1}  (1+s')^{-p\alpha+p \bar{a}} (s-s')^{-p\eta}  \diff s' \right) \diff s .
\end{align*}
Now we have $\int_1^T (1+s)^{-p\alpha} s^{p \bar{a}} (1-s^{1-p\eta}) \diff s \leq \int_1^T  (1+s)^{p(\bar{a}-\alpha)} \diff s \leq C (1+T)^{p(\bar{a}-\alpha)+1} $ and 
$ \int_1^T \int_{0}^{s-1}  (1+s')^{-p\alpha+p \bar{a}} (s-s')^{-p\eta}  \diff s'  \diff s = \int_{0}^{T-1} (1+s')^{p(\bar{a}-\alpha)} \int_{s'+1}^T (s-s')^{-p\eta} \diff s\diff s'$. Hence 
\begin{align}\label{eq:I1}
I_{1} \leq C\, (1+T^{p(\bar{a}-\alpha)+1}).
\end{align}
Proceeding similarly, the same bound holds for $I_{3}$.

\paragraph{Bound on $I_2$.} Here we use $|s-s'|\leq 1$ and Proposition \ref{prop:variance-ub} to obtain 
\begin{align*}
& \mathbb{E}|(1+s)^{-\alpha} \fBm_s^h  - (1+s')^{-\alpha}\fBm_{s'}^{h'} |^p \\ 
& \leq C\, \left(1+ s\wedge s'\right)^{p(\bar{a}-\alpha)} \left(1+ \log^2(s\wedge s') \right)^{\frac{p}{2}}  \left( |s'-s|^{p a} +(( s\wedge s')^{2h} \vee ( s\wedge s')^{2h'})  |h-h'|^p  \right) .
\end{align*}
Hence using that $(( s\wedge s')^{2h} \vee ( s\wedge s')^{2h'})  \leq 1+(s \wedge s')^2$, we get
\begin{align}\label{eq:I2}
I_2 &\leq C \int_0^T \int_{(s-1)\vee 0}^{(s+1)\wedge T} \int_{\mathcal{H}} \int_{\mathcal{H}} \left(1+ s\wedge s'\right)^{p(\bar{a}-\alpha)}  \left(1+\log^2(s\wedge s')\right)^{\frac{p}{2}} (1+(s \wedge s')^2 ) \diff h' \diff h \diff s' \diff s \nonumber\\
& \leq C \, (T^{p(\bar{a}-\alpha) +3}  \left(1+ \log^2 T \right)^{\frac{p}{2}}+1).
\end{align}

One can now plug \eqref{eq:I1} and \eqref{eq:I2}  into \eqref{eq:decompAT}, and take $\alpha>\frac{3}{p} + \bar{a}$ to conclude that $A_{T}$ is bounded uniformly in $T\in \R_{+}$. Hence \eqref{eq:boundasinfty} holds true. 

~

Let $\mathbf{C}$ denote the random variable
\begin{equation*}
\mathbf{C} := \sup_{\substack{(t,H),\, (t',H') \in \R_{+}\times \mathcal{H} \\ (t,H)\neq (t',H')}} \  \frac{| \mathbb{B}_t^H - \mathbb{B}_{t'}^{H'} |}{\left(|t'-t|^\eta \wedge |t'-t|^{a} + |H-H'|\right)^{1-\frac{4}{ap}}},
\end{equation*}
Then $\EE|\mathbf{C}|^p \leq \sup_{T>0} A_{T} < + \infty$. Let now $t'\geq t$ and observe that
\begin{align}\label{eq:diffB}
(1+t')^{-\alpha} |\fBm_t^H-\fBm_{t'}^{H'}| \leq |(1+t')^{-\alpha} - (1+t)^{-\alpha} | |\fBm_{t}^{H}| + | (1+t)^{-\alpha} \fBm_t^H - (1+t')^{-\alpha} \fBm_{t'}^{H'} | .
\end{align}

Apply \eqref{eq:boundasinfty} to $(1+t)^{-\alpha} |\fBm_{t}^{H}|$ (with the notations of \eqref{eq:boundasinfty}, take $t'=0$ and $H'=H$) so that for any $t\geq 0$,
\begin{align*}
(1+t)^{-\alpha} |\fBm_{t}^{H}| \leq \mathbf{C} \, \left( t^\eta \wedge t^a \right)^{1-\frac{4}{ap}} .
\end{align*}
Apply again \eqref{eq:boundasinfty} to $| (1+t)^{-\alpha} \fBm_t^H - (1+t')^{-\alpha} \fBm_{t'}^{H'} | $ so that \eqref{eq:diffB} becomes
\begin{align*}
(1+t')^{-\alpha} |\fBm_t^H-\fBm_{t'}^{H'}|  &\leq \mathbf{C} \Big(|(1+t')^{-\alpha} - (1+t)^{-\alpha}| (1+t)^\alpha  \left(t^\eta \wedge t^a \right)^{1-\frac{4}{ap}} \\ 
& \quad + \left(|t'-t|^{\eta} \wedge |t'-t|^a + |H'-H|\right)^{1-\frac{4}{ap}} \Big) .
\end{align*}
Now \eqref{eq:tinc} applied twice (first with $h = \eta(1-\frac{4}{ap})$ and then with $h=a-\frac{4}{p}$) yields
\begin{align*}
(1+t')^{-\alpha} |\fBm_t^H-\fBm_{t'}^{H'}| \leq \mathbf{C} \, \left( C \left(|t'-t|^{\eta(1-\frac{4}{ap})} \wedge |t'-t|^{a-\frac{4}{p}} \right) + \left(|t'-t|^{\eta} \wedge |t'-t|^a + |H'-H|\right)^{1-\frac{4}{ap}} \right) .
\end{align*}
Since we assumed $\eta<a$, it follows that
\begin{align*}
|\fBm_t^H-\fBm_{t'}^{H'}| & \leq \mathbf{C} (1+t')^{\alpha} \left(|t'-t|^{\eta} \wedge |t'-t|^a + |H'-H|\right)^{1-\frac{4}{ap}}  \\
& \leq \mathbf{C} (1+t')^{\alpha + \eta} \left( 1 \wedge| t'-t|^a + |H'-H|\right)^{1-\frac{4}{ap}} .
\end{align*}
Besides, we have $1-\frac{4}{ap} \ge 1-\varepsilon$ and $\left( 1 \wedge| t'-t|^a + |H'-H|\right) \leq 2$,  thus by setting $\eta = \varepsilon a$ and $\alpha= \bar{a}+\varepsilon a$, we conclude that
\begin{align*}
|\fBm_t^H-\fBm_{t'}^{H'}| 
& \leq \mathbf{C} (1+t')^{2 \varepsilon a + \bar{a}}  \left( 1 \wedge|t'-t|^a + |H'-H|\right)^{1-\varepsilon} . 
\end{align*}

\subsection{Proof of Proposition \ref{cor:supH-bound}}\label{subsec:proofPropSupH}
First, notice that it suffices to prove the result for $q$ large enough. Let $t,t'\ge 0$ and denote ${\varphi(H) := B_t^H - B_{t'}^H}$. 
Apply the classical Garsia-Rodemich-Rumsey's lemma with the choice ${\psi(x)=x^q}$, $q>2$ and $p(x)=x$, with the notations of \cite{GRR}. We get
\begin{align*}
\mathbb{E} \sup_{\substack{H,H'\in \mathcal{H}\\H\neq H'}} \frac{| \varphi(H)-\varphi(H') |^q}{|H-H'|^{q-2}} & \leq C \int_{\mathcal{H}} \int_{\mathcal{H}} \frac{\mathbb{E} | \varphi(h) - \varphi(h') |^q}{|h-h'|^{q}} \diff h \diff h' \\
& \leq C \int_{\mathcal{H}} \int_{\mathcal{H}} \frac{\left( \mathbb{E} | \fBm_{t}^h-\fBm_{t'}^h - \fBm_{t}^{h'} + \fBm_{t'}^{h'}|^2 \right)^{q/2}}{|h-h'|^{q}} \diff h \diff h' . 
\end{align*}
In view of Proposition \ref{prop:ub-justfBm-rectangular}, we further obtain
\begin{align*}
\mathbb{E} \sup_{\substack{H,H'\in \mathcal{H}\\H\neq H'}} \frac{| \varphi(H)-\varphi(H') |^q}{|H-H'|^{q-2}} & \leq C \left( |t'-t|^{q\min(\mathcal{H})} \vee |t'-t|^{q\max(\mathcal{H})} \right) [\log^q(|t'-t|)+1]  .
\end{align*}
By fixing a particular $H_{0}\in \mathcal{H}$, it follows that
\begin{align*}
\mathbb{E} \sup_{H\in \mathcal{H}} | \varphi(H) |^q & \leq C \left( |t'-t|^{q\min(\mathcal{H})} \vee |t'-t|^{q\max(\mathcal{H})} \right) [\log^q(|t'-t|)+1] + C\, \mathbb{E}|\varphi(H_{0})|^q \\
& \leq  C \left( |t'-t|^{q\min(\mathcal{H})} \vee |t'-t|^{q\max(\mathcal{H})} \right) [\log^q(|t'-t|)+1] + C\, |t-t'|^{q H_{0}}.
\end{align*}

\section{Regularity of ergodic fractional SDEs}\label{sec:application}

\subsection{Regularity of the solutions}\label{sec:application-reg}
Let $B$ be an $\R^d$-valued fBm, i.e. an $\R^d$-valued process indexed by $(t,H)\in \R_{+}\times (0,1)$ with i.i.d. entries, each having the same law as \eqref{eq:fBm}. 
Consider the $\R^d$-valued SDE:
\begin{align}\label{eq:SDE}
Y^H_{t} &  = Y_{0}  + \int_{0}^t b(Y^H_{s}) \diff s + \fBm_t^H .
\end{align}

\begin{remark}\label{rk:boundedDrift}
When $b$ is a Lipschitz function, this equation has a unique solution on $\R_{+}$. If in addition $b$ is bounded, then applying Gr\"onwall's inequality and Theorem~\ref{thm:whole-regularity} gives that almost surely, for all $ t' \ge t \ge 0$ and all $H,H' \in \mathcal{H}$,
\begin{align*}
| Y_t^H - Y_{t'}^{H'} |  \leq \mathbf{C}\, (1+t')  \left( 1 \wedge |t-t'|^{\min(\mathcal{H})} + |H-H'| \right)^{1-\varepsilon} e^{Ct}. 
\end{align*}
\end{remark}

We seek to improve the previous bound when $b$ is dissipative, that is when
\begin{center}
$b \in \mathcal{C}^{1}(\mathbb{R}^d ;\mathbb{R}^d)$ and there exist constants $\kappa, K > 0$ such that for any $x,y \in \mathbb{R}^d$,
\begin{align}\label{eq:drift-coerciv}
    \langle b(x) - b(y),  x-y \rangle \leq -\kappa | x - y |^2 ~\text{  and  }~ | b(x) - b(y) | \leq K | x-y |.
\end{align} 
\end{center}
Examples of dissipative functions $b$ that satisfy \eqref{eq:drift-coerciv} include $b(x)=-\xi x$ for some $\xi>0$ (in this case $Y$ is the fractional Ornstein-Uhlenbeck process presented in the next paragraph). In general, for $f$ a $K_f$-Lipschitz function with $K_f <\xi$, the function $b(x)= - \xi x + f(x)$ satisfies \eqref{eq:drift-coerciv}. 
~~

First, we analyse the fractional Ornstein-Uhlenbeck (OU) process. Then the main result about $|Y_t^H - Y_{t'}^{H'}|$ is stated in Theorem \ref{thm:regularity-SDE} and is based on a comparison with the OU process.

For $H\in (0,1)$, the fractional OU process is the solution to the following equation:
\begin{align}\label{eq:o-u-def}
U_t^H = U_0^H -\int_0^t U_s^H \diff s +  \fBm_t^H .
\end{align}
We recall that this equation admits a unique solution which can be written as $U_t^H = e^{-t} U_0^H + \int_0^t e^{-(t-s)} d B_s^H$ (see \cite[Proposition A.1]{cheridito2003fractional}). Besides, this process admits a unique stationary solution $\overline{U}^{H}$ given by $\overline{U}^H_{t} = \int_{-\infty}^t e^{-(t-s)} \diff B^H_{s}$, which corresponds to $U_0^H=\int_{-\infty}^0 e^{s} d B_s^H$ (see the beginning of \cite[Section 2]{cheridito2003fractional}). In view of this expression, we  consider $\overline{U}$ as a random field indexed by $(t,H)$. 

\begin{proposition}\label{thm:regularity-OU}
Let $\mathcal{H}$ be a compact subset of $(0,1)$. For $\varepsilon \in (0,1)$ and $p\ge1$, there exists a random variable $\mathbf{C}$ with a finite moment of order $p$ such that almost surely, for all $t' \ge t \ge 0$ and $H,H' \in \mathcal{H}$,
\begin{align*}
|\overline{U}_t^{H} - \overline{U}_{t'}^{H'}| \leq \mathbf{C}\, (1+t')^{\varepsilon} \left( 1 \wedge |t'-t|^{\min(\mathcal{H})} +|H-H'|\right)^{1-\varepsilon} .
\end{align*}
\end{proposition}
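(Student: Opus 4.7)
The plan is to mimic the proof of Theorem \ref{thm:whole-regularity} while exploiting the stationarity of $\overline{U}$ in $t$: since $\EE(\overline{U}^H_t)^2$ does not depend on $t$, there is no need for a weight $(1+t)^{-\alpha}$ as in the fBm case, and the factor $(1+t')^{\varepsilon}$ will instead arise from a Borel--Cantelli argument across unit time intervals.

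First, I would rely on Lemma \ref{lem:Ubar-increments} to obtain a moment estimate of the form
\begin{align*}
\EE\bigl|\overline{U}^H_t - \overline{U}^{H'}_{t'}\bigr|^p \le C_p \bigl(|t-t'|^{p \min(\mathcal{H})} + |H-H'|^{p(1-\varepsilon/8)}\bigr),
\end{align*}
uniformly in $t,t'\ge 0$ and $H,H'\in\mathcal{H}$ (any logarithmic factor is absorbed by lowering the H\"older exponent in $H$). Writing $a=\min(\mathcal{H})$ and $K_n:=[n,n+1]\times\mathcal{H}$, I then apply the multiparameter GRR lemma (Lemma \ref{lem:GRR}) on each $K_n$ separately. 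For $p$ large enough, this produces random variables $\mathbf{C}_n$ such that
\begin{align*}
\sup_{\substack{(t,H),(t',H')\in K_n\\ (t,H)\neq (t',H')}} \frac{|\overline{U}^H_t - \overline{U}^{H'}_{t'}|}{\bigl(|t-t'|^a + |H-H'|\bigr)^{1-\varepsilon/2}} \le \mathbf{C}_n,
\end{align*}
with $\sup_n \EE \mathbf{C}_n^p < \infty$, the uniformity following from the joint stationarity of the field $(\overline{U}^H_{\cdot+n})_{H\in\mathcal{H}}$ in $n$.

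Next, a union bound / Markov argument shows that $\mathbf{C}^* := \sup_{n\in\N}(1+n)^{-\varepsilon/2}\mathbf{C}_n$ has moments of every order, since $\PP(\mathbf{C}^*>\lambda) \le \lambda^{-p}\sum_n (1+n)^{-\varepsilon p/2}\sup_n\EE\mathbf{C}_n^p$, finite for $p$ large enough. Applying the same Borel--Cantelli reasoning to the Gaussian suprema $\sup_{(s,H)\in K_n}|\overline{U}^H_s|$, which have uniformly bounded moments of all orders, also yields a random variable $\mathbf{C}^{**}$ with $|\overline{U}^H_s|\le \mathbf{C}^{**}(1+s)^{\varepsilon/2}$ for every $s\ge 0$ and $H\in\mathcal{H}$.

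To conclude, I would split according to the size of $|t-t'|$. If $|t-t'|\le 1$, both $(t,H)$ and $(t',H')$ lie in $K_n\cup K_{n+1}$ with $n=\lfloor t\rfloor$, so inserting $\overline{U}^{H}_{n+1}$ as a pivot and applying the two local GRR bounds gives the desired estimate with an extra factor bounded by $C\mathbf{C}^*(1+t')^{\varepsilon/2}$, after using that $|t-(n+1)|\le|t-t'|$. If $|t-t'|>1$, then $1\wedge |t-t'|^a+|H-H'|\ge 1$, and the triangle inequality together with the pointwise bound involving $\mathbf{C}^{**}$ yields $|\overline{U}^H_t-\overline{U}^{H'}_{t'}|\le 2\mathbf{C}^{**}(1+t')^{\varepsilon/2}$. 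Setting $\mathbf{C}:=\mathbf{C}^*+\mathbf{C}^{**}$ and relabelling $\varepsilon/2\to\varepsilon$ gives the statement. The main technical point I expect to be delicate is ensuring that the moment bound provided by Lemma \ref{lem:Ubar-increments} is genuinely uniform in the base time, since any residual growth in $t$ at that level would propagate through GRR and break the summability needed for the Borel--Cantelli step.
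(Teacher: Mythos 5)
Your proposal is correct, and it takes a genuinely different route from the paper. The paper's proof reuses the machinery of Theorem \ref{thm:whole-regularity}: it introduces the weighted process $\overline{\mathbb{U}}_t^H=(1+t)^{-\alpha}\overline{U}_t^H$, applies the GRR lemma once globally on $[0,T]\times\mathcal{H}$, splits the resulting double integral into near-diagonal and far-diagonal pieces ($I_1,I_2,I_3$), checks uniform boundedness in $T$ for $\alpha\ge 1/p$, passes to the limit by monotone convergence, and finally un-weights via \eqref{eq:tinc}, which is where the $(1+t)^{\alpha+\eta}$ prefactor appears. You instead keep $\overline{U}$ unweighted, apply GRR locally on unit blocks $[n,n+1]\times\mathcal{H}$ (where Lemma \ref{lem:Ubar-increments} makes the GRR integrand bounded by a constant uniform in $n$, either by stationarity or directly from the uniform-in-time moment bound), and paste the blocks together with a Markov/union-bound argument that produces the slowly growing envelope $(1+t')^{\varepsilon}$. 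Your concern at the end is already settled: Lemma \ref{lem:Ubar-increments} is stated with a constant uniform in $t,t'\ge 0$, precisely because of stationarity, so the summability in the union bound holds. Your route avoids the integral splitting and the un-weighting step and is arguably more transparent here, at the cost of not being transferable to the non-stationary fBm setting, which is why the paper keeps a single unified scheme; two minor points to make explicit are that one must first fix a continuous modification via Kolmogorov's criterion before invoking GRR, and that the tail bound $\PP(\mathbf{C}^*>\lambda)\le C\lambda^{-p}$ only yields moments of order strictly less than $p$, so one should run the argument with some $p'>p$ to get the finite $p$-th moment claimed in the statement.
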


\begin{proof}
The scheme of proof is similar to Theorem~\ref{thm:whole-regularity}: we introduce the auxiliary process 
\begin{align}\label{eq:Ubb-def}
\overline{\mathbb{U}}_t^H = (1+t)^{-\alpha} \overline{U}_t^H, 
\end{align}
for some $\alpha \in (0,1)$, compute the moments of $\overline{\mathbb{U}}_t^H-\overline{\mathbb{U}}_{t'}^{H'}$ and finally apply the GRR Lemma. First, we have from Lemma \ref{lem:Ubb-increments} that for $p\geq 1$ and $t' \ge t$, 
\begin{align}\label{eq:upperb-increments-Ubb}
\mathbb{E} | \overline{\mathbb{U}}_t^H-\overline{\mathbb{U}}_{t'}^{H'} |^{p} \leq C  (1+t)^{-p\alpha} \left( 1 \wedge |t-t'|^{p \min(\mathcal{H})} + |H-H'|^{p} \right).
\end{align}
By Kolmogorov's continuity theorem (see e.g. \cite[Th. 2.3.1]{Khosh} in the multiparameter setting) and the previous inequality, $\overline{\mathbb{U}}$ and $\overline{U}$ admit a continuous modification on any compact subset of $\R_{+}\times \mathcal{H}$, and we still denote by $\overline{\mathbb{U}}$ and $\overline{U}$ these continuous modifications. Let $p > \frac{4}{\varepsilon \min(\mathcal{H})}$ and let $\eta \in (\frac{1}{p}, \min(\mathcal{H}))$.
Apply Lemma \ref{lem:GRR} with $a=\min(\mathcal{H})$, $b=1$, $c=\eta$ and $d=1$: we get that for $T \ge 0$,
\begin{align*}
\mathbb{E} \sup_{\substack{t,t' \in [0,T] \\H,H' \in \mathcal{H}\\(t,H)\neq (t',H')}} \frac{|\overline{\mathbb{U}}_t^H-\overline{\mathbb{U}}_{t'}^{H'}|^{p}}{\left( 1\wedge |t-t'|^a + |H-H'| \right)^{p-4/a}}  & \leq C \int_0^T \int_0^T \int_{\mathcal{H}} \int_{\mathcal{H}} \frac{\mathbb{E} | \overline{\mathbb{U}}_s^h-\overline{\mathbb{U}}_{s'}^{h'} |^{p} }{|s-s'|^{ \eta p}\wedge|s-s'|^{ a p} + |h-h'|^{p}} \diff h' \diff h \diff s' \diff s \\
& =: C \left( I_1 + I_2 + I_3 \right),
\end{align*}
where $I_{1}$ is the integral for $s'$ between $0$ and $(s-1)\vee 0$; $I_{2}$ for $s'$ between $(s-1)\vee 0$ and $(s+1)\wedge T$; and $I_{3}$ for $s'$ between $(s+1)\wedge T$ and $T$.

Let us first bound $I_1$ and omit the similar proof for $I_3$. We have
\begin{align*}
I_1 \leq C \int_0^T \int_{0}^{(s-1) \vee 0} \sup_{h,h' \in \mathcal{H}} \mathbb{E} (|\overline{\mathbb{U}}_s^h|^{p} + |\overline{\mathbb{U}}_{s'}^{h'}|^{p})\, |s-s'|^{-\eta p}\, \diff s' \diff s.
\end{align*} 
For any $h\in \mathcal{H}$, $\mathbb{E} |\overline{\mathbb{U}}_t^{h}|^{p} = (1+t)^{-p \alpha} \mathbb{E} |\overline{U}_0^{h}|^{p} \leq C (1+t)^{-p \alpha}$ for some $C$ independent of $h$ in $\mathcal{H}$, hence one gets
\begin{align*}
I_1  & \leq C \int_1^T \int_{0}^{s-1} \Big(  (1+s)^{-p\alpha} + (1+s')^{-p \alpha} \Big) |s-s'|^{-\eta p}\,  \diff s' \diff s\\
& \leq C  \int_1^T \left( (1+s)^{-p \alpha} (1-s^{1-\eta p}) + \int_{0}^{s-1} (1+s')^{-p \alpha} (s-s')^{-\eta p} \diff s' \right) \diff s .
\end{align*}
Using that $\eta p > 1$, we get
\begin{align*}
I_1 \leq C (T^{-p \alpha + 1}+1) ~\mbox{and} ~ I_3 \leq C (T^{-p \alpha + 1}+1).
\end{align*}
Choosing $\alpha \geq \frac{1}{p}$, it follows that $I_1+I_{3}$ is bounded uniformly in $T$.
Let us now bound $I_2$ from above. Using \eqref{eq:upperb-increments-Ubb} gives
\begin{align*}
I_2 & \leq C \int_0^T \int_{(s-1) \vee 0}^{(s+1) \wedge T} (1+ s \wedge s')^{-p \alpha}\diff s \diff s' \\
I_2 & \leq C (T^{-p \alpha + 1} + 1).
\end{align*}
It follows that $I_2$ is bounded uniformly in $T$ since $\alpha \geq \frac{1}{p}$. Hence, by a monotone convergence argument, the random variable
\begin{align*}
\mathbf{C}_1 = \sup_{\substack{t,t' \in \R_+ \\H,H' \in \mathcal{H}\\(t,H)\neq (t',H')}} \frac{|\overline{\mathbb{U}}_t^H-\overline{\mathbb{U}}_{t'}^{H'}|}{\left( |t-t'|^\eta \wedge |t-t'|^a + |H-H'| \right)^{1-\frac{4}{ap}}} 
\end{align*}
has a finite moment of order $p$. Therefore, almost surely for all $t,t' \ge 0$ and $H,H' \in \mathcal{H}$,
\begin{align}\label{eq:asBoundUbb}
| \overline{\mathbb{U}}_t^H - \overline{\mathbb{U}}_{t'}^{H'} | \leq \mathbf{C}_1 \left( |t-t'|^{\eta} \wedge |t-t'|^{a}+|H-H'|\right)^{1-\frac{4}{ap}} .
\end{align}
For $H'=\frac{1}{2}$, $t'=0$ and $t\geq 1$, we have that
\begin{align}\label{eq:asBoundUbb2}
| \overline{\mathbb{U}}_t^H | & \leq \mathbf{C}_1 \left( t^{\eta}+|H-\tfrac{1}{2}|\right)^{1-\frac{4}{ p a}}  + |\overline{\mathbb{U}}_0^{\frac{1}{2}}| \nonumber \\
& \leq \mathbf{C}_2 \, (t+1)^{\eta} ,
\end{align}
where $\mathbf{C}_2 :=\mathbf{C}_1 +|\overline{\mathbb{U}}_0^{\frac{1}{2}}|$ also has a finite moment of order $p$.

Now for $t' \ge t$, one has
\begin{align}\label{eq:1stineqUbar}
(1+t')^{-\alpha}  | \overline{U}_t^H-\overline{U}_{t'}^{H'} | \leq | (1+t)^{-\alpha} - (1+t')^{-\alpha}|\, |\overline{U}_t^H|  + |\overline{\mathbb{U}}_t^H - \overline{\mathbb{U}}_{t'}^{H'}|.
\end{align}
In view of \eqref{eq:asBoundUbb2}, $|\overline{U}_t^H| \leq \mathbf{C}_2 (1+t)^{\alpha+\eta} \leq \mathbf{C}_{2} (1+t)^{1+\eta}$. Hence using \eqref{eq:tinc} with $h=1$ yields $| (1+t)^{-\alpha} - (1+t')^{-\alpha}|\, |\overline{U}_t^H| \leq \mathbf{C}_2 (t'-t)\, (1+t)^{-\alpha+\eta}$. If $t'-t\geq 1$, it also comes directly that $| (1+t)^{-\alpha} - (1+t')^{-\alpha}|\, |\overline{U}_t^H| \leq \mathbf{C}_2 (1+t)^{\eta}$. Thus we get that for any $t'\geq t$, 
\begin{align}\label{eq:2ndIneqUbar}
| (1+t)^{-\alpha} - (1+t')^{-\alpha}|\, |\overline{U}_t^H| \leq \mathbf{C}_2  (1+t)^{\eta} \left(1\wedge (t'-t)\right).
\end{align} 
It remains to plug \eqref{eq:2ndIneqUbar} and \eqref{eq:asBoundUbb} in \eqref{eq:1stineqUbar} to obtain that
\begin{align*}
 | \overline{U}_t^H-\overline{U}_{t'}^{H'} | \leq \mathbf{C}_2 (1+t)^{\alpha+\eta} (1 \wedge |t'-t|) + \mathbf{C}_1 \left( |t-t'|^{\eta} \wedge |t-t'|^{a}+|H-H'|\right)^{1-\frac{4}{ap}} .
\end{align*}
Since $\mathbf{C}_2 \ge \mathbf{C}_1$ and $p > \frac{4}{a \varepsilon}$, we have
\begin{align*}
 | \overline{U}_t^H-\overline{U}_{t'}^{H'} | \leq \mathbf{C}_2 (1+t)^{\alpha+\eta} \left( 1 \wedge |t-t'|^{a}+|H-H'|\right)^{1-\varepsilon} .
\end{align*}
Choosing $\alpha = \frac{\varepsilon}{2}$ (which satisfies the constraint $\alpha \geq \frac{1}{p})$ and $\eta=\frac{\varepsilon}{2}$ (which satisfies $\eta>\frac{1}{p}$) yields
\begin{align*}
 | \overline{U}_t^H-\overline{U}_{t'}^{H'} | \leq \mathbf{C}_2 (1+t)^{\varepsilon} \left( 1 \wedge |t-t'|^{a}+|H-H'|\right)^{1-\varepsilon} .
\end{align*}
\end{proof}

We can now deduce the long time regularity in $H$ of the solution to \eqref{eq:SDE} by comparing its increments to those of the stationary fractional OU process.

\begin{theorem}\label{thm:regularity-SDE}
Let $\mathcal{H}$ be a compact subset of $(0,1)$. For each $H \in \mathcal{H}$, let $Y^{H}$ be the solution to \eqref{eq:SDE}, with the drift $b$ satisfying \eqref{eq:drift-coerciv}.
\begin{enumerate}[(i)]
\item  Let $\varepsilon \in (0,1)$ and $p \ge 1$. There exists a random variable $\mathbf{C}$ with a finite moment of order $p$ such that almost surely, for all $t' \ge t \ge 0$ and $H,H' \in \mathcal{H}$,
\begin{align*}
|Y_t^{H} - Y_{t'}^{H'}| \leq \mathbf{C}\, (1+t)^{\varepsilon}\, \left( 1 \wedge |t'-t|^{\min(\mathcal{H})} +|H-H'|\right)^{1-\varepsilon} .
\end{align*}
\item Let $p \ge 1$. There exists a constant $C$ such that for all $t' \ge t \ge 0$ and $H,H' \in \mathcal{H}$,
\begin{align*}
 \mathbb{E} \Big|Y_t^{H} - Y_{t'}^{H'}\Big|^p \leq C\, \left(1 \wedge |t'-t|^{p \min(\mathcal{H})} + |H-H'|^p\right).
\end{align*}
\end{enumerate}
\end{theorem}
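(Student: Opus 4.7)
The plan is to control the difference $D_t := Y_t^H - Y_t^{H'}$ by comparing it with the difference $\Delta_t := \overline{U}_t^H - \overline{U}_t^{H'}$ of stationary fractional Ornstein--Uhlenbeck processes, for which the desired regularity in $H$ has already been established in Proposition~\ref{thm:regularity-OU}. The crucial observation is that, while $D_t$ itself is not differentiable in $t$ (it inherits the fBm increment $B_t^H - B_t^{H'}$), the auxiliary process
\begin{align*}
D_t^V := (Y_t^H - \overline{U}_t^H) - (Y_t^{H'} - \overline{U}_t^{H'}) = D_t - \Delta_t
\end{align*}
is absolutely continuous, with derivative $b(Y_t^H) - b(Y_t^{H'}) + \Delta_t$. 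Moreover $Y_0^H = Y_0 = Y_0^{H'}$, hence $D_0^V = -\Delta_0$.

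The first step is to derive a Gr\"onwall-type estimate on $|D_t^V|^2$. Starting from
\begin{align*}
\frac{d}{dt}|D_t^V|^2 = 2 \langle D_t - \Delta_t,\, b(Y_t^H) - b(Y_t^{H'}) \rangle + 2 \langle D_t^V,\, \Delta_t \rangle ,
\end{align*}
I would combine the coercivity $\langle D_t, b(Y_t^H) - b(Y_t^{H'}) \rangle \le -\kappa |D_t|^2$, the Lipschitz bound $|b(Y_t^H) - b(Y_t^{H'})| \le K |D_t|$, the elementary inequality $|D_t|^2 \ge \tfrac{1}{2}|D_t^V|^2 - |\Delta_t|^2$, and Young's inequality on the cross-terms, to obtain a constant $c = c(\kappa, K)$ such that
\begin{align*}
\frac{d}{dt}|D_t^V|^2 \le -\frac{\kappa}{4}|D_t^V|^2 + c\, |\Delta_t|^2 .
\end{align*}
Gr\"onwall's lemma then yields $|D_t^V|^2 \le e^{-\kappa t/4}|\Delta_0|^2 + c \int_0^t e^{-\kappa(t-s)/4}|\Delta_s|^2 \diff s$, and the triangle inequality gives $|D_t| \le |D_t^V| + |\Delta_t|$.

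For part~(i), I would plug the almost-sure bound $|\Delta_s| \le \mathbf{C}\,(1+s)^{\varepsilon}|H-H'|^{1-\varepsilon}$ coming from Proposition~\ref{thm:regularity-OU} into the above, and use the trivial estimate $\int_0^t e^{-\kappa(t-s)/4}(1+s)^{2\varepsilon} \diff s \le C (1+t)^{2\varepsilon}$, which gives $|D_t| \le \mathbf{C}'(1+t)^{\varepsilon}|H-H'|^{1-\varepsilon}$. For part~(ii), by the stationarity of $\overline{U}^H$ and the Gaussianity of $\Delta_s$, the variance bound at $t=t'$ extracted from Lemma~\ref{lem:Ubb-increments} yields $\mathbb{E}|\Delta_s|^p \le C|H-H'|^p$ uniformly in $s \ge 0$. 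Applying Jensen's inequality to the finite weight $e^{-\kappa(t-s)/4}\diff s$ on $[0,t]$ then gives
\begin{align*}
\mathbb{E}\left( \int_0^t e^{-\kappa(t-s)/4}|\Delta_s|^2 \diff s \right)^{p/2} \le C \int_0^t e^{-\kappa(t-s)/4}\, \mathbb{E}|\Delta_s|^p \diff s \le C' |H-H'|^p ,
\end{align*}
from which $\sup_{t\ge 0}\mathbb{E}|D_t|^p \le C|H-H'|^p$ follows at once.

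The main obstacle lies in the derivation of the differential inequality: the coercivity of $b$ only provides information on $\langle D_t, b(Y_t^H) - b(Y_t^{H'}) \rangle$, whereas the natural energy identity for $D_t^V$ involves $D_t^V = D_t - \Delta_t$. Absorbing the resulting cross-terms involving $\Delta_t$ via Young's inequality while retaining a clean exponential contraction rate on $|D_t^V|$ (with a forcing term that is quadratic in $\Delta_t$ and hence controlled by Proposition~\ref{thm:regularity-OU} and Lemma~\ref{lem:Ubb-increments}) is the delicate point.
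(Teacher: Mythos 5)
Your proof is correct and follows essentially the same route as the paper: compare $Y^H-Y^{H'}$ with the difference of fractional Ornstein--Uhlenbeck processes through a coercivity/Young/Gr\"onwall energy estimate on the absolutely continuous part, then invoke Proposition~\ref{thm:regularity-OU} for (i) and the uniform moment bound on $\overline{U}^H_t-\overline{U}^{H'}_t$ for (ii). The only (harmless) differences are that you work directly with the stationary OU process and absorb the nonzero initial condition $D_0^V=-\Delta_0$ into the Gr\"onwall bound, whereas the paper starts with the OU process issued from $Y_0$ and passes to the stationary one via $\mathcal{U}_t-\overline{\mathcal{U}}_t=e^{-t}\overline{\mathcal{U}}_0$, and that the moment bound you need in (ii) is the one of Lemma~\ref{lem:Ubar-increments} rather than Lemma~\ref{lem:Ubb-increments}.
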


\begin{remark}
Since \eqref{eq:SDE} is an SDE with additive noise, the regularity in $t'-t$ and $H'-H$ in $(i)$ of the previous theorem comes directly from the regularity of the noise, thus the same comments about optimality of the exponents as in Remark~\ref{rk:optimality} can be formulated here as well. \\
As for the behaviour when $t$ increases, it is known for the stationary Ornstein-Uhlenbeck process that the running maximum of this process behaves almost surely asymptotically as $\sqrt{2\log(t)}$ (see \cite{Pickands}). So the term $(1+t)^\varepsilon$ we obtain might be slightly sub-optimal, but it does hold for any $t\geq 0$.\\
As for the moments of the increments obtained in $(ii)$, we see that the term $(1+t)^{\varepsilon}$ is no longer here. The Lipschitz regularity in the Hurst parameter is optimal, as well as the time regularity, which is exactly that of the fBm.
\end{remark}

\begin{proof}
The proof is based on a comparison with the Ornstein-Uhlenbeck process defined in~\eqref{eq:o-u-def} with initial condition $Y_0$. Recall that $\overline{U}^H$ denotes the stationary solution of \eqref{eq:o-u-def}. Let $t' \ge t \ge 0$ and $H,H' \in \mathcal{H}$, then
\begin{align}\label{eq:triangle-ineq}
| Y_{t'}^{H'} -Y_t^H | \leq | Y_{t'}^{H'} -Y_t^{H'} | + | Y_{t}^{H} -Y_t^{H'} | .
\end{align}

\paragraph{Bound on $| Y_{t'}^{H'} -Y_t^{H'} |$.} We proceed slightly differently depending whether $t'-t \leq 1$ or $t'-t > 1$.
First, assume that $t'-t \leq 1$. Using \eqref{eq:drift-coerciv}, there is
\begin{align}
 | Y_{t'}^{H'} -Y_t^{H'} -U_{t'}^{H'}+U_{t}^{H'}| 
 & \leq \int_t^{t'} |b(Y_s^{H'})-b(U_s^{H'})| \diff s + \int_t^{t'} \left( | b(U_s^{H'})| + |U_s^{H'}| \right) \diff s \nonumber  \\
 & \leq \int_t^{t'} K |Y_s^{H'}-U_s^{H'}| \diff s + \int_t^{t'} \left( K |U_s^{H'}|+|b(0)| + |U_s^{H'}| \right) \diff s \nonumber \\
 & \leq \int_t^{t'} K |Y_s^{H'}-U_s^{H'}| \diff s+ C \left( \int_t^{t'} |U_s^{H'}|  \diff s+ |t'-t| \right) . \label{eq:time-reg-sde-0}
\end{align}
Use \eqref{eq:SDE} and \eqref{eq:o-u-def} to get that $Y_s^{H'}-U_s^{H'} = \int_0^s ( b(Y_r^{H'}) + U_r^{H'} ) \diff r$. Since $b$ is locally bounded and $Y^{H'}$ and $U^{H'}$ are continuous, the process $y\mapsto Y_s^{H'}-U_s^{H'} = \int_0^s ( b(Y_r^{H'}) + U_r^{H'} ) \diff r$ is differentiable. Hence using first \eqref{eq:drift-coerciv} and then Young's inequality, we get
\begin{align*}
\frac{\diff}{\diff t} |Y_s^{H'}-U_s^{H'} |^2 & = 2 \langle Y_s^{H'}-U_s^{H'}, b(Y_s^{H'})-b(U_s^{H'})+b(U_s^{H'})+U_s^{H'} \rangle \\
& \leq -2 \kappa |Y_s^{H'}-U_s^{H'}|^2 + \kappa  |Y_s^{H'}-U_s^{H'}|^2 + \frac{1}{\kappa} |b(U_s^{H'})+U_s^{H'}|^2 \\
& \leq - \kappa  |Y_s^{H'}-U_s^{H'}|^2 + C \left( |U_s^{H'}|^2 +1 \right) .
\end{align*}
It follows that
\begin{align}
|Y_s^{H'}-U_s^{H'} |^2 \leq C \int_0^s e^{-\kappa(s-r)}  \left( |U_r^{H'}|^2 +1 \right) \diff r . \label{eq:time-compar-ou}
\end{align}
Injecting \eqref{eq:time-compar-ou} in \eqref{eq:time-reg-sde-0} gives
\begin{align*}
 | Y_{t'}^{H'} -Y_t^{H'} -U_{t'}^H+U_{t}^H| & \leq C \int_t^{t'}  \left( \int_0^s e^{-\kappa(s-r)}  \left( |U_r^{H'}|^2 +1 \right) \diff r\right)^{\frac{1}{2}}  \diff s+ C \left( \int_t^{t'} |U_s^{H'}| \diff s+ |t'-t| \right) 
\end{align*}
and
\begin{align*}
\begin{split}
 | Y_{t'}^{H'} -Y_t^{H'}| & \leq C \int_t^{t'}  \left( \int_0^s e^{-\kappa(s-r)}  \left( |U_r^{H'}|^2 +1 \right) \diff r \right)^{\frac{1}{2}} ds+ C \left( \int_t^{t'} |U_s^{H'}| \diff s+ |t'-t| \right) \\ 
 & \quad +|U_{t'}^H-U_{t}^H| .
 \end{split}
\end{align*}
Using $U_{t'}^{H'}-\overline{U}_{t'}^{H'} = e^{-t'}(Y_0-\overline{U}_{0}^{H'})$ and
$U_{t'}^{H'}-U_t^{H'} - \overline{U}_{t'}^{H'}+\overline{U}_t^{H'} = (e^{-t'}-e^{-t}) ( Y_0 -\overline{U}_0^{H'})$ yields that for $t'-t \leq 1$,
\begin{align}
\begin{split}
 | Y_{t'}^{H'} -Y_t^{H'}| & \leq C \int_t^{t'}  \left( \int_0^s e^{-\kappa(s-r)}  \left( |\overline{U}_r^{H'}|^2 +1 \right) \diff r \right)^{\frac{1}{2}} \diff s+ C \left( \int_t^{t'} |\overline{U}_s^{H'}| \diff s+ |t'-t| \right) \\ 
 & \quad +|\overline{U}_{t'}^{H'}-\overline{U}_{t}^{H'}| +C \left( 1+|Y_0|+|\overline{U}_0^{H'}| \right) |t'-t|  . \label{eq:time-reg-sde-1}
 \end{split}
\end{align}
Now assume $t'-t \ge 1$. Using \eqref{eq:time-compar-ou}, 
\begin{align*}
| Y_{t'}^{H'} -Y_t^{H'}| & \leq |Y_{t'}^{H'}-U_{t'}^{H'}| + |Y_{t}^{H'}-U_t^{H'}| + |U_{t'}^{H'} -U_{t}^{H'} | \nonumber \\
\begin{split}
& \leq C \left( \int_0^{t'} e^{-(t'-r)} \left( |U_r^{H'}|^2+1 \right) \diff r \right)^{\frac{1}{2}} +  C \left( \int_0^{t} e^{-(t-r)} \left( |U_r^{H'}|^2+1 \right) \diff r \right)^{\frac{1}{2}}  \\ 
& \quad +  |U_{t'}^{H'} -U_{t}^{H'} | 
\end{split}
\end{align*}
and as before comparing to the stationary OU process gives
\begin{align}
\begin{split}
| Y_{t'}^{H'} -Y_t^{H'}| 
& \leq C \left( \int_0^{t'} e^{-(t'-r)} \left( |\overline{U}_r^{H'}|^2+1 \right) \diff r \right)^{\frac{1}{2}} +  C \left( \int_0^{t} e^{-(t-r)} \left( |\overline{U}_r^{H'}|^2+1 \right) \diff r \right)^{\frac{1}{2}}  \\ 
& \quad +  |\overline{U}_{t'}^{H'} -\overline{U}_{t}^{H'} |+ C \left( 1+|Y_0|+|\overline{U}_0^{H'}| \right) .  \label{eq:time-reg-sde-2}
\end{split}
\end{align}

\paragraph{Bound on $| Y_{t}^{H} -Y_t^{H'} |$.}
We will use the following set of notations: for $H, H'\in (0,1)$,
\begin{align*}
\mathcal{B} = \fBm^{H} - \fBm^{H'},~ \mathcal{U} = U^{H}-U^{H'} ~\mbox{and }~ \overline{\mathcal{U}} = \overline{U}^{H}-\overline{U}^{H'}.
\end{align*}
The idea is to establish the following inequality: with $\kappa$ from \eqref{eq:drift-coerciv},
\begin{align}\label{eq:comparison-o-u}
| Y_t^{H}-Y_t^{H'} |^{2p} 
& \leq C \left( e^{-2pt}  | \overline{\mathcal{U}}_{0} |^{2p} + | \overline{\mathcal{U}}_{t} |^{2p} +  C \int_0^t e^{-\kappa(t-r)} |\overline{\mathcal{U}}_{r}  |^{2p} \diff  r \right) .
\end{align}
By the triangle inequality, it suffices to prove that
\begin{align*}%
| Y_t^{H}-Y_t^{H'} - \overline{\mathcal{U}}_{t} |^{2p}  \leq C  \int_0^t e^{-\kappa (t-r)}  |\overline{\mathcal{U}}_{r}|^{2p}\, \diff r.
\end{align*}
We work first with the nonstationary process $\mathcal{U}$. 
As before, the process $t\mapsto Y_t^{H}-Y_t^{H'} - \mathcal{U}_{t} = \int_{0}^t (b(Y^{H}_{s}) - b(Y^{H'}_{s}) + U^{H}_{s} -U^{H'}_{s}) \diff s$ is differentiable. Hence \eqref{eq:drift-coerciv} yields
\begin{align*}
\frac{\diff}{\diff t} | Y_t^{H}-Y_t^{H'} - \mathcal{U}_{t}|^2  & = 2 \langle  Y_t^{H}-Y_t^{H'} - \mathcal{U}_{t}, \, b(Y_t^{H})-b(Y_t^{H'}) + \mathcal{U}_{t} \rangle \\
& \leq - 2 \kappa |Y_t^{H}-Y_t^{H'} |^2 - 2 |\mathcal{U}_{t}|^2 + 2(1+K) | Y_t^{H}-Y_t^{H'}| |\mathcal{U}_{t}| .
\end{align*}
Apply Young's inequality to get
\begin{align*}
2(1+K) | Y_t^{H}-Y_t^{H'}|\, |\mathcal{U}_{t}| \leq \kappa |Y_t^{H}-Y_t^{H'}|^2 + \frac{(1+K)^2}{\kappa} |\mathcal{U}_{t}|^2 .
\end{align*}
It follows that
\begin{align*}
\frac{\diff}{\diff t} | Y_t^{H}-Y_t^{H'} - \mathcal{U}_{t}|^2 
& \leq - \kappa \left(|Y_t^{H}-Y_t^{H'} |^2 + |\mathcal{U}_{t}|^2\right) + \left(\kappa-2 + \frac{(1+K)^2}{\kappa}\right) |\mathcal{U}_{t}|^2 \\
& \leq -\kappa  | Y_t^{H}-Y_t^{H'} - \mathcal{U}_{t}|^2 + C |\mathcal{U}_{t}|^2 .
\end{align*}
Hence Gr\"onwall's lemma followed by Jensen's inequality give
\begin{align*}
| Y_t^{H}-Y_t^{H'} - \mathcal{U}_{t}|^{2p} \leq C \int_0^t e^{-\kappa(t-r)} |\mathcal{U}_{r}|^{2p} \diff  r .
\end{align*}
Now a similar inequality holds with the stationary OU process:
\begin{align*}
| Y_t^{H}-Y_t^{H'} |^{2p} & \leq C | \mathcal{U}_{t} |^{2p} +  C \int_0^t e^{-\kappa(t-r)} |\mathcal{U}_{r}|^{2p} \diff  r \\
& \leq  C\, |\overline{\mathcal{U}}_{t} |^{2p} +  C \int_0^t e^{-\kappa(t-r)} |\overline{\mathcal{U}}_{r}|^{2p} \diff  r   + C\, | \overline{\mathcal{U}}_{t} - \mathcal{U}_{t}  |^{2p} +  C \int_0^t e^{-\kappa(t-r)} |\overline{\mathcal{U}}_{r} - \mathcal{U}_{r} |^{2p} \diff  r  .
\end{align*}
Notice that $U^{H}_{t} = e^{-t}Y_{0} + \int_{0}^t e^{-(t-s)} \diff B^{H}_{s}$ and $\overline{U}^{H}_{t} = e^{-t}\overline{U}^{H}_{0} + \int_{0}^t e^{-(t-s)} \diff B^{H}_{s}$. Hence 
$\mathcal{U}_t - \overline{\mathcal{U}}_t = e^{-t}\, \overline{\mathcal{U}}_0$ and we have \eqref{eq:comparison-o-u}.

\paragraph{Proof of $(i)$.} Let $\varepsilon \in (0,1)$ and $p \ge 1$. By
 Proposition \ref{thm:regularity-OU}  there exists a random variable $\mathbf{C}$ with a finite moment of order $p$ such that almost surely
\begin{align*}
| \mathcal{\overline{U}}_t | & \leq \mathbf{C} (1+t)^{\varepsilon} |H-H'|^{1-\varepsilon} \\
| \overline{U}_{t'}^{H'} -\overline{U}_t^{H'} | & \leq  \mathbf{C} (1+t')^{\varepsilon} \left( 1 \wedge |t'-t|^{H'} \right)^{1-\varepsilon} \\
 | \overline{U}_{t'}^{H'}  |  & \leq \mathbf{C} (1+t')^{\varepsilon} .
\end{align*}
Use the previous inequalities in \eqref{eq:time-reg-sde-1} when $t'-t\leq 1$, in \eqref{eq:time-reg-sde-2} when $t'-t\leq 1$ to get $|Y^{H'}_{t'} - Y^{H'}_{t}| \leq \mathbf{C}\, (1+t)^{\varepsilon}\, ( 1 \wedge |t'-t|^{\min(\mathcal{H})(1-\varepsilon)})$. Then use the previous inequalities in 
 \eqref{eq:comparison-o-u} to get $|Y^{H}_{t} - Y^{H'}_{t}| \leq \mathbf{C}\, (1+t)^{\varepsilon}\, |H-H'|^{1-\varepsilon} $ and recall \eqref{eq:triangle-ineq} to deduce the result of $(i)$.
\paragraph{Proof of $(ii)$.} To prove $(ii)$, apply Lemma \ref{lem:Ubar-increments} to  \eqref{eq:time-reg-sde-1}, \eqref{eq:time-reg-sde-2} and \eqref{eq:comparison-o-u}.
\end{proof}

\subsection{Regularity of ergodic means}\label{sec:application-ergodic}

Under the dissipative assumption \eqref{eq:drift-coerciv} on the drift $b$, the SDE \eqref{eq:SDE} has a unique invariant measure $\mu_H$ (see \textit{e.g.} \cite{Hairer} and \cite[Lemma 3$(ii)$]{cohen2011approximation}). Moreover, from \cite[Proposition 3.3]{panloup2020general}:
\begin{align}\label{eq:converg} 
 \lim_{t \rightarrow \infty}d\Big( \frac{1}{t} \int_{0}^t \delta_{Y_s^H} \diff s, \mu_H\Big) = 0 \quad a.s.,
\end{align}
where $d$ is any distance bounded by a $p$-Wasserstein distance.

For the modelling purposes discussed in the introduction, it is interesting to know the sensitivity in $H$ of $\mu_{H}$ and of the ergodic means. Besides, knowing the H\"older regularity of $H \mapsto d( \frac{1}{t} \int_{0}^t \delta_{Y_s^H} \diff s, \mu_H)$ permits to prove the the convergence in \eqref{eq:converg} uniformly in $H$, which in turns has a direct statistical application in the estimation of the Hurst parameter (see \cite[proof of Proposition 4.3]{HRstat}).
To do so, for $d$ a distance bounded by the $p$-Wasserstein distance, it comes:
\begin{align*}
\Big| d\Big( \frac{1}{t} \int_{0}^t \delta_{Y_s^H} \diff s, \mu_H\Big)-d\Big( \frac{1}{t} \int_{0}^t \delta_{Y_s^{H'}} \diff s, \mu_{H'}\Big) \Big| 
&\leq d( \mu_H, \mu_{H'})  + d \Big(\frac{1}{t} \int_{0}^t \delta_{Y_s^H} \diff s, \frac{1}{t} \int_{0}^t \delta_{Y_s^{H'}} \diff s\Big) \\
& \leq d( \mu_H, \mu_{H'}) + \Big(\frac{1}{t} \int_0^t |Y_s^H - Y_s^{H'} |^p \diff s \Big)^{\frac{1}{p}}.
\end{align*}
The main result of this section (Theorem \ref{th:ergodic-OU}) is the regularity in $|H-H'|$ of  $\frac{1}{t} \int_0^t |Y_s^H - Y_s^{H'} |^2 \diff s$. Then by taking the limit as $t\to +\infty$, we deduce in Corollary \ref{cor:reg-invariant} the regularity of $d( \mu_H, \mu_{H'})$.

First, observe that integrating the result of Theorem~\ref{thm:regularity-SDE}, there exists a random variable $\mathbf{C}$ such that almost surely, for all $t>0$ and $H,H' \in \mathcal{H}$, there is 
\begin{align*}
\frac{1}{t} \int_0^t |Y_s^H - Y_s^{H'} |^{2} \, \diff s \leq \mathbf{C} (1+t)^{2 \varepsilon } |H-H'|^{2-2\varepsilon} . 
\end{align*}
This result is not optimal since for any $H$, $Y^H$ is ergodic and so the left hand side converges as $t \to \infty$. Thus in Theorem \ref{th:ergodic-OU}, we will follow a different approach to get rid of the term $(1+t)^{2\varepsilon}$. We shall use again a GRR argument and rely on the fact that the variance of the ergodic means decreases over time (Lemma \ref{lem:young-V}). As in Section \ref{sec:application-reg}, we start by studying the ergodic means of the stationary fractional OU process and then rely on a comparison to conclude.

\begin{proposition}\label{prop:ergodic-OU}
Let $\mathcal{H}$ be a compact subset of $(0,1)$. Recall that $\overline{U}^{H}$ is the stationary Ornstein-Uhlenbeck. Let $ \beta \in  (0,1)$, and $p \ge 1$. There exists a random variable $\mathbf{C}$ with finite moment of order $p$ such that almost surely, for all $t,t' \ge 0$ and $H,H',K,K' \in \mathcal{H}$,
\begin{align*}
\Big| \frac{1}{t+1} \int_0^{t+1} | \overline{U}_s^{H} &-\overline{U}_s^{K}|^2 \diff s -  \frac{1}{t'+1} \int_0^{t'+1} |\overline{U}_{s}^{H'}-\overline{U}_{s}^{K'}|^2 \diff s \Big|  \\
& \leq \mathbf{C} \, (1+|t-t'|^{1-\beta}) \left( 1  \wedge |t-t'| + |H-H'| + |K-K'| \right)^{\beta}.
\end{align*}
\end{proposition}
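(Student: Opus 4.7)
The plan is to apply a multiparameter Garsia-Rodemich-Rumsey argument to the random field
\[
V(t, H, K) := \frac{1}{t+1} \int_0^{t+1} |\overline{U}_s^H - \overline{U}_s^K|^2 \diff s,
\]
indexed by $(t, H, K) \in \R_+ \times \mathcal{H} \times \mathcal{H}$, following the same scheme as in the proof of Proposition \ref{thm:regularity-OU}: derive moment estimates on increments of $V$, apply Lemma \ref{lem:GRR} (with $d=2$) on compact boxes $[0,T] \times \mathcal{H}^2$ with constants uniform in $T$, and conclude by monotone convergence as $T \to \infty$. The gap from exponent $1$ in the moment estimate down to $\beta$ in the almost sure bound is the usual GRR loss.

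I would split the increment through a pivot:
\[
V(t,H,K) - V(t',H',K') = \big(V(t',H,K) - V(t',H',K')\big) + \big(V(t,H,K) - V(t',H,K)\big).
\]
For the Hurst increment at fixed time, I apply the identity $|a|^2 - |b|^2 = \langle a-b,\, a+b\rangle$ under the integral sign, combined with Cauchy-Schwarz; this reduces the bound to the Hurst regularity of $\overline{U}_s$ in $L^p$ provided by Lemma \ref{lem:Ubar-increments}, multiplied by the $L^p$ norm of $\overline{U}_s^H - \overline{U}_s^K$, which is uniformly bounded in $s$ by stationarity. This yields a contribution scaling like $|H-H'|+|K-K'|$. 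For the time increment at fixed Hurst parameters, I appeal to Lemma \ref{lem:ergodic-U2} in the appendix, whose proof relies on the combinatorial Gaussian identity (Lemma \ref{lem:gaussian-product}) to express $\EE \big[\prod_i (Z_i^2 - \EE Z_i^2)\big]$ as a sum of products of off-diagonal covariances, exploiting the decay of correlations of $\overline{U}$ to bound the moments of the centered time average $V(t,H,K) - m(H,K)$, where $m(H,K) := \EE|\overline{U}_0^H - \overline{U}_0^K|^2$.

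The main obstacle is reconciling these two estimates with the specific form $(1+|t-t'|^{1-\beta})(1\wedge |t-t'| + |H-H'|+|K-K'|)^{\beta}$ of the bound. The factor $(1+|t-t'|^{1-\beta})$ reflects the fact that $V(t,H,K)$ does not decay but only fluctuates around $m(H,K)$; subtracting and adding the stationary means on both sides introduces an extra term $m(H,K) - m(H',K')$, itself Lipschitz in $(H,K)$ by Lemma \ref{lem:Ubar-increments}. Once assembled into a single moment estimate, this is fed into Lemma \ref{lem:GRR} with the distance $(1\wedge |t-t'|) + |H-H'| + |K-K'|$ on $\R_+ \times \mathcal{H}^2$, and the resulting quadruple integral is controlled uniformly in $T$ by the same three-zone splitting in the time variable as in the proof of Theorem \ref{thm:whole-regularity}; the statement then follows by monotone convergence as $T \to \infty$.
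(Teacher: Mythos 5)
Your proposal follows essentially the same route as the paper: centre the ergodic mean into $V_t^{H,K}$, control the leftover term $\mathbb{E}|\overline{U}_0^H-\overline{U}_0^K|^2-\mathbb{E}|\overline{U}_0^{H'}-\overline{U}_0^{K'}|^2$ via Lemma \ref{lem:Ubar-increments}, feed the moment bounds of Lemma \ref{lem:ergodic-U2} (built on Lemmas \ref{lem:gaussian-product} and \ref{lem:young-V}) into the multiparameter GRR lemma with the three-zone time splitting, and pass to the limit $T\to\infty$ by monotone convergence. The only point to make explicit is that your raw Hurst-increment bound of order $|H-H'|+|K-K'|$ must be interpolated with the time-decaying estimate of Lemma \ref{lem:young-V} (which is precisely what Lemma \ref{lem:ergodic-U2} records) so that the near-diagonal zone $I_2$ of the GRR integral remains bounded uniformly in $T$; this is also where the exponent $\beta$ and the prefactor $(1+|t-t'|^{1-\beta})$ originate.
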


\begin{proof}
For $t\geq 0$ and $H,K\in \mathcal{H}$, define
\begin{align}\label{eq:defV}
V_t^{H,K} = \frac{1}{t+1} \int_0^{t+1} | \overline{U}_s^H - \overline{U}_s^K |^2 \diff s - \mathbb{E} |\overline{U}_0^H - \overline{U}_0^K |^2 ,
\end{align}
and observe that
\begin{align*}
\Big| \frac{1}{t+1} \int_0^{t+1} | \overline{U}_s^{H}-\overline{U}_s^{K}|^2 \diff s &-  \frac{1}{t'+1} \int_0^{t'+1} |\overline{U}_{s}^{H'}-\overline{U}_{s}^{K'}|^2 \diff s \Big|\\
&\leq |V_t^{H,K}-V_{t'}^{H',K'}| +  \mathbb{E} \left| | \overline{U}_0^H - \overline{U}_0^K |^2 - | \overline{U}_0^{H'} - \overline{U}_0^{K'} |^2 \right|.
\end{align*}
By Lemma \ref{lem:Ubar-increments}, we have $\EE| \overline{U}_0^H - \overline{U}_0^{H'} |^2 \leq C |H-H'|^2$ and $\EE|\overline{U}_0^H|^2 \leq C$. Hence
\begin{align*}
 \mathbb{E} \Big| | \overline{U}_0^H - \overline{U}_0^K |^2 - |\overline{U}_0^{H'} - \overline{U}_0^{K'} |^2 \Big| &=  \mathbb{E} \Big| \langle \overline{U}_0^H - \overline{U}_0^{H'} - (\overline{U}_0^K- \overline{U}_0^{K'}) , \,  \overline{U}_0^H + \overline{U}_0^{H'} - (\overline{U}_0^K+ \overline{U}_0^{K'}) \rangle \Big| \nonumber\\
& \leq  C\,\Big(\EE| \overline{U}_0^H - \overline{U}_0^{H'}|^2\Big)^{\frac{1}{2}} + C\, \Big(\EE|\overline{U}_0^K- \overline{U}_0^{K'}|^2 \Big)^{\frac{1}{2}}  \nonumber\\
& \leq C\, (|H-H'| + |K-K'|).
\end{align*}
Hence it remains to bound $|V_t^{H,K}-V_{t'}^{H',K'}|$. Let $T \ge 1$, $\beta< 1$ and $\varepsilon\in (0,1)$. Let $p \in \mathbb{N}$ be even with
\begin{align*}
p > \frac{6}{\varepsilon \beta} \quad \text{and} \quad p > \frac{3}{\Big( 4(1-\max(\mathcal{H})) \wedge 1 \Big) (1-\beta)} .
\end{align*}
Apply Lemma \ref{lem:GRR} with $a=1$, $b=\beta$, $c=\eta \in (\frac{1}{p},\beta)$ and $d=2$ to get that
\begin{equation}\label{eq:GRRV}
\begin{split}
& \mathbb{E} \sup_{\substack{ t, t' \in [0,T] \\ H,H',K,K' \in \mathcal{H} \\ (t,H,K) \neq (t',H',K') } } \frac{| V_t^{H,K}-V_{t'}^{H',K'} |^p }{(|t-t'|^\eta \wedge |t-t'|+ \frac{1}{2}|H-H'|^\beta + \frac{1}{2} |K-K'|^\beta)^{p-\frac{6}{\beta}} } \\ & 
\leq C \int_{[0,T]^2\times\mathcal{H}^4} \frac{\mathbb{E}| V_s^{h,k}-V_{s'}^{h',k'} |^p}{(|s-s'|^\eta \wedge |s-s'|+ \frac{1}{2} |h-h'|^{\beta} + \frac{1}{2}|k-k'|^\beta )^{ p}} \diff h' \diff k' \diff h \diff k \diff s' \diff s  =: C \, A_{T} .
\end{split}
\end{equation}
Decompose $A_T$ as
\begin{align*}
A_T = I_1 + I_2 + I_3 \, ,
\end{align*}
where $I_{1}$ is the integral for $s'$ between $0$ and $(s-1)\vee 0$; $I_{2}$ for $s'$ between $(s-1)\vee 0$ and $(s+1)\wedge T$; and $I_{3}$ for $s'$ between $(s+1)\wedge T$ and $T$. 

\smallskip

For $I_1$, we have
\begin{align*}
I_1 & \leq C \int_1^T \int_0^{s-1} \int_{\mathcal{H}^4}\Big( \mathbb{E}|V_s^{h,k}|^p + \mathbb{E} | V_{s'}^{h',k'} |^p \Big) |s-s'|^{-\eta p}\,  \diff h' \diff k' \diff h \diff k \diff s' \diff s .
\end{align*}
In view of Lemma \ref{lem:young-V}, there is
\begin{align*}
I_1  \leq &\, C\,  \int_1^T \int_0^{s-1}  \Big( (\log(s+1)+1)^p (s+1)^{-\frac{p}{3} \left( 1 \wedge (4-4\max(\mathcal{H}))  \right) } \\ 
&+(\log(s'+1)+1)^p (s'+1)^{-\frac{p}{3} \left( 1 \wedge (4-4\max(\mathcal{H}))  \right) } \Big)  (s-s')^{-\eta p} \diff s' \diff s  .
\end{align*}
Proceeding with the same computations for $I_{3}$ and using that $\eta p > 1$, we get
\begin{align*}
I_1 + I_{3} \leq C\, \left( 1 + T^{-\frac{p}{3} \left( 1 \wedge (4-4\max(\mathcal{H})) \right)  + 1}  (\log(T+1)+1) ^p \right) .
\end{align*}
For $I_2$, we always have $|s-s'|^\eta \wedge |s-s'| = |s-s'|$. We use Lemma \ref{lem:ergodic-U2} to get
\begin{align*}
I_2 & \leq C\, \int_0^T \int_{(s-1)\vee 0}^{(s+1) \wedge T} \Big( (\log(s \wedge s'+1)+1)^p \\ 
& \quad \times (1+ s \wedge s')^{-\frac{p}{3} \left( 1 \wedge (4-4\max(\mathcal{H})) \right) (1-\beta)} + (1+s\wedge s')^{-p}\Big) \diff s' \diff s \\
& \leq C\, \left(1+ T^{-\frac{p}{3} \left( 1 \wedge (4-4\max(\mathcal{H})) \right) (1-\beta) + 1 } (\log(T+1)+1)^p + T^{-p+1} \right) .
\end{align*}
Since $p > \frac{3}{ \left( 4(1-\max(\mathcal{H})) \wedge 1 \right) (1-\beta)}$ and $\beta\in (0,1)$, 
\begin{align*}
-\frac{p}{3}\left(1 \wedge (4-4\max(\mathcal{H})) \right) +1  & < -\frac{p}{3} \left(1 \wedge (4-4\max(\mathcal{H} ) \right) (1-\beta) + 1 <0.
\end{align*}
Hence the powers of $T$ in the upper bounds on $I_1,I_2$ and $I_3$ are negative, thus $I_1,I_2$ and $I_3$ are bounded uniformly in $T$. In view of \eqref{eq:GRRV}, it follows that for any $T>0$,
\begin{align*}
 \mathbb{E} \sup_{\substack{ t, t' \in [0,T] \\ H,H',K,K' \in \mathcal{H} \\ (t,H,K) \neq (t',H',K') } } \frac{| V_t^{H,K}-V_{t'}^{H',K'} |^p}{(|t-t'| \wedge |t-t'|^\eta + |H-H'|^\beta + |K-K'|^\beta)^{p-\frac{6}{ \beta}} }  \leq C .
\end{align*} 
Thus by a monotone convergence argument, letting $T \to + \infty$, the random variable
\begin{align*}
\mathbf{C}_1 := \sup_{\substack{ t, t' \in \R_+ \\ H,H',K,K' \in \mathcal{H} \\ (t,H,K) \neq (t',H',K') } } \frac{| V_t^{H,K}-V_{t'}^{H',K'} | }{(|t-t'| \wedge |t-t'|^\eta + |H-H'|^\beta + |K-K'|^\beta)^{1-\frac{6}{p \beta}} } 
\end{align*}
has a finite moment of order $p$. 
Since $p> \frac{6}{\beta \varepsilon}$ and
\begin{align*}
(|t-t'| \wedge |t-t'|^\eta + |H-H'|^\beta + |K-K'|^\beta) \leq (2 + 1 \vee |t-t'|^\eta ),
\end{align*}
we deduce that
\begin{align*}
| V_t^{H,K}-V_{t'}^{H',K'} | & \leq \mathbf{C}_1 (|t-t'| \wedge |t-t'|^\eta + |H-H'|^\beta + |K-K'|^\beta)^{1-\varepsilon} \frac{(2+1\vee |t-t'|^\eta )^{1-\frac{6}{p\beta}}}{(2+1\vee |t-t'|^\eta)^{1-\varepsilon}} .
\end{align*}
Thus there exists $\mathbf{C}$ with a finite moment of order $p$ such that
\begin{align*}
| V_t^{H,K}-V_{t'}^{H',K'} | &\leq \mathbf{C} (1+|t-t'|^{\eta(\varepsilon-\frac{6}{p \beta})}) \left(|t-t'| \wedge |t-t'|^\eta + |H-H'|^\beta + |K-K'|^\beta \right)^{1-\varepsilon}\\
&\leq \mathbf{C} (1+|t-t'|^{\eta(1-\frac{6}{p \beta})}) \left(|t-t'| \wedge 1 + |H-H'|^\beta + |K-K'|^\beta \right)^{1-\varepsilon}.
\end{align*}
Now choosing $\eta = \frac{2}{p}$ (that satisfies the constraint $\eta\in(\frac{1}{p},\beta)$), we get $\eta(1-\frac{6}{p\beta}) \leq \frac{2}{p}< \frac{\beta\varepsilon}{3}<1-\beta(1-\varepsilon)$. Hence $(1+|t-t'|^{\eta(1-\frac{6}{p \beta})}) \leq C (1+|t-t'|^{1-\beta(1-\varepsilon)})$ and we get 
\begin{align*}
| V_t^{H,K}-V_{t'}^{H',K'} | &\leq \mathbf{C} (1+|t-t'|^{1-\beta(1-\varepsilon)}) \left(|t-t'| \wedge 1 + |H-H'|^\beta + |K-K'|^\beta \right)^{1-\varepsilon},
\end{align*}
and the result is obtained by replacing $\beta(1-\varepsilon)$ with $\beta$.
\end{proof}

Using the previous proposition, we deduce the main result of this section.

\begin{theorem}\label{th:ergodic-OU}
Let $\mathcal{H}$ be a compact subset of $(0,1)$. For each $H \in \mathcal{H}$, let $Y^{H}$ be the solution of \eqref{eq:SDE} with the drift $b$ satisfying \eqref{eq:drift-coerciv}. Let $ \beta \in  (0,1)$ and $p \ge 1$. There exists a random variable $\mathbf{C}$ with a finite moment of order $p$ such that almost surely, for all $t\ge 0$ and $H,H' \in \mathcal{H}$,
\begin{align*}%
\frac{1}{t+1} \int_0^{t+1} | Y_s^{H}-Y_s^{H'}|^2 \diff s  \leq \mathbf{C}\,   |H-H'|^{\beta} .
\end{align*}
\end{theorem}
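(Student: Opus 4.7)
The plan is to reduce the statement to a bound on the ergodic mean of the squared increments of the stationary fractional Ornstein-Uhlenbeck process, which is precisely what Proposition~\ref{prop:ergodic-OU} provides when specialised correctly.

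The starting point is the pointwise comparison inequality \eqref{eq:comparison-o-u} established in the proof of Theorem~\ref{thm:regularity-SDE}. Applied with $p=1$, and with the shorthand $\overline{\mathcal{U}}_s := \overline{U}_s^H - \overline{U}_s^{H'}$, it reads
\[
|Y_s^H - Y_s^{H'}|^2 \leq C\left(e^{-2s}|\overline{\mathcal{U}}_0|^2 + |\overline{\mathcal{U}}_s|^2 + \int_0^s e^{-\kappa(s-r)} |\overline{\mathcal{U}}_r|^2 \diff r\right).
\]
Integrating over $s\in[0,t+1]$, swapping the order of integration in the double integral so that $\int_0^{t+1}\!\int_0^s e^{-\kappa(s-r)}|\overline{\mathcal{U}}_r|^2\diff r\diff s \leq \kappa^{-1}\int_0^{t+1} |\overline{\mathcal{U}}_r|^2 \diff r$, and dividing by $t+1$, I would obtain
\[
\frac{1}{t+1}\int_0^{t+1} |Y_s^H - Y_s^{H'}|^2 \diff s \leq \frac{C}{t+1}\,|\overline{\mathcal{U}}_0|^2 + C'\,\frac{1}{t+1}\int_0^{t+1}|\overline{\mathcal{U}}_s|^2 \diff s.
\]

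The initial-condition term is handled by Proposition~\ref{thm:regularity-OU} applied at $t=t'=0$, which yields $|\overline{\mathcal{U}}_0| \leq \mathbf{C}_1\,|H-H'|^{1-\varepsilon}$ almost surely for a random variable $\mathbf{C}_1$ with a finite moment of order $2p$; choosing $\varepsilon$ so that $2(1-\varepsilon)\geq \beta$ and noting $(t+1)^{-1}\leq 1$ gives $(t+1)^{-1}|\overline{\mathcal{U}}_0|^2\leq \mathbf{C}_1^2\,|H-H'|^\beta$. For the second term, the key observation is to apply Proposition~\ref{prop:ergodic-OU} at $t'=t$ and $K=K'=H'$: the second ergodic mean on its left-hand side vanishes identically, while both $|t-t'|$ and $|K-K'|$ are zero, so the upper bound collapses to $\mathbf{C}_2\,|H-H'|^\beta$, giving
\[
\frac{1}{t+1}\int_0^{t+1}|\overline{U}_s^H - \overline{U}_s^{H'}|^2 \diff s \leq \mathbf{C}_2\,|H-H'|^\beta \qquad \text{a.s., uniformly in } t\ge 0.
\]
Setting $\mathbf{C} := C''(\mathbf{C}_1^2+\mathbf{C}_2)$ (which has a finite moment of order $p$ by H\"older's inequality, provided $\mathbf{C}_1$ and $\mathbf{C}_2$ are chosen with sufficiently high moments) assembles the two contributions into the announced estimate.

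The step requiring the most care is the specialisation of Proposition~\ref{prop:ergodic-OU}: it must be applied on the diagonal $t=t'$ with a coinciding second pair of parameters, so that both $1\wedge|t-t'|$ and $|K-K'|$ vanish and the $|H-H'|^\beta$ bound is genuinely uniform in time. Once this is granted, the dissipativity \eqref{eq:drift-coerciv} of $b$ does the rest through the comparison \eqref{eq:comparison-o-u}, in which the non-stationary contribution $|\overline{\mathcal{U}}_0|^2$ enters only with an exponentially decaying weight that is absorbed by the $(t+1)^{-1}$ factor from the ergodic averaging; no genuinely new long-time analysis is needed beyond what is already encoded in Proposition~\ref{prop:ergodic-OU}.
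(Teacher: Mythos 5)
Your proof is correct and follows essentially the same route as the paper: integrate the comparison inequality \eqref{eq:comparison-o-u} (with $p=1$) over $[0,t+1]$, absorb the convolution term by Fubini, control $|\overline{U}_0^H-\overline{U}_0^{H'}|$ via Proposition \ref{thm:regularity-OU} at $t=t'=0$, and apply Proposition \ref{prop:ergodic-OU} on the diagonal $t'=t$ with $K=K'=H'$ so that the bound collapses to $\mathbf{C}|H-H'|^\beta$. The only additions are bookkeeping the paper leaves implicit (the choice of $\varepsilon$ for the initial-condition term and the moment of the assembled constant), and both are handled correctly.
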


\begin{proof}
Integrating Inequality \eqref{eq:comparison-o-u} over $[0,t+1]$, we obtain
\begin{align*}
\frac{1}{t+1} &\int_0^{t+1} | Y_s^{H}-Y_s^{H'} |^{2} \diff s \\
&\leq \frac{C}{t+1}  \Big(|\overline{U}_0^H - \overline{U}_0^{H'} |^2 \int_{0}^{t+1} e^{-2s}\diff s + \int_0^{t+1} \big\{|\overline{U}_s^H - \overline{U}_s^{H'} |^2 + \int_{0}^s e^{-\kappa(s-r)} | \overline{U}_r^H - \overline{U}_r^{H'}|^2 \diff r\big\} \diff s \Big)\\
& \leq \frac{C}{t+1}  \Big(|\overline{U}_0^H - \overline{U}_0^{H'} |^2 + \int_0^{t+1} | \overline{U}_s^H - \overline{U}_s^{H'} |^2 \diff s \Big) .
\end{align*}
The regularity of the first term in the previous inequality is given by Proposition \ref{thm:regularity-OU} for $t=t'=0$, and the regularity of the second one is given by Proposition \ref{prop:ergodic-OU} with $H'=K=K'$ and $t'=t$.
\end{proof}

Let $\mathcal{W}$ denote the $2$-Wasserstein distance on probability measures of $\R^d$, defined  for two probability measures $\mu$ and $\nu$ by
\begin{align*}
\mathcal{W}(\mu, \nu) :=\inf \left\{\left(\mathbb{E}|X-Y|^2\right)^{\frac{1}{2}} ; \mathcal{L}(X)=\mu, \mathcal{L}(Y)=\nu\right\} .
\end{align*}
As a corollary of Theorem \ref{th:ergodic-OU}, we deduce the regularity w.r.t $H$ of the invariant measure $\mu_H$ associated to the SDE \eqref{eq:SDE}, in the $2$-Wasserstein distance.
\begin{corollary}\label{cor:reg-invariant}
Let $\mathcal{H}$ be a compact subset of $(0,1)$. For each $H \in \mathcal{H}$, let $\mu_{H}$ be the solution of \eqref{eq:SDE} with the drift $b$ satisfying \eqref{eq:drift-coerciv}. Let $ \beta \in  (0,1)$. There exists a constant $C$ with such that for any $H,H' \in \mathcal{H}$,
\begin{align}\label{eq:wasserstein}
\mathcal{W}(\mu_H, \mu_{H'} ) \leq C \,   |H-H'|^{\beta/2} .
\end{align}
\end{corollary}
\begin{proof}
Let $H$ and $H'$ in $\mathcal{H}$, then by the triangle inequality, for $t \ge 1$, we have
\begin{align*}
\mathcal{W}(\mu_H, \mu_{H'})  & \leq \mathcal{W} \left(\mu_H, \frac{1}{t+1} \int_0^{t+1} \delta_{Y_s^{H}} ds \right) + \mathcal{W}\left(\mu_{H'}, \frac{1}{t+1} \int_0^{t+1} \delta_{Y_s^{H'}} ds \right) \\ 
& \quad + \mathcal{W}\left( \frac{1}{t+1} \int_0^{t+1} \delta_{Y_s^{H}} ds, \frac{1}{t+1} \int_0^{t+1} \delta_{Y_s^{H'}} ds\right). 
\end{align*}
The integral $\int_0^t \delta_{Y_s} d s$ is to be understood as the random probability measure which associates to each Borel set $A$ the value $\int_0^t \delta_{Y_s}(A) ds$. By definition of the $2$-Wasserstein distance, it follows that
\begin{align*}
\mathcal{W}(\mu_H, \mu_{H'})  & \leq \mathcal{W}\left(\mu_H, \frac{1}{t+1} \int_0^{t+1} \delta_{Y_s^{H}} ds\right) + \mathcal{W}\left(\mu_{H'}, \frac{1}{t+1} \int_0^{t+1} \delta_{Y_s^{H'}} ds\right) \\ 
& \quad + \left( \frac{1}{t+1} \int_0^{t+1} |Y_s^{H} -Y_s^{H'} |^2 ds\right)^{1/2} . 
\end{align*} 
Using Theorem \ref{th:ergodic-OU}, we conclude that for any $\beta \in (0,1)$, there exists an integrable random variable $\mathbf{C}$ such that for any $t$,
\begin{align}\label{eq:wasserstein0}
\begin{split}
\mathcal{W}(\mu_H, \mu_{H'})  & \leq \mathcal{W}\left(\mu_H, \frac{1}{t+1} \int_0^{t+1} \delta_{Y_s^{H}} \right) + \mathcal{W}\left(\mu_{H'}, \frac{1}{t+1} \int_0^{t+1} \delta_{Y_s^{H'}} \right) \\ 
& \quad +\mathbf{C} | H-H'|^{\beta/2} . 
\end{split}
\end{align}
Moreover, by \cite[Proposition 3.3 $(i)$]{panloup2020general}, we have that almost surely
\begin{align*}
\lim_{t \rightarrow \infty }\mathcal{W}\left(\mu_H, \frac{1}{t+1} \int_0^{t+1} \delta_{Y_s^{H}}\right) = 0 ~\text{ and }~
\lim_{t \rightarrow \infty} \mathcal{W}\left(\mu_{H'}, \frac{1}{t+1} \int_0^{t+1} \delta_{Y_s^{H'}} \right) = 0 .
\end{align*}
Therefore, taking the limit as $t \rightarrow \infty$ in \eqref{eq:wasserstein0} first then taking the expectation, we deduce \eqref{eq:wasserstein}. 
\end{proof}

\section{Regularity of discrete-time fractional SDEs}\label{sec:application-discrete}

In the case of fractional noises, the invariant measure is rarely explicit and very few of its properties are known. One way to approximate it is by considering the Euler scheme associated to Equation~\eqref{eq:SDE}. This has practical applications, as mentioned in the paragraph \textbf{Statistical applications} of the Introduction. Indeed, proving the regularity in $H$ of the discrete ergodic means is useful in the study of ergodic estimators of the Hurst parameter. First we show that discrete ergodic means can be compared to the ergodic means of OU processes. Then we use the result of Section \ref{sec:application-ergodic} to conclude.

Let $\gamma \in (0,1)$ and consider the $\R^d$-valued discrete-time Stochastic Differential Equation:
\begin{align}\label{eq:SDE-discrete}
\forall t \ge 0, \ M^{H}_t & = M_0 + \int_0^{t} b(M^{H}_{s_\gamma}) \diff s + B_t^H,
\end{align}
where $b$ is a contracting drift which satisfies \eqref{eq:drift-coerciv} and where $s_\gamma := \gamma \lfloor \frac{s}{\gamma} \rfloor$ is the leftmost approximation of $s$ in the discretisation $\{k \gamma, k \in \mathbb{N}\}$. In this section, we present a result similar to Theorem \ref{th:ergodic-OU} for the process $M^H$. To that end, we first compare the process $M^H$ with the discrete-time Ornstein-Uhlenbeck process defined as
\begin{align*}%
\forall t \ge 0, \ M^{0,H}_t & = M_0 - \int_0^{t} M^{0,H}_{s_\gamma} \diff s + B_t^H .
\end{align*}
For $H,K$ in $\mathcal{H}$, we define the following processes
\begin{align*}
\mathcal{M}_t = M^H_t - M^{K}_t , \ \mathcal{M}^{0}_t =  M^{0,H}_t - M^{0,K}_t ~~\mbox{and recall}~~ \mathcal{U} = U^{H}-U^{K}, \  \overline{\mathcal{U}} = \overline{U}^{H}-\overline{U}^{K}.
\end{align*}
For the rest of this section, $C$ will denote a constant that can only depend on $p$, $\kappa$ and $K$.

\paragraph{Comparison with the discrete OU process.} First, notice that for $k  \ge 1$,
\begin{align*}
| \mathcal{M}_{k \gamma} - \mathcal{M}^0_{k \gamma} |^2 & =  | \mathcal{M}_{(k-1) \gamma} - \mathcal{M}^0_{(k-1) \gamma} |^2 + \gamma^2 |\mathcal{M}^0_{(k-1)\gamma} + b(M^H_{(k-1)\gamma})-b(M^K_{(k-1)\gamma})|^2\\ 
& \quad + 2 \gamma \langle \mathcal{M}_{(k-1)\gamma}-\mathcal{M}^0_{(k-1)\gamma}, \mathcal{M}^0_{(k-1)\gamma}+b(M^H_{(k-1)\gamma})-b(M^K_{(k-1)\gamma}) \rangle \\
& \leq |\mathcal{M}_{(k-1) \gamma} - \mathcal{M}^0_{(k-1) \gamma} |^2  + 2 \gamma^2 |\mathcal{M}^0_{(k-1)\gamma} |^2 + 2 \gamma^2 K^2 |\mathcal{M}_{(k-1)\gamma} |^2 \\
&\quad + 2 \gamma \langle \mathcal{M}_{(k-1)\gamma}-\mathcal{M}^0_{(k-1)\gamma}, \mathcal{M}^0_{(k-1)\gamma}+b(M^H_{(k-1)\gamma})-b(M^K_{(k-1)\gamma}) \rangle,
\end{align*}
where we used \eqref{eq:drift-coerciv} and Young's inequality. In order to treat the last term, we rewrite it as
\begin{align*}
&2\gamma\langle \mathcal{M}_{(k-1)\gamma}, b(M^H_{(k-1)\gamma})-b(M^K_{(k-1)\gamma})\rangle  + 2\gamma\langle \mathcal{M}_{(k-1)\gamma} , \mathcal{M}^0_{(k-1)\gamma}\rangle \\
&\quad - 2\gamma \langle \mathcal{M}^0_{(k-1)\gamma},  b(M^H_{(k-1)\gamma})-b(M^K_{(k-1)\gamma})\rangle -2\gamma | \mathcal{M}^0_{(k-1)\gamma} |^2 .
\end{align*}
Now we invoke \eqref{eq:drift-coerciv} and Young's inequality again to bound the previous quantity by
\begin{align*}
&2\gamma\left(- \kappa | \mathcal{M}_{(k-1) \gamma}  |^2 + \frac{\varepsilon}{2} | \mathcal{M}_{(k-1) \gamma} |^2 + \frac{1}{\varepsilon} | \mathcal{M}^0_{(k-1) \gamma} |^2 + K^2 \frac{\varepsilon}{2} | \mathcal{M}_{(k-1) \gamma} |^2\right) ,
\end{align*}
for some arbitrary $\varepsilon > 0$. Thus
\begin{align*}
| \mathcal{M}_{k \gamma} - \mathcal{M}^0_{k \gamma} |^2 & \leq |\mathcal{M}_{(k-1) \gamma} - \mathcal{M}^0_{(k-1) \gamma} |^2 +2\left(-\left( \kappa-\frac{\varepsilon(K^2+1)}{2}\right)\gamma + K^2 \gamma^2 \right) | \mathcal{M}_{(k-1) \gamma} |^2 \\ 
& \quad + (2+2\gamma^2 + \frac{2 \gamma}{\varepsilon}) | \mathcal{M}^0_{(k-1) \gamma} |^2 .
\end{align*}
We take $\varepsilon=\frac{\kappa}{(1+K^2)}$ to get 
\begin{align*}
| \mathcal{M}_{k \gamma} - \mathcal{M}^0_{k \gamma} |^2 & \leq  |\mathcal{M}_{(k-1) \gamma} - \mathcal{M}^0_{(k-1) \gamma} |^2+(-\gamma \kappa + 2 K^2 \gamma^2 ) | \mathcal{M}_{(k-1) \gamma}-\mathcal{M}^0_{(k-1) \gamma}  |^2 + C\,  | \mathcal{M}^0_{(k-1) \gamma} |^2 .
\end{align*}
Fix $\gamma_0 \in (0,1)$ such that if $\gamma < \gamma_0$, we have $0< \gamma\kappa -  2K^2 \gamma^2 <1$. There is
\begin{align*}
| \mathcal{M}_{k \gamma} - \mathcal{M}^0_{k \gamma} |^2 & \leq  \left( (1-\gamma \kappa + K^2 \gamma^2 ) | \mathcal{M}_{(k-1) \gamma} - \mathcal{M}^0_{(k-1) \gamma} |^2 + C  | \mathcal{M}^0_{(k-1) \gamma} |^2 \right) .
\end{align*}
By a direct induction, it comes that
\begin{align*}
| \mathcal{M}_{k \gamma} - \mathcal{M}^0_{k \gamma} |^2 & \leq C  \sum_{j=0}^{k-1} (1-\gamma \kappa + 2K^2 \gamma^2 )^{k-1-j}  | \mathcal{M}^0_{j \gamma} |^2 .
\end{align*}
It follows that for all $N \in \mathbb{N}^*$,
\begin{align*}
\frac{1}{N} \sum_{k=1}^N | \mathcal{M}_{k \gamma} |^2 \leq C \frac{1}{N} \sum_{k=1}^N \sum_{j=0}^{k-1} (1-\gamma \kappa + 2K^2 \gamma^2 )^{k-1-j}  | \mathcal{M}^0_{j \gamma} |^2 + \frac{1}{N} \sum_{k=1}^N |  \mathcal{M}^0_{k \gamma} |^2 .
\end{align*}
Summing over $k$ first and recalling that $\mathcal{M}^0_{0} =   \mathcal{M}_{0}$, we get
\begin{align} 
\frac{1}{N} \sum_{k=0}^N | \mathcal{M}_{k \gamma} |^2 & \leq C \frac{1}{N} \sum_{j=0}^N\frac{1- (1-\gamma \kappa+ 2 K^2 \gamma^2 )^{N-1-j}}{\gamma \kappa - 2 K^2 \gamma^2} | \mathcal{M}^0_{j \gamma} |^2 + \frac{1}{N} \sum_{k=0}^N |  \mathcal{M}^0_{k \gamma} |^2 \nonumber  \\
& \leq C \frac{1}{N} \sum_{k=0}^N |  \mathcal{M}^0_{k \gamma} |^2 . \label{eq:comp-M-discrete-ou}
\end{align}

\paragraph{Comparison of the discrete OU process and the OU process.}
Let us compare the two processes assuming they start at the same point and are driven by the same noises. 
First observe that
\begin{align*}
\mathcal{M}_{k \gamma}^0 - \mathcal{U}_{k \gamma} &= (1-\gamma)\mathcal{M}_{(k-1) \gamma}^0 - \left(\mathcal{U}_{(k-1) \gamma} - \int_{(k-1)\gamma}^{k \gamma} \mathcal{U}_{s }  \diff s \right)\\
&= (1-\gamma) (\mathcal{M}_{(k-1) \gamma}^0 - \mathcal{U}_{(k-1) \gamma} ) + \int_{(k-1)\gamma}^{k \gamma} ( \mathcal{U}_{s } - \mathcal{U}_{(k-1) \gamma}) \diff s .
\end{align*}
Using Young's and Jensen's inequalities we get
\begin{align*}
\left| \mathcal{M}_{k \gamma}^0 - \mathcal{U}_{k \gamma} \right|^2 & = \Big| (1-\gamma) ( \mathcal{M}_{(k-1) \gamma}^0 - \mathcal{U}_{(k-1) \gamma}  )  + \int_{(k-1)\gamma}^{k \gamma} ( \mathcal{U}_{s } - \mathcal{U}_{(k-1) \gamma}) \diff s \Big|^2 \\
& \leq 2 (1-\gamma)^2 \Big| \mathcal{M}_{(k-1) \gamma}^0 - \mathcal{U}_{(k-1) \gamma}\Big|^2  +4 \gamma \int_{(k-1)\gamma}^{k \gamma} | \mathcal{U}_{s }|^2  \diff s +  4 \gamma^2 |\mathcal{U}_{(k-1) \gamma} |^2  .
\end{align*}
It follows by induction that
\begin{align*}
\left|  \mathcal{M}_{k \gamma}^0 -\mathcal{U}_{k \gamma}\right|^2 & \leq  C \sum_{j=0}^{k-1} (1-\gamma)^{2(k-1-j)} \Big( \gamma \int_{j\gamma}^{(j+1) \gamma} | \mathcal{U}_{s}|^2  \diff s +  \gamma^2 |\mathcal{U}_{j \gamma}|^2 \Big) .
\end{align*}
Summing over $k$ between $1$ and $N \ge 1$, we have
\begin{align*}
\sum_{k=1}^N \left| \mathcal{M}_{k \gamma}^0- \mathcal{U}_{k \gamma}\right|^2 & \leq C  \sum_{k=1}^N\sum_{j=0}^{k-1} (1-\gamma)^{2(k-1-j)} \Big( \gamma \int_{j\gamma}^{(j+1) \gamma} | \mathcal{U}_{s}|^2  \diff s +  \gamma^2 |\mathcal{U}_{j \gamma}|^2 \Big) .
\end{align*}
Summing over $k$ first on the right-hand side, we get
\begin{align*}
\sum_{k=1}^N \left| \mathcal{M}_{k \gamma}^0- \mathcal{U}_{k \gamma}\right|^2 
& \leq C \sum_{j=0}^{N-1}  \frac{1}{\gamma}  \Big( \gamma \int_{j\gamma}^{(j+1) \gamma} | \mathcal{U}_{s }|^2  \diff s +  \gamma^2 |\mathcal{U}_{j \gamma}|^2 \Big)  
\leq  C \left(  \int_{0}^{N \gamma} | \mathcal{U}_{s}|^2  \diff s + \gamma \sum_{j=0}^{N-1}  | \mathcal{U}_{j \gamma}|^2 \right) .
\end{align*}
It follows that
\begin{align}\label{eq:comp-ou-discrete-ou}
\frac{1}{N} \sum_{k=1}^N \left|  \mathcal{M}_{k \gamma}^0- \mathcal{U}_{k \gamma}\right|^2 
& \leq  C\, \gamma \left (\frac{1}{N} \sum_{j=0}^{N-1}  |\mathcal{U}_{j \gamma}|^2 + \frac{1}{N \gamma} \int_{0}^{N \gamma} | \mathcal{U}_{s}|^2  \diff s \right) .
\end{align}
Recall that for all $t \ge 0$, we have $\mathcal{U}_{t} - \overline{\mathcal{U}_{t}} = e^{-t} \overline{\mathcal{U}_{0}}$. Therefore
\begin{align}\label{eq:dsicrete-ou-sou}
\frac{1}{N} \sum_{j=0}^{N-1}  | \mathcal{U}_{j \gamma} |^2  \leq C \left( \frac{1}{N} \sum_{j=0}^{N-1}  |\overline{\mathcal{U}_{j \gamma}}|^2  + | \overline{\mathcal{U}_{0}}|^2  \right) .
\end{align}
Finally, combining \eqref{eq:dsicrete-ou-sou} with \eqref{eq:comp-ou-discrete-ou} and \eqref{eq:comp-M-discrete-ou}, we conclude that
\begin{align}\label{eq:discrete-final-comp}
\frac{1}{N} \sum_{k=0}^N | M_{k \gamma}^H-M_{k \gamma}^{K} |^2 \leq C\, \Big( \frac{1}{N} \sum_{j=0}^{N-1}  |\overline{U}^H_{j\gamma} - \overline{U}^{K}_{j \gamma}|^2 + | \overline{U}_0^H - \overline{U}_0^{K} |^2 + \frac{1}{N \gamma} \int_{0}^{N \gamma} | U_s^H - U_s^{K}|^2  \diff s \Big) .
\end{align}
The regularity of the second term in the right-hand side is given by Proposition \ref{thm:regularity-OU} and the regularity of the third term is given by Theorem~\ref{th:ergodic-OU}. To bound the first term, we need a discrete version of Theorem~\ref{th:ergodic-OU}. We introduce the process
\begin{align}\label{eq:V-discret}
\mathcal{V}^{H,K}_t = \frac{1}{t+1} \int_0^{t+1} |\overline{U}^H_{s_\gamma} -\overline{U}^{K}_{s_\gamma}  |^2 \diff s - \mathbb{E}|\overline{U}_0^H-\overline{U}_0^{K}|^2 ,\quad  t \ge 0, \, H,K \in \mathcal{H}.
\end{align}
This process is continuous in time, $H$ and $H'$, so the idea is to use again a GRR argument. In Lemma \ref{lem:ergodic-U2-dsicrete}, we prove upper bounds on the increments of $\mathcal{V}_t^{H,K}$ that are similar to those proved in Lemma \ref{lem:ergodic-U2} (for its continuous-time version $V_t^{H,K}$). With those bounds in mind, the proof of the following Proposition is the same as that of Proposition \ref{prop:ergodic-OU}. 

Recall that $\gamma_{0} =\sup\{\gamma\in (0,1):~ \forall \xi \in (0,\gamma),\  0<\kappa \xi - 2 K^2 \xi^2 <1\}$.

\begin{prop}\label{thm:ergodic-OU-discret}
Let $\gamma \in (0, \gamma_0)$ and $\mathcal{H}$ be a compact subset of $(0,1)$. Let $ \beta \in  (0,1)$, and $p \ge 1$. There exists a random variable $\mathbf{C}$ with a finite moment of order $p$ such that almost surely, for all $t,t' \ge 0$ and all $H,H',K,K' \in \mathcal{H}$,
\begin{align*}%
| \mathcal{V}^{H,K}_t  -  \mathcal{V}^{H',K'}_{t'}|  \leq \mathbf{C} (1+|t'-t|^{1-\beta}) \left( 1 \wedge |t-t'|  + |H-H'| + |K-K'| \right)^\beta.
\end{align*}
\end{prop}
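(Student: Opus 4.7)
The plan is to mirror exactly the argument used for Proposition \ref{prop:ergodic-OU}, replacing the continuous ergodic mean $V_t^{H,K}$ by its discretized analog $\mathcal{V}_t^{H,K}$ and invoking Lemma \ref{lem:ergodic-U2-dsicrete} in place of Lemma \ref{lem:ergodic-U2}. The restriction $\gamma \in (0,\gamma_0)$ ensures that the comparison arguments underlying Lemma \ref{lem:ergodic-U2-dsicrete} are valid (in particular the geometric decay factor $1-\gamma\kappa + 2K^2\gamma^2 \in (0,1)$ is available).

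First I would fix $\beta \in (0,1)$, $\varepsilon \in (0,1)$, and choose an even integer $p$ large enough that
\[
p > \frac{6}{\varepsilon \beta} \quad\text{and}\quad p > \frac{3}{(4(1-\max(\mathcal{H})) \wedge 1)(1-\beta)},
\]
together with $\eta \in (1/p, \beta)$. Applying the multiparameter Garsia-Rodemich-Rumsey lemma (Lemma \ref{lem:GRR}) on the compact set $[0,T] \times \mathcal{H}^2$ with parameters $a=1$, $b=\beta$, $c=\eta$, $d=2$ yields
\[
\mathbb{E} \sup_{(t,H,K)\neq (t',H',K')} \frac{|\mathcal{V}_t^{H,K}-\mathcal{V}_{t'}^{H',K'}|^p}{\bigl(|t-t'|^\eta \wedge |t-t'| + \tfrac{1}{2}|H-H'|^\beta + \tfrac{1}{2}|K-K'|^\beta\bigr)^{p-6/\beta}} \leq C\,A_T,
\]
where $A_T$ is the double space-time integral of moment quotients. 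I would then split $A_T = I_1 + I_2 + I_3$ exactly as in the proof of Proposition \ref{prop:ergodic-OU}, according to whether $s' \in [0,(s-1)\vee 0]$, $s' \in [(s-1)\vee 0, (s+1)\wedge T]$, or $s' \in [(s+1)\wedge T, T]$.

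For $I_1$ and $I_3$, where $|s-s'| \geq 1$, I would bound the moment of $\mathcal{V}_s^{h,k} - \mathcal{V}_{s'}^{h',k'}$ by the triangle inequality and apply the bound on $\mathbb{E}|\mathcal{V}_s^{h,k}|^p$ provided by Lemma \ref{lem:ergodic-U2-dsicrete} (its discrete counterpart of the decay bound for $V$). Using $\eta p > 1$ to integrate $(s-s')^{-\eta p}$ over $[0,s-1]$ then gives a bound uniform in $T$, provided the exponent of decay coming from Lemma \ref{lem:ergodic-U2-dsicrete} is at least $\tfrac{p}{3}(1\wedge(4-4\max\mathcal{H}))$, which is precisely what was used in the continuous case. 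For $I_2$, where $|s-s'| \leq 1$, I would invoke the Hölder-type estimate of Lemma \ref{lem:ergodic-U2-dsicrete} on the increments $\mathcal{V}_s^{h,k}-\mathcal{V}_{s'}^{h',k'}$ with respect to $(s,h,k)\to (s',h',k')$; the integrability of the resulting bound in $s$ is secured by the second constraint imposed on $p$.

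Once the three integrals are bounded independently of $T$, monotone convergence produces a random variable
\[
\mathbf{C}_1 := \sup_{\substack{t,t'\ge 0\\ H,H',K,K'\in\mathcal{H}\\ (t,H,K)\neq (t',H',K')}} \frac{|\mathcal{V}_t^{H,K}-\mathcal{V}_{t'}^{H',K'}|}{\bigl(|t-t'|^\eta \wedge |t-t'| + |H-H'|^\beta + |K-K'|^\beta\bigr)^{1-6/(p\beta)}}
\]
with finite $p$-th moment. The final step is the cosmetic manipulation at the end of the proof of Proposition \ref{prop:ergodic-OU}: multiplying and dividing by $(2+1\vee|t-t'|^\eta)^{1-6/(p\beta)}$ and choosing $\eta = 2/p$ produces the factor $(1+|t-t'|^{1-\beta})$, and after substituting $\beta(1-\varepsilon)$ by $\beta$ one obtains the announced inequality. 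The main technical obstacle is purely the bookkeeping in Lemma \ref{lem:ergodic-U2-dsicrete}: one must verify that the discrete OU process exhibits the same polynomial decay in $s$ and the same Hölder behaviour in $(t,H,K)$ as in the continuous case, which is precisely where the assumption $\gamma < \gamma_0$ is used via the inequalities \eqref{eq:comp-M-discrete-ou}--\eqref{eq:discrete-final-comp} derived earlier in Section \ref{sec:application-discrete}.
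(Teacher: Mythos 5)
Your proposal matches the paper's approach exactly: the paper itself only remarks that, given the discrete bounds of Lemma \ref{lem:ergodic-U2-dsicrete}, the proof is identical to that of Proposition \ref{prop:ergodic-OU}, and that is precisely the argument you reproduce (same GRR application with $a=1$, $b=\beta$, $c=\eta$, $d=2$, same splitting into $I_1,I_2,I_3$, same monotone convergence and final rescaling). Two minor slips worth fixing: the decay of $\mathbb{E}|\mathcal{V}_s^{h,k}|^{2p}$ needed for $I_1$ and $I_3$ comes from Lemma \ref{lem:decreasing-variance-discret} rather than Lemma \ref{lem:ergodic-U2-dsicrete}, and the hypothesis $\gamma<\gamma_0$ plays no role in this proposition (the lemmas on $\mathcal{V}$ only require $\gamma\in(0,1)$; $\gamma_0$ enters only through the comparison \eqref{eq:comp-M-discrete-ou} used for Theorem \ref{cor:ergodic-OU-discret}).
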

In view of \eqref{eq:discrete-final-comp} and using Proposition \ref{thm:regularity-OU} and Proposition \ref{thm:ergodic-OU-discret} with $t=t'$ and $H'=K=K'$, we deduce the following Theorem.
\begin{theorem}\label{cor:ergodic-OU-discret}
 Let $\gamma \in (0, \gamma_0)$ and $\mathcal{H}$ be a compact subset of $(0,1)$. Let $ \beta \in  (0,1) $ and $p \ge 1$.  Recall that $M^H$ is defined in \eqref{eq:SDE-discrete}. There exists a random variable $\mathbf{C}$ with a finite moment of order $p$ such that almost surely, for all $N\in \N^*$ and $H,H' \in \mathcal{H}$,
\begin{align*}%
\frac{1}{N} \sum_{k=1}^{N} | M_{k \gamma}^{H}-M_{k \gamma}^{H'}|^2  \leq \mathbf{C}   |H-H'|^{\beta} .
\end{align*}
\end{theorem}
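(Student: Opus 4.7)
The plan is to combine the comparison inequality \eqref{eq:discrete-final-comp} already established above with the regularity results for the (stationary) fractional Ornstein--Uhlenbeck process. That inequality bounds $\frac{1}{N}\sum_{k=0}^{N} |M_{k\gamma}^H - M_{k\gamma}^{H'}|^2$ by a sum of three terms, so the proof reduces to showing that each of them is dominated almost surely by $\mathbf{C}\,|H-H'|^\beta$ uniformly in $N \in \N^*$, for some random variable $\mathbf{C}$ with finite $p$-th moment.

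The initial-data term $|\overline{U}_0^H - \overline{U}_0^{H'}|^2$ is handled directly by Proposition \ref{thm:regularity-OU} evaluated at $t=t'=0$: choosing $\varepsilon$ so small that $2(1-\varepsilon)\geq\beta$ and using the boundedness of $|H-H'|$ on the compact set $\mathcal{H}$ converts the resulting $|H-H'|^{2(1-\varepsilon)}$ into $C\,|H-H'|^\beta$. For the continuous ergodic term $\frac{1}{N\gamma}\int_0^{N\gamma}|U_s^H - U_s^{H'}|^2\diff s$, I observe that $b(x)=-x$ satisfies \eqref{eq:drift-coerciv}, hence the continuous fractional OU process $U^H$ coincides with the solution $Y^H$ of \eqref{eq:SDE} associated with this drift and initial data $Y_0$. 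Applying Theorem \ref{th:ergodic-OU} with this drift and taking $t+1 = N\gamma$ in the regime $N\gamma \geq 1$ produces the desired bound by $\mathbf{C}\,|H-H'|^\beta$; the complementary small-time range $N\gamma<1$ is treated pointwise using the identity $U_t^H - \overline{U}_t^H = e^{-t}(Y_0 - \overline{U}_0^H)$ together with Proposition \ref{thm:regularity-OU}.

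For the discrete ergodic term, note that $\overline{U}^H_{s_\gamma}$ is constant on each interval $[j\gamma,(j+1)\gamma)$, so that the integral defining $\mathcal{V}_t^{H,H'}$ in \eqref{eq:V-discret} is a genuine Riemann sum; choosing $t = N\gamma-1$ gives the identity
\begin{equation*}
\frac{1}{N}\sum_{j=0}^{N-1}|\overline{U}_{j\gamma}^H - \overline{U}_{j\gamma}^{H'}|^2 \;=\; \mathcal{V}_{N\gamma-1}^{H,H'} + \mathbb{E}|\overline{U}_0^H - \overline{U}_0^{H'}|^2.
\end{equation*}
Proposition \ref{thm:ergodic-OU-discret} applied with the specialisation $t'=t$, $K=H'$, $K'=H$ collapses $\mathcal{V}_{t'}^{H',K'}$ to $\mathcal{V}_t^{H,H}\equiv 0$ and yields $|\mathcal{V}_t^{H,H'}| \leq \mathbf{C}\,|H-H'|^\beta$ uniformly in $t\geq 0$; the expectation term is $O(|H-H'|^2)$ by Lemma \ref{lem:Ubar-increments}, and once again the regime $N\gamma<1$ is settled pointwise by Proposition \ref{thm:regularity-OU}. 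Combining the three bounds and setting $\mathbf{C}$ equal to a constant multiple of the sum of the random constants so obtained gives the theorem. The main obstacle is essentially bookkeeping: assembling the three ergodic/pointwise estimates with a single random constant having finite $p$-th moment, uniformly in $N$, and carefully handling the degenerate short-time regime $N\gamma<1$ in which the ergodic statements become vacuous.
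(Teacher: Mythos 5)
Your proposal is correct and follows essentially the same route as the paper: it starts from the comparison inequality \eqref{eq:discrete-final-comp} and controls the three terms via Proposition \ref{thm:regularity-OU} at $t=t'=0$, Theorem \ref{th:ergodic-OU} applied to the drift $b(x)=-x$, and Proposition \ref{thm:ergodic-OU-discret} specialised so that the second $\mathcal{V}$ term vanishes. Your explicit treatment of the degenerate regime $N\gamma<1$ (where the substitution $t+1=N\gamma$ is unavailable) is a welcome extra precaution that the paper leaves implicit; aside from a harmless slip in naming the specialisation of the parameters $(H',K,K')$, nothing is missing.
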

This result states that the ergodic means of the Euler scheme of \eqref{eq:SDE} are almost Lipschitz in the Hurst parameter. It is also a key element in the proof of \cite[Theorem 2.8]{HRstat}, which gives a rate of convergence of the estimator \eqref{eq:estimatorH}.

\appendix

\section{Long-time properties of the fractional Ornstein-Uhlenbeck process}\label{sec:app}
This section gathers results on the fractional Ornstein-Uhlenbeck process defined in \eqref{eq:o-u-def}, namely upper bounds on moments of their increments and on moments of their ergodic means.

\begin{lemma}\label{lem:Ubar-increments}
Recall that $\overline{U}^H_{t} = \int_{-\infty}^t e^{-(t-r)} \diff \fBm_r^H$. 
Let $p\ge 1$ and $\mathcal{H}$ be a compact subset of $(0,1)$. There exists a constant $C$ such that for all $H,H'$ in $\mathcal{H}$ and $t', t \ge 0$,
\begin{align*}
\mathbb{E} \big| \overline{U}_t^{H}-\overline{U}_{t'}^{H'} \big|^p \leq C \left( 1 \wedge |t-t'|^{\min(\mathcal{H})} + | H-H' | \right)^p .
\end{align*}
\end{lemma}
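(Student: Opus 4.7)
Since $\overline{U}^H_t - \overline{U}^{H'}_{t'}$ is a centred Gaussian variable, the equivalence of Gaussian moments reduces the claim to the case $p=2$: it suffices to show
\[
\mathbb{E}\bigl|\overline{U}^H_t - \overline{U}^{H'}_{t'}\bigr|^2 \leq C\bigl(1\wedge |t-t'|^{\min(\mathcal{H})} + |H-H'|\bigr)^2 .
\]
I would then insert the pivot $\overline{U}^{H'}_t$ and bound the time-increment piece and the Hurst-increment piece separately.

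The key observation for both pieces is joint stationarity: because $B$ is built in \eqref{eq:fBm} from a single two-sided Wiener measure which is shift-invariant, the random field $(\overline{U}^H_{t+\cdot})_{H\in\mathcal{H}}$ has the same law as $(\overline{U}^H_\cdot)_{H\in\mathcal{H}}$ for every $t\ge 0$. For the time piece, I would WLOG set $\Delta := t'-t \ge 0$ and use the defining SDE \eqref{eq:o-u-def} to write
\[
\overline{U}^{H'}_{t+\Delta}-\overline{U}^{H'}_t = -\int_t^{t+\Delta}\overline{U}^{H'}_s\,\diff s + B^{H'}_{t+\Delta}-B^{H'}_t .
\]
Cauchy--Schwarz on the integral plus the uniform bound $\sup_{s,H'\in\mathcal{H}}\mathbb{E}|\overline{U}^{H'}_s|^2<\infty$ (stationarity again) gives $\mathbb{E}|\overline{U}^{H'}_{t+\Delta}-\overline{U}^{H'}_t|^2 \leq C(\Delta^2+\Delta^{2H'})$ for $\Delta\leq 1$, and for $\Delta\geq 1$ a trivial triangle inequality gives a bound by $4\sup_{H'}\mathbb{E}|\overline{U}^{H'}_0|^2 \le C$. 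Together this yields $C(1\wedge \Delta^{2\min(\mathcal{H})})$.

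For the Hurst piece, stationarity reduces to $t=0$, and Itô--Wiener integration by parts (valid because $e^{-T}B^H_{-T}\to 0$ almost surely and in $L^2$) gives
\[
\overline{U}^H_0-\overline{U}^{H'}_0 = -\int_{-\infty}^0 e^{r}\bigl(B^H_r - B^{H'}_r\bigr)\diff r .
\]
Cauchy--Schwarz with the probability weight $e^r\diff r$ followed by Fubini yields $\mathbb{E}|\overline{U}^H_0-\overline{U}^{H'}_0|^2 \leq \int_{-\infty}^0 e^r\,\mathbb{E}|B^H_r - B^{H'}_r|^2\,\diff r$. The integrand is estimated by the negative-time analogue of Proposition \ref{prop:ub-justfBm} (the computation is unchanged under $t\mapsto -t$ because of the invariance $B^H_{-\cdot}\stackrel{\mathrm{law}}{=}-B^H_\cdot$ jointly in $H$), giving $\mathbb{E}|B^H_r-B^{H'}_r|^2 \leq C(1+|r|^{2\max(\mathcal{H})})(1+\log^2|r|)\,|H-H'|^2$. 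The exponential weight absorbs the polynomial and logarithmic growth, leaving $C|H-H'|^2$.

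Combining the two bounds by triangle inequality gives the $p=2$ estimate, and Gaussian moment equivalence finishes the proof. The only mildly delicate point is the justification of joint stationarity of the field $(\overline{U}^H)_{H\in\mathcal{H}}$ across $t$; everything else is bookkeeping on top of Proposition~\ref{prop:ub-justfBm}.
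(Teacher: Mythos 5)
Your proposal is correct and follows essentially the same route as the paper's proof: the same pivot $\overline{U}_t^{H'}$, the same SDE-plus-stationarity bound $C(1\wedge|t-t'|^{2\min(\mathcal{H})})$ for the time increment, and the same integration-by-parts reduction of the Hurst increment to Proposition~\ref{prop:ub-justfBm}, with the exponential weight absorbing the polynomial and logarithmic growth. The only cosmetic difference is that you reduce the Hurst piece to $t=0$ via joint stationarity and then invoke a negative-time version of Proposition~\ref{prop:ub-justfBm} (cf.\ Remark~\ref{rk:regB-}), whereas the paper stays at general $t$ and uses the increment stationarity \eqref{eq:laws} to land directly on a positive-time increment $B^H_{t-u}-B^{H'}_{t-u}$; both steps are justified.
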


\begin{proof}
Introducing the pivot term $\overline{U}_{t}^{H'}$, we have
\begin{align}\label{eq:UbarA1A2}
\mathbb{E} \big| \overline{U}_t^{H}-\overline{U}_{t'}^{H'} \big|^p & \leq C \left(  \mathbb{E} | \overline{U}_t^{H'}-\overline{U}_{t'}^{H'} |^p + \mathbb{E} | \overline{U}_t^{H}-\overline{U}_{t}^{H'} |^p \right) \nonumber\\
& =: C \left( A_1 + A_2 \right).
\end{align}

Let us start by analyzing $A_1$. Without loss of generality, assume that $t'\ge t$. Since the process $\overline{U}^{H'}$ is solution to \eqref{eq:o-u-def} and is stationary, we use Jensen's inequality to get
\begin{align*}
A_1 & = \mathbb{E} \Big| \int_t^{t'} \overline{U}_s^{H'} \diff s + B_{t'}^{H'}-B_{t}^{H'} \Big|^p\\
& \leq C\, |t'-t|^{p-1}   \int_t^{t'} \mathbb{E} |\overline{U}_0^{H'}|^p\, \diff s +C\, \mathbb{E}|B_t^{H'}-B_{t'}^{H'}|^p  \\
& \leq C \left( (t'-t)^p + (t'-t)^{p H'} \right)  .
\end{align*}
So we get $A_{1}\leq C\, |t'-t|^{p H'}$ when $|t'-t| \leq 1$. When $|t'-t| \ge 1$, we use the stationarity to get
\begin{align*}
A_1 & \leq C \, \mathbb{E}|\overline{U}_t^{H'}|^p + C\, \mathbb{E}|\overline{U}_t^{H'}|^p   \leq C .
\end{align*}
Overall we obtain
\begin{align}\label{eq:UbarA1}
A_1 \leq C (1 \wedge|t'-t|)^{p \min(\mathcal{H})}.
\end{align}

We now consider $A_2$. Recall that
\begin{align*}
\overline{U}_t^H - \overline{U}_t^{H'} =\int_{-\infty}^t e^{-(t-u)} \diff (\fBm_u^H - \fBm_{u}^{H'}) .
\end{align*}
Hence by integration-by-parts, in view of Remark \ref{rk:regB-}, we get
\begin{align*}
\overline{U}_t^H - \overline{U}_t^{H'} & = \fBm_t^H - \fBm_{t}^{H'} - \int_{-\infty}^t e^{-(t-u)} (\fBm_u^H - \fBm_{u}^{H'}) \diff u \\
& = \int_{-\infty}^t e^{-(t-u)} \left( B_t^H -B_u^H - B_t^{H'} + B_u^{H'} \right)\diff u .
\end{align*}
Therefore, using the previous equality and the Gaussian property of $\overline{U}_t^H - \overline{U}_t^{H'}$, we have
\begin{align*}
\mathbb{E} |\overline{U}_t^H - \overline{U}_t^{H'}|^p & \leq C \left(\mathbb{E} |\overline{U}_t^H - \overline{U}_t^{H'}| \right)^p \\
& \leq  C\left( \int_{-\infty}^t e^{-(t-u)} \sqrt{\mathbb{E}( B_t^H -B_u^H - B_t^{H'} + B_u^{H'} )^2} \diff u \right)^p\\
& = C \left( \int_{-\infty}^t e^{-(t-u)} \sqrt{\mathbb{E}( B_{t-u}^H - B_{t-u}^{H'} )^2} \diff u \right)^p,
\end{align*}
where the last equality comes from the joint increment stationarity of $(B^H_{t}, B^{H'}_{t})_{t\geq0}$, namely that for any $s \in \R$ and any $H,H'\in \mathcal{H}$, the laws of the following two-dimensional processes coincide:
\begin{align}\label{eq:laws}
\left( \fBm_{t+s}^H - \fBm_{t}^H, \fBm_{t+s}^{H'} - \fBm_s^{H'} \right)_{t\in \R} \overset{(d)}{=} \left( \fBm_t^H, \fBm_t^{H'} \right)_{t \in \R} .
\end{align}
This property will be proven in Appendix~\ref{app:proof-ub-justfBm-rectangular}.
We can now use Proposition \ref{prop:ub-justfBm} to conclude that
\begin{align}\label{eq:UbarA2}
A_2 \leq C |H-H'|^p.
\end{align}
Hence, using \eqref{eq:UbarA1} and \eqref{eq:UbarA2} in \eqref{eq:UbarA1A2} gives the result.
\end{proof}

\begin{lemma}\label{lem:Ubb-increments} 
Recall that the process $\overline{\mathbb{U}}$ is defined in \eqref{eq:Ubb-def}. Let $p \ge 1$ and $\mathcal{H}$ be a compact subset of $(0,1)$. There exists a constant $C$ such that for all $H,H' \in \mathcal{H}$ and $t'\geq t \ge 0$,
\begin{align*}
\mathbb{E} | \overline{\mathbb{U}}_t^H-\overline{\mathbb{U}}_{t'}^{H'} |^p \leq C  (1+t)^{-p\alpha} \left( 1 \wedge |t-t'|^{\min(\mathcal{H})} + |H-H'| \right)^p.
\end{align*}
\end{lemma}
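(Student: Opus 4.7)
The strategy is to reduce the claim to Lemma \ref{lem:Ubar-increments} by introducing a pivot that separates the effect of the weight $(1+t)^{-\alpha}$ from the increment of $\overline{U}$ itself. Writing
\begin{align*}
\overline{\mathbb{U}}_t^H - \overline{\mathbb{U}}_{t'}^{H'} = (1+t)^{-\alpha}\bigl(\overline{U}_t^H - \overline{U}_{t'}^{H'}\bigr) + \bigl((1+t)^{-\alpha} - (1+t')^{-\alpha}\bigr)\,\overline{U}_{t'}^{H'},
\end{align*}
the triangle inequality in $L^p$ reduces matters to estimating the two terms on the right-hand side separately.

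The first term is handled directly by Lemma \ref{lem:Ubar-increments}, which gives
\begin{align*}
\mathbb{E}\bigl|(1+t)^{-\alpha}(\overline{U}_t^H - \overline{U}_{t'}^{H'})\bigr|^p \leq C\,(1+t)^{-p\alpha}\bigl(1\wedge|t-t'|^{\min(\mathcal{H})} + |H-H'|\bigr)^p,
\end{align*}
which already has the required form.

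For the second term, stationarity of $\overline{U}^{H'}$ yields $\mathbb{E}|\overline{U}_{t'}^{H'}|^p = \mathbb{E}|\overline{U}_0^{H'}|^p \leq C$, uniformly in $t'$ and in $H'\in\mathcal{H}$. It then remains to control the deterministic factor $(1+t)^{-\alpha} - (1+t')^{-\alpha}$. Using the mean value theorem when $t'-t\leq 1$ gives $(1+t)^{-\alpha} - (1+t')^{-\alpha} \leq \alpha(t'-t)(1+t)^{-\alpha-1} \leq \alpha(1+t)^{-\alpha}(t'-t)^{\min(\mathcal{H})}$, since $(t'-t)\leq (t'-t)^{\min(\mathcal{H})}$ when $t'-t\leq 1$ and $\min(\mathcal{H})\in(0,1)$; when $t'-t>1$, the trivial bound $(1+t)^{-\alpha} - (1+t')^{-\alpha}\leq (1+t)^{-\alpha}$ does the job. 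In both cases we get
\begin{align*}
\bigl((1+t)^{-\alpha} - (1+t')^{-\alpha}\bigr)^p \leq C\,(1+t)^{-p\alpha}\bigl(1\wedge|t-t'|^{\min(\mathcal{H})}\bigr)^p,
\end{align*}
and combining the two contributions finishes the proof.

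There is no substantive obstacle here: the argument is a direct pivot-plus-stationarity estimate, and the small case analysis on $t'-t \lessgtr 1$ is essentially a specialisation of the inequality \eqref{eq:tinc} already used in the proof of Proposition \ref{prop:variance-ub}.
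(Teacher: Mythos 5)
Your proof is correct and follows essentially the same route as the paper's: a pivot decomposition isolating the weight difference $(1+t)^{-\alpha}-(1+t')^{-\alpha}$ from the increment of $\overline{U}$, combined with Lemma \ref{lem:Ubar-increments}, stationarity of $\overline{U}^{H'}$, and the elementary estimate on the weight difference (the case split at $t'-t=1$, as in \eqref{eq:tinc}). The only cosmetic difference is your choice of the single pivot $(1+t)^{-\alpha}\overline{U}_{t'}^{H'}$ where the paper uses two pivot terms, which changes nothing of substance.
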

\begin{proof}
Introducing the pivot terms $(1+t)^{-\alpha} U_{t}^{H'}$ and $(1+t')^{-\alpha} U_{t}^{H'}$, we have that
\begin{align*}
\mathbb{E} | \overline{\mathbb{U}}_t^H-\overline{\mathbb{U}}_{t'}^{H'} |^p & \leq C (1+t)^{-p \alpha} \left( \mathbb{E} |\overline{U}_t^H - \overline{U}_t^{H'}|^p + \mathbb{E}  |\overline{U}_t^{H'} - \overline{U}_{t'}^{H'}|^p \right) \\
& + \left( (1+t)^{-\alpha}-(1+t')^{-\alpha} \right)^{p} \mathbb{E} |\overline{U}_t^{H'}|^p.
\end{align*}
Using the inequality
\begin{align*}
\left((1+t)^{-\alpha}-(1+t')^{-\alpha} \right)^{p} \leq (1+t)^{-p\alpha} \left(1\wedge (t'-t)^p\right),
\end{align*}
the stationarity of $\overline{U}^{H'}$ and Lemma \ref{lem:Ubar-increments}, we get the desired result.
\end{proof}

\begin{lemma}\label{lem:cov-BH-BK}
Let $\mathcal{H}$ be a compact subset of $(0,1)$ and $B$ be a $1$-dimensional fBm. Then there exists a constant $C$ such that for any $H,K \in \mathcal{H}$ and any $-\infty \leq a \leq 0 \leq c \leq d < \infty$,
\begin{align*}
\Bigg| \mathbb{E} \int_a^0 e^u \diff B^H_u \int_c^d e^v \diff B^K_v + \mathbb{E} \int_a^0 e^u \diff B^K_u \int_c^d e^v \diff B^H_v  \Bigg| \leq C \int_a^0 e^u \Big( \int_c^d e^v (v-u)^{H+K-2} \diff v \Big) \diff u .
\end{align*}
\end{lemma}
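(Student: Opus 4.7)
Plan. My approach is to rewrite both stochastic integrals as Wiener integrals against the common white noise $W$ from the Mandelbrot--Van Ness representation~\eqref{eq:fBm}, apply the It\^o isometry, and identify the singular kernel $(v-u)^{H+K-2}$ explicitly. First, the pathwise identity $\int_a^0 e^u \diff B^H_u = -e^a B^H_a - \int_a^0 e^u B^H_u \diff u$ (valid since $B^H_0 = 0$), combined with the MvN kernel for $B^H$ and a stochastic Fubini, yields an $L^2(\R)$ kernel $\Phi_H$ with $\int_a^0 e^u \diff B^H_u = \Gamma(H+\tfrac12)^{-1}\int_\R \Phi_H(s)\,\diff W_s$; a further integration by parts in the $u$-variable inside the kernel gives the compact form
$$\Phi_H(s) = (H-\tfrac{1}{2})\int_a^0 e^u (u-s)_+^{H-3/2}\,\diff u,$$
and analogously $\Psi_K(s)=(K-\tfrac{1}{2})\int_c^d e^v(v-s)_+^{K-3/2}\diff v$ for the $K$-factor.

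Next, the It\^o isometry followed by Fubini produces
$$\mathbb{E}\left[\int_a^0 e^u \diff B^H_u \int_c^d e^v \diff B^K_v\right] = \frac{(H-\tfrac{1}{2})(K-\tfrac{1}{2})}{\Gamma(H+\tfrac{1}{2})\Gamma(K+\tfrac{1}{2})}\int_a^0\int_c^d e^{u+v}\left(\int_{-\infty}^u (u-s)^{H-3/2}(v-s)^{K-3/2}\diff s\right)\diff u\,\diff v,$$
since $u\le 0\le c\le v$ forces $u<v$ and restricts the $s$-support to $s\le u$. The substitution $s = u-(v-u)r$ reduces the inner $s$-integral to $(v-u)^{H+K-2}\,J(H,K)$, where $J(H,K) := \int_0^\infty r^{H-3/2}(1+r)^{K-3/2}\diff r$. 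Summing this identity with its $H\leftrightarrow K$ analog and taking absolute values then gives the claim, provided the scalar $\frac{(H-1/2)(K-1/2)}{\Gamma(H+1/2)\Gamma(K+1/2)}(J(H,K)+J(K,H))$ is bounded on $\mathcal{H}\times\mathcal{H}$.

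The main obstacle is rigor when $\min(H,K)\le 1/2$: then $(u-s)_+^{H-3/2}$ is not locally integrable near $u=s$ and $J(H,K)$ diverges at $r=0$. I would resolve this via the Beta-function identity $J(H,K) = B(H-\tfrac{1}{2},2-H-K) = \Gamma(H-\tfrac{1}{2})\Gamma(2-H-K)/\Gamma(3/2-K)$; combined with $(H-\tfrac{1}{2})\Gamma(H-\tfrac{1}{2}) = \Gamma(H+\tfrac{1}{2})$, this shows that $\tfrac{H-1/2}{\Gamma(H+1/2)}J(H,K)$ equals $\Gamma(2-H-K)/\Gamma(3/2-K)$, which extends continuously across $H=\tfrac{1}{2}$ and is bounded on the compact $\mathcal{H}\times\mathcal{H}$; the same applies to the symmetric term, while the kernel identity for $\Phi_H$ is first proved classically for $H>1/2$ and then extended to the full range $H\in(0,1)$ by analytic continuation in $H$.
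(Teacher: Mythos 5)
Your route is genuinely different from the paper's, and for $H,K>\tfrac12$ it is correct and in fact sharper: you compute each cross-expectation separately via the Wiener isometry and obtain an explicit constant, whereas the paper never controls the individual terms. The paper instead integrates by parts twice to write the \emph{symmetrized} sum as $\int_a^0\int_c^d e^{u+v}\,\partial^2_{uv}\mathbb{E}\big(B_u^HB_v^K+B_u^KB_v^H\big)\diff v\diff u$ and then uses increment stationarity and scaling, $\mathbb{E}[(B_v^H-B_u^H)(B_v^K-B_u^K)]=(v-u)^{H+K}\,\mathbb{E}[B_1^HB_1^K]$, so that the mixed derivative of the symmetrized covariance is exactly $(H+K)(H+K-1)(v-u)^{H+K-2}\,\mathbb{E}[B_1^HB_1^K]$: the symmetrization makes the ``diagonal'' terms $\mathbb{E} B_u^HB_u^K$ and $\mathbb{E} B_v^HB_v^K$ drop out under $\partial^2_{uv}$, and no singular kernel or analytic continuation is ever needed, for any $H,K\in(0,1)$. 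Your approach buys an exact non-symmetrized identity; what it costs is the delicate regime $\min(H,K)\le\tfrac12$, which the paper's trick avoids entirely.

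On that regime, one claim in your last sentence does not survive as stated: the pointwise kernel identity $\Phi_H(s)=(H-\tfrac12)\int_a^0e^u(u-s)_+^{H-3/2}\diff u$ cannot be ``extended by analytic continuation in $H$'' to $H\le\tfrac12$, because for $s\in(a,0)$ the right-hand side is then a divergent integral (exponent $H-\tfrac32\le-1$), not an analytic function. What does continue analytically is the final integrated identity: the equality of $\Gamma(H+\tfrac12)^{-1}\Gamma(K+\tfrac12)^{-1}\int_{\R}\Phi_H(s)\Psi_K(s)\diff s$ (with the honest, pre-integration-by-parts kernels, which are analytic in $H$ with locally uniform domination) and $\frac{\Gamma(2-H-K)}{\Gamma(K-\frac12)\Gamma(\frac32-K)}\int_a^0\int_c^de^{u+v}(v-u)^{H+K-2}\diff v\diff u$; both sides are analytic near $(0,1)^2$ and agree on $(\tfrac12,1)^2$, which closes the argument. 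With that restatement your Beta-function bookkeeping ($J(H,K)=B(H-\tfrac12,2-H-K)$ and $(H-\tfrac12)\Gamma(H-\tfrac12)=\Gamma(H+\tfrac12)$) correctly yields a constant bounded on $\mathcal{H}\times\mathcal{H}$, since $2-H-K$ stays away from the poles of $\Gamma$ and $1/\Gamma(K-\tfrac12)$ is entire. You should also record the Tonelli justification for swapping $\diff s$ with $\diff u\,\diff v$ (immediate from nonnegativity of the kernels when $H,K>\tfrac12$) and interpret the boundary term $-e^aB_a^H$ as $0$ when $a=-\infty$.
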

\begin{proof}
Assume first that $c=0$. By integration-by-parts, we get
\begin{align*}
\mathbb{E} \int_a^0 e^u \diff B^H_u \int_0^d e^v \diff B^K_v & = \mathbb{E} \left[ \Big( -e^a B^H_a -\int_a^0 e^u B^H_u \diff u \Big) \Big( e^d B^K_d- \int_0^d e^v B^K_v \diff v\Big) \right] \\
& = - e^{a+d}\, \mathbb{E}B_a^H B_d^K + e^{a} \int_0^d e^{v}\, \mathbb{E} B_a^H B_v^K \diff v - e^{d} \int_a^0 e^{u} \, \mathbb{E} B_u^H B_d^K \diff u \\ 
&\quad  + \int_a^0 e^{u} \int_0^d e^{v}\, \mathbb{E} B_u^H B_v^K \diff v \diff u .
\end{align*}
Integrating-by-parts with respect to $u$ yields
\begin{align*}
\mathbb{E} \int_a^0 e^u\, \diff B^H_u \int_0^d e^v \diff B^K_v = e^d \int_a^0 e^{u} \partial_u \mathbb{E} B_u^H B_d^K \diff u  - \int_a^0 e^u \int_0^d e^v\, \partial_u \mathbb{E} B_u^H B_v^K \diff v \diff u .
\end{align*}
After another integration-by-parts with respect to $v$, it now comes
\begin{align*}
\mathbb{E} \int_a^0 e^u\, \diff B^H_u \int_0^d e^v \diff B^K_v =  \int_a^0 e^u \int_0^d e^v\, \partial^2_{uv} \mathbb{E} B_u^H B_v^K \diff v \diff u .
\end{align*}
Hence one gets that
\begin{align}\label{eq:crossExp}
 \mathbb{E}\Big[ \int_a^0 e^u \diff B^H_u \int_0^d e^v \diff B^K_v + \int_a^0 e^u \diff B^K_u \int_0^d e^v \diff B^H_v  \Big] =\int_a^0 \int_0^d e^{u+v} \partial^2_{uv} \mathbb{E} \left( B_u^H B_v^K + B_u^K B_v^H \right) \diff v \diff u .
\end{align}
Now observe that
\begin{align*}
\mathbb{E} \left( B_u^H B_v^K + B_u^K B_v^H \right)  =- \mathbb{E} [(B_v^H-B_u^H) (B_v^K-B_u^K)] + \mathbb{E} B_u^H B_u^K  + \mathbb{E} B_v^H B_v^K .
\end{align*}
In view of the joint increment stationarity of $B$ (see \eqref{eq:laws}) and its integral representation \eqref{eq:fBm}, there is
\begin{align*}
\mathbb{E} &[ (B_v^H-B_u^H) (B_v^K-B_u^K)] \\
 &= \mathbb{E} B^H_{v-u} B^K_{v-u} \\
&= \frac{1}{\Gamma(H+\frac{1}{2}) \Gamma(K+\frac{1}{2})} \int_{\R} \left((v-u-s)_{+}^{H-\frac{1}{2}} - (-s)_{+}^{H-\frac{1}{2}} \right) \left((v-u-s)_{+}^{K-\frac{1}{2}} - (-s)_{+}^{K-\frac{1}{2}} \right) \diff s\\
&= (v-u)^{H+K}\, \mathbb{E} B^H_1 B^K_1,
\end{align*}
using the change of variables $\tilde{s}=\frac{s}{v-u}$ in the last equality. Since $\partial^2_{uv} \mathbb{E} B_u^H B_u^K = 0$, it follows that
\begin{align}\label{eq:boundDiffExp}
|\partial^2_{uv} \mathbb{E} \left( B_u^H B_v^K + B_u^K B_v^H \right) | \leq C (v-u)^{H+K-2} .
\end{align}
Plugging the previous inequality in \eqref{eq:crossExp} gives the result for $c=0$. 

For $c > 0$, we write
\begin{align*}
\mathbb{E} \int_a^0 e^u \diff B^H_u \int_c^d e^v \diff B^K_v  & = \mathbb{E} \int_a^0 e^u \diff B^H_u \int_0^d e^v \diff B^K_v  - \mathbb{E} \int_a^0 e^u \diff B^H_u \int_0^c e^v \diff B^K_v .
\end{align*}
And we use \eqref{eq:crossExp} to get
\begin{align*}
 \mathbb{E}&\Big[ \int_a^0 e^u \diff B^H_u \int_c^d e^v \diff B^K_v + \int_a^0 e^u \diff B^K_u \int_c^d e^v \diff B^H_v  \Big] \\
& = \int_a^0 e^u \int_0^d e^v\, \partial^2_{uv} \mathbb{E} \left(B_u^H B_v^K + B^K_{u} B^H_{v} \right) \diff v \diff u - \int_a^0 e^u \int_0^c e^v\, \partial^2_{uv} \mathbb{E} \left(B_u^H B_v^K+ B^K_{u} B^H_{v} \right) \diff v \diff u  \\
& =   \int_a^0 e^u \int_c^d e^v\, \partial^2_{uv} \mathbb{E} \left(B_u^H B_v^K + B^K_{u} B^H_{v}\right) \diff v \diff u .
\end{align*}
Using again \eqref{eq:boundDiffExp} in the previous equality yields the result.
\end{proof}

The following lemma is a generalisation of Theorem 2.3 in \cite{cheridito2003fractional}, in the sense that it allows for different Hurst parameters. 
\begin{lemma}\label{lem:asymptotic-cov}
Let $\mathcal{H}$ be a compact subset of $(0,1)$. 
For any $H,K\in (0,1)$, the process $(\overline{U}^H,\overline{U}^K)$ is stationary. Besides, there exists a contant $C$ such that for any $H,K \in \mathcal{H}$, any $t \ge 0$ and $s \geq0$,
\begin{align*}
\big|\mathbb{E} \big( \langle \overline{U}_{t}^H, \overline{U}_{t+s}^K\rangle +  \langle\overline{U}_{t+s}^H, \overline{U}_{t}^K\rangle \big)\big| = \big|\mathbb{E} \big( \langle \overline{U}_{0}^H, \overline{U}_{s}^K\rangle +  \langle\overline{U}_{s}^H, \overline{U}_{0}^K\rangle \big)\big|  \leq C  \left(1\wedge s^{2 \max(\mathcal{H})-2}\right) .
\end{align*}
\end{lemma}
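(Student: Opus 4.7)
Plan. The plan is to reduce to the scalar case and then exploit the semigroup identity for the stationary OU process. Since the components of $B^H$ are i.i.d., the inner product in $\R^d$ decomposes as $d$ copies of the same scalar covariance, so one may assume $d=1$. Joint stationarity of $(\overline{U}^H, \overline{U}^K)$ follows from the Wiener representation $\overline{U}_t^H = \int_{-\infty}^t e^{-(t-r)}\, \diff B_r^H$: the shift $t\mapsto t+h$ combined with the change of variables $r\mapsto r+h$ in the Wiener integrals leaves the joint law invariant, since $(B^H,B^K)$ is constructed from a single underlying Brownian motion $W$ and therefore has jointly stationary increments.

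For the scalar covariance bound, I would split $\overline{U}_s^K = e^{-s}\overline{U}_0^K + \int_0^s e^{-(s-v)} \diff B_v^K$ (and symmetrically with $H,K$ interchanged) to obtain
\begin{align*}
\EE\bigl[\overline{U}_0^H \overline{U}_s^K + \overline{U}_s^H \overline{U}_0^K\bigr]
= 2 e^{-s}\, \EE[\overline{U}_0^H \overline{U}_0^K] + e^{-s}\, \EE\Bigl[\int_{-\infty}^0 e^u \diff B_u^H \int_0^s e^v \diff B_v^K + \int_{-\infty}^0 e^u \diff B_u^K \int_0^s e^v \diff B_v^H\Bigr].
\end{align*}
The first term decays exponentially in $s$, faster than any polynomial. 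For the second, Lemma~\ref{lem:cov-BH-BK} with $a=-\infty$, $c=0$, $d=s$ reduces the task to estimating the deterministic integral
\begin{align*}
\Phi(s) := \int_{-\infty}^0 \int_0^s e^{u+v}\, (v-u)^{\alpha}\, \diff v\, \diff u, \qquad \alpha := H+K-2 \in (-2, 0).
\end{align*}

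After the substitution $x=-u$ and Fubini, the inner integral equals $\int_0^\infty e^{-x}(v+x)^\alpha \diff x = e^v\, \Gamma(\alpha+1,v)$, where $\Gamma(\cdot,\cdot)$ denotes the upper incomplete gamma function, so $\Phi(s) = \int_0^s e^{2v}\, \Gamma(\alpha+1,v)\, \diff v$. Near $v=0$ the integrand is locally integrable for every $\alpha\in(-2,0)$ (it is bounded for $\alpha>-1$, logarithmic for $\alpha=-1$, and of order $v^{\alpha+1}$ for $\alpha<-1$), which ensures $\Phi(s)$ is finite. For large $v$, the classical asymptotic $\Gamma(\alpha+1,v)\sim v^\alpha e^{-v}$ combined with a Laplace-type argument (or L'H\^opital applied to $\int_1^s e^{v} v^\alpha \diff v$) yields $\Phi(s) \leq C\, e^s\, s^\alpha$ for $s\geq 1$, so that $e^{-s}\Phi(s) \leq C\, s^{H+K-2} \leq C\, s^{2\max(\mathcal{H})-2}$, uniformly over $H,K\in\mathcal{H}$. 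For $s\in[0,1]$, Cauchy--Schwarz together with the uniform bound $\sup_{H\in\mathcal{H}} \EE|\overline{U}_0^H|^2 \leq C$ (a consequence of Lemma~\ref{lem:Ubar-increments} applied at $t=t'=0$) produces the constant bound, matching the factor $1$ in $1\wedge s^{2\max(\mathcal{H})-2}$ on $[0,1]$.

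The main obstacle is the uniformity of the constants in $H,K\in\mathcal{H}$, especially across the critical value $H+K=1$ where the small-$v$ behaviour of $\Gamma(\alpha+1,v)$ transitions between qualitatively different regimes; this is resolved by the three-regime integrability discussed above, together with continuity in $\alpha$ of the constant produced by the large-$v$ Laplace estimate on the compact parameter set $\mathcal{H}\times\mathcal{H}$.
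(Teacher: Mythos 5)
Your proof is correct and follows essentially the same route as the paper: stationarity via a change of variables in the Wiener-integral representation, the decomposition $\overline{U}_s^K = e^{-s}\overline{U}_0^K + e^{-s}\int_0^s e^v \diff B_v^K$, an application of Lemma~\ref{lem:cov-BH-BK}, and an estimate of the resulting deterministic integral of $e^{u+v}(v-u)^{H+K-2}$. The only difference is technical: the paper takes $c=1$ in Lemma~\ref{lem:cov-BH-BK} (absorbing the contribution near the corner $u=v=0$ into the $O(e^{-s})$ term) and evaluates the remaining integral by the substitution $y=v-u$, $z=-u$, whereas you take $c=0$ and control the corner directly through the incomplete gamma function; both yield the same uniform bound $C\,s^{H+K-2}$.
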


\begin{remark}\label{rk:dimension}
Recall that the entries of the vector-valued process $B$ are independent. Hence it suffices to prove the previous result component by component. To avoid heavy notations, we proceed to the following proof in dimension $d=1$.
\end{remark}

\begin{proof}
First, we have by integration-by-parts that
\begin{align*}
\mathbb{E} \big( \overline{U}_{t}^H \overline{U}_{t+s}^K \big) & = \mathbb{E} \int_{-\infty}^t e^{-(t-u)} \diff B_u^H \int_{-\infty}^{t+s} e^{-(t+s-v)} \diff B^K_v \\ & = \mathbb{E}\int_{-\infty}^t e^{-(t-u)} (B_t^H-B_u^H) \diff u \int_{-\infty}^{t+s} e^{-(t+s-v)} (B^{K}_{t+s}-B^K_v )\diff v  .
\end{align*}
Thus in view of \eqref{eq:laws}, 
\begin{align*}
\mathbb{E} \big( \overline{U}_{t}^H \overline{U}_{t+s}^K \big) = \mathbb{E}\int_{-\infty}^t e^{-(t-u)} B_{t-u}^H \diff u \int_{-\infty}^{t+s} e^{-(t+s-v)} B^{K}_{t+s-v}\diff v .
\end{align*}
By the changes of variables $\tilde{u}=u-t$ and $\tilde{v}=v-t$, it follows that
\begin{align*}
\mathbb{E} \big( \overline{U}_{t}^H \overline{U}_{t+s}^K \big)  = \mathbb{E}\int_{-\infty}^0 e^{\tilde{u}} B_{-\tilde{u}}^H \diff \tilde{u} \int_{-\infty}^{s} e^{-(s-\tilde{v})} B^{K}_{s-\tilde{v}}\diff \tilde{v} .
\end{align*}
Hence by integration-by-parts again, we find
\begin{align*}
\mathbb{E} \big( \overline{U}_{t}^H \overline{U}_{t+s}^K \big) = \mathbb{E}\int_{-\infty}^0 e^{u} \diff B_{u}^H \int_{-\infty}^{s} e^{-(s-v)} \diff B^{K}_{v} = \mathbb{E} \overline{U}_{0}^H \overline{U}_{s}^K ,
\end{align*}
which proves the claim that $(\overline{U}^H,\overline{U}^K)$ is stationary since $\overline{U}$ is Gaussian.

We now bound $\mathbb{E} \big( \overline{U}_{0}^H \overline{U}_{s}^K +  \overline{U}_{s}^H \overline{U}_{0}^K \big)$. When $s\in[0,1]$, this quantity is bounded. We now assume that $s\geq1$ and use the following decomposition:
\begin{align*}
\mathbb{E} \big( \overline{U}_{0}^H \overline{U}_{s}^K\big) &= 
\mathbb{E} \int_{-\infty}^0 e^u \diff B_u^H \int_{-\infty}^s e^{-(s-v)} \diff B^K_v \\
& = e^{-s} \, \mathbb{E} \int_{-\infty}^0 e^u \diff B_u^H \int_{-\infty}^1 e^{v} \diff B^K_v + e^{-s} \, \mathbb{E} \int_{-\infty}^0 e^u \diff B_u^H \int_{1}^s e^{v} \diff B^K_v ,
\end{align*}
and similarly for $\mathbb{E} \big(\overline{U}_{s}^H \overline{U}_{0}^K \big) $. 
The quantities $|\mathbb{E} \big[\int_{-\infty}^0 e^u \diff B_u^H \int_{-\infty}^1 e^{v} \diff B^K_v\big]| = |\EE \overline{U}^H_{0} \overline{U}^K_{1}|$ and $|\EE \overline{U}^H_{1} \overline{U}^K_{0}|$ do not depend on $s$ and are bounded uniformly in $H,K\in \mathcal{H}$. Thus by the previous remark and Lemma \ref{lem:cov-BH-BK} applied to $a=-\infty$, $c=1$ and $d=s$, it comes
\begin{align}\label{eq:boundExpStatU}
\big|\mathbb{E} \big( \overline{U}_{0}^H \overline{U}_{s}^K +  \overline{U}_{s}^H \overline{U}_{0}^K \big)\big| \leq C\, e^{-s}   + C\, e^{-s} \int_{-\infty}^0 e^u \int_1^s e^v (v-u)^{H+K-2} \diff v \diff u .
\end{align}
Use the changes of variables $y=v-u,\, z=-u$ and Fubini's theorem to get
\begin{align*}
\int_{-\infty}^0 e^u \int_1^s e^v (v-u)^{H+K-2} \diff v \diff u &= \int_{0}^\infty e^{-2z} \int_{1+z}^{s+z} e^y\, y^{H+K-2} \diff y \diff z\\
&= \int_{1}^\infty e^y\, y^{H+K-2} \int_{(y-s)\vee 0}^{y-1} e^{-2z} \diff z \diff y\\
&= \frac{1}{2}\int_{1}^\infty e^{y}\, y^{H+K-2} \left(e^{-2((y-s)\vee 0)}-e^{-2(y-1)}\right) \diff y .
\end{align*}
We now split the previous integral in three:
\begin{align*}
& \int_{-\infty}^0  e^u \int_1^s e^v (v-u)^{H+K-2} \diff v \diff u \\ &= \frac{1}{2}\int_{1}^{s/2} e^{y}\, y^{H+K-2} \left(1-e^{-2(y-1)}\right) \diff y  + \frac{1}{2}\int_{s/2}^{s} e^{y}\, y^{H+K-2} \left(1-e^{-2(y-1)}\right) \diff y \\ & \quad + \frac{1}{2}\int_{s}^\infty e^{-y}\, y^{H+K-2} \left(e^{2s}-e^{2}\right) \diff y \\
& \leq \frac{1}{2} e^{s/2} + \frac{1}{2} \left(\frac{s}{2}\right)^{H+K-2} e^s + \frac{1}{2} s^{H+K-2} \, e^s .
\end{align*}
Using the previous inequality in \eqref{eq:boundExpStatU} and the inequality $e^{-s/2} + e^{-s} \leq C s^{2\max(\mathcal{H})-2}$ for $s\geq 1$ gives the desired result.
\end{proof}
\begin{lemma}\label{lem:bound-I1-I2}
Let  $\mathcal{H}$ be a compact subset of $(0,1)$. There exists a constant $C$ such that for any $H,K \in \mathcal{H}$, any $t \ge 0$,
\begin{align*}
\int_{[0,t+1]^2} \left( \mathbb{E} \langle \overline{U}_{s_1}^H -\overline{U}_{s_1}^K , \overline{U}_{s_2}^H-\overline{U}_{s_2}^K \rangle \right)^2 
\diff s_1 \diff s_2 \leq C  (\log(t+1)+1)\, (t+1)^{(4 \max(\mathcal{H})-2 ) \vee 1} ,
\end{align*}
and
\begin{align*}
  \int_{[0,t+1]^3}   &\left( \mathbb{E} \langle \overline{U}_{s_1}^H-\overline{U}_{s_1}^K , \overline{U}_{s_2}^H-\overline{U}_{s_2}^K \rangle \right)^2 \left( \mathbb{E} \langle \overline{U}_{s_2}^H-\overline{U}_{s_2}^K  , \overline{U}_{s_3}^H-\overline{U}_{s_3}^K \rangle \right)^2   \diff s_1 \diff s_2 \diff s_3\\ 
  & \leq C   (\log(t+1)+1)^2\,  (t+1)^{(8 \max(\mathcal{H})-5) \vee 1}  .
\end{align*}
\end{lemma}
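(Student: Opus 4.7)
The key observation is that, thanks to the joint stationarity of $(\overline{U}^H,\overline{U}^K)$ established at the start of Lemma \ref{lem:asymptotic-cov}, the map
\begin{align*}
(s_1,s_2) \mapsto \mathbb{E} \langle \overline{U}_{s_1}^H - \overline{U}_{s_1}^K, \overline{U}_{s_2}^H - \overline{U}_{s_2}^K \rangle
\end{align*}
depends only on $u := |s_1-s_2|$; call this function $g(u)$. Hence by symmetry and the change of variable $u = s_2-s_1$ (followed by Fubini),
\begin{align*}
\int_{[0,t+1]^2} g(|s_1-s_2|)^2\, \diff s_1 \diff s_2 = 2 \int_0^{t+1} (t+1-u)\, g(u)^2 \diff u \leq 2(t+1) \int_0^{t+1} g(u)^2\, \diff u,
\end{align*}
and similarly, after integrating in $s_1$ and $s_3$ separately for fixed $s_2 \in [0,t+1]$,
\begin{align*}
\int_{[0,t+1]^3} g(|s_1-s_2|)^2 g(|s_2-s_3|)^2\, \diff s_1\diff s_2\diff s_3 \leq 4(t+1) \left( \int_0^{t+1} g(u)^2 \diff u \right)^2.
\end{align*}

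The next step is to derive the pointwise bound $|g(u)| \leq C(1 \wedge u^{2\max(\mathcal{H})-2})$. Expanding the inner product by bilinearity,
\begin{align*}
g(u) = \mathbb{E}\langle \overline{U}_0^H, \overline{U}_u^H\rangle + \mathbb{E}\langle \overline{U}_0^K, \overline{U}_u^K\rangle - \left( \mathbb{E}\langle \overline{U}_0^H, \overline{U}_u^K\rangle + \mathbb{E}\langle \overline{U}_0^K, \overline{U}_u^H\rangle \right).
\end{align*}
Applying Lemma \ref{lem:asymptotic-cov} with the pairs $(H,H)$, $(K,K)$ and $(H,K)$ separately (in the first two cases the stationarity of $\overline{U}^H$ makes the two summands equal), each of the three pieces is bounded by $C(1\wedge u^{2\max(\mathcal{H})-2})$, and the triangle inequality gives the claimed bound.

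Finally I compute $J(t) := \int_0^{t+1} g(u)^2\, \diff u$. Splitting the integral at $1$ gives $J(t) \leq C + C\int_1^{t+1} u^{4\max(\mathcal{H})-4}\, \diff u$, and a three-case discussion on whether $4\max(\mathcal{H})-4$ is less than, equal to, or greater than $-1$ yields
\begin{align*}
J(t) \leq C\, (\log(t+1)+1)\, \left( 1 \vee (t+1)^{4\max(\mathcal{H})-3}\right).
\end{align*}
Plugging this into the two one- and two-fold reductions above immediately yields the two bounds $(t+1)\,J(t) \lesssim (\log(t+1)+1)(t+1)^{(4\max(\mathcal{H})-2)\vee 1}$ and $(t+1)J(t)^2 \lesssim (\log(t+1)+1)^2 (t+1)^{(8\max(\mathcal{H})-5) \vee 1}$.

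The only delicate point is the pointwise estimate on $g$, where one must be careful that Lemma \ref{lem:asymptotic-cov} only controls the symmetrised cross-covariance rather than each term individually; the diagonal terms $\mathbb{E}\langle \overline{U}_0^H, \overline{U}_u^H\rangle$ must be estimated by applying that lemma with $K=H$, which halves the left-hand side and gives the needed individual control. Everything else is a routine application of Fubini and elementary integration.
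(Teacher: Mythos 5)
Your proof is correct and follows essentially the same route as the paper's: both reduce to the pointwise bound $|g(u)|\leq C(1\wedge u^{2\max(\mathcal{H})-2})$ obtained by expanding the inner product and applying Lemma \ref{lem:asymptotic-cov} to the pairs $(H,H)$, $(K,K)$ and $(H,K)$, then conclude by a change of variables and elementary one-dimensional integration with the case split at $\max(\mathcal{H})=3/4$. Your remark about the symmetrised cross-covariance is exactly the point the paper relies on implicitly, and factoring the computation through $J(t)$ rather than bounding the double and triple integrals directly is only a cosmetic difference.
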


\begin{proof}
Let us denote 
\begin{align*}
I_1 & := \int_{[0,t+1]^2} \left( \mathbb{E} \langle \overline{U}_{s_1}^H-\overline{U}_{s_1}^K , \overline{U}_{s_2}^H-\overline{U}_{s_2}^K \rangle \right)^2 \diff s_1 \diff s_2 \\
I_2 & := \int_{[0,t+1]^3}  \left( \mathbb{E} \langle \overline{U}_{s_1}^H-\overline{U}_{s_1}^K , \overline{U}_{s_2}^H-\overline{U}_{s_2}^K \rangle \right)^2 ~ \left( \mathbb{E} \langle  \overline{U}_{s_2}^H-\overline{U}_{s_2}^K , \overline{U}_{s_3}^H-\overline{U}_{s_3}^K \rangle \right)^2   \diff s_1 \diff s_2 \diff s_3 .
\end{align*}
Using Lemma \ref{lem:asymptotic-cov}, we have 
\begin{align*}
\left( \mathbb{E} \langle \overline{U}_{s_1}^H-\overline{U}_{s_1}^K , \overline{U}_{s_2}^H - \overline{U}_{s_2}^K \rangle \right)^2 & = \left( \mathbb{E} \langle \overline{U}_{s_1}^H, \overline{U}_{s_2}^H    \rangle - \mathbb{E}\langle \overline{U}_{s_1}^H , \overline{U}_{s_2}^K \rangle - \mathbb{E} \langle \overline{U}_{s_1}^K, \overline{U}_{s_2}^H \rangle + \mathbb{E} \langle \overline{U}_{s_1}^K, \overline{U}_{s_2}^K  \rangle  \right)^2 \\
& \leq C \left( 1 \wedge |s_1 - s_2 |^{4 \max(\mathcal{H})-4} \right) . 
\end{align*}
Therefore
\begin{align*}
I_1 \leq C  \int_{[0,t+1]^2} 1 \wedge |s_1-s_2|^{4 \max(\mathcal{H})-4} \diff s_1 \diff s_2 .
\end{align*}
The change of variables $x=s_1-s_2$ yields
\begin{align*}
I_1  & \leq C \int_{0}^{t+1} \int_{-s_2}^{t-s_2} 1 \wedge |x|^{4 \max(\mathcal{H})-4} \diff x \diff s_2  \nonumber \\
& \leq C\, (t+1)   \int_{-(t+1)}^{t+1} 1 \wedge |x|^{4 \max(\mathcal{H})-4} \diff x  \nonumber \\
& \leq C\, (t+1)\Big(   1+\int_{1}^{t+1}  x^{4 \max(\mathcal{H})-4} \diff x \Big) \nonumber \\
& \leq  C\, (t+1)  \left(  1+ \mathds{1}_{\max(\mathcal{H}) = \frac{3}{4}} (\log(t+1)+1)+ \mathds{1}_{\max(\mathcal{H}) \neq \frac{3}{4}}  (t+1)^{4 \max(\mathcal{H})-3} \right) \nonumber \\
& \leq C\, \Big( (t+1) (\log(t+1)+1) + (t+1)^{4 \max(\mathcal{H})-2} \Big).
\end{align*}
Using Lemma \ref{lem:asymptotic-cov} again, we have 
\begin{align*} 
I_2 \leq C  \int_{[0,t+1]^3} \left( 1 \wedge |s_1-s_2|^{4 \max(\mathcal{H})-4} \right) \left( 1 \wedge |s_2-s_3|^{4 \max(\mathcal{H})-4} \right)  \diff s_1 \diff s_2  \diff s_3 .
\end{align*}
The change of variables $x=s_1-s_2$, $y=s_2-s_3$ yields
\begin{align*} 
I_2 & \leq C  \int_{[-t-1,t+1]^3} \left( 1 \wedge |x|^{4 \max(\mathcal{H})-4} \right) \left( 1 \wedge |y|^{4 \max(\mathcal{H})-4} \right)  \diff x \diff y  \diff s_3  \\
& \leq C (t+1) \Big( \int_{[-t-1,t+1]} \left( 1 \wedge |x|^{4 \max(\mathcal{H})-4} \right)  \diff x \Big)^2 \\
& \leq C (t+1) \Big( (\log(t+1) +1) + (t+1)^{4 \max(\mathcal{H})-3} \Big)^2 \\
& \leq C \Big( (t+1) (\log(t+1) +1) ^2 + (t+1)^{8 \max(\mathcal{H})-5} \Big) .
\end{align*}
\end{proof}

The following Lemma states a Gaussian equality that allows us to bound the moments of the ergodic means.

\begin{lemma}\label{lem:gaussian-product}
Let $n \ge 2$ and $Z=(Z_1,...,Z_n)$ be a Gaussian vector with mean zero and $\EE Z_{i}^2 = \EE Z_{j}^2$ for any $i,j\in \{1,\dots, n\}$. Let  $P_{n}$ denote the set of partitions of $\{1,1,2,2,...,n,n\}$ into distinct ordered pairs $(i,j)$ (i.e. $i < j$) and $\widetilde{P}_n$ denote the set of partitions of $\{1,2,3,3,...,n,n\}$ into distinct ordered pairs. Then
\begin{align}
\mathbb{E} \Big[ \prod_{i=1}^n ( Z_i^2 - \mathbb{E} Z_i^2) \Big] = \sum_{p \in P_n} \alpha_{p,n} \prod_{\{i,j\} \in p} \mathbb{E} [Z_i Z_j]\label{eq:gaussian-product1} 
\end{align}
and
\begin{align}
\mathbb{E} \Big[ Z_1 Z_2 \prod_{i=3}^n (Z_i^2-\mathbb{E} Z_i^2) \Big] = \sum_{p \in \widetilde{P}_n} \beta_{p,n} \prod_{\{i,j\} \in p} \mathbb{E} [Z_i Z_j], \label{eq:gaussian-product2}
\end{align}
where $\{\alpha_{p,n} \}_{p \in P_n}$ and $\{\beta_{p,n} \}_{p \in \widetilde{P}_n}$ are constants independent of the law of $Z$.
\end{lemma}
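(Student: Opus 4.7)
The plan is to derive both identities from the Isserlis--Wick formula for moments of a centered Gaussian vector. Recall that for any centered Gaussian vector $(X_1,\ldots,X_{2m})$ one has $\mathbb{E}[X_1\cdots X_{2m}] = \sum_\pi \prod_{\{a,b\}\in\pi}\mathbb{E}[X_a X_b]$, where the sum is over all perfect matchings $\pi$ of $\{1,\ldots,2m\}$. Applied to the Gaussian vector obtained by duplicating each coordinate, $(Z_1,Z_1,Z_2,Z_2,\ldots,Z_n,Z_n)$, this expresses $\mathbb{E}[\prod_{i=1}^n Z_i^2]$ as a sum over perfect matchings of $2n$ labeled copies of the multiset $\{1,1,\ldots,n,n\}$. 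Grouping matchings by the (unlabeled) multiset-partition $p$ they induce yields $\mathbb{E}[\prod_{i=1}^n Z_i^2] = \sum_{p} c_{p,n}\prod_{\{i,j\}\in p}\mathbb{E}[Z_iZ_j]$, where $c_{p,n}$ is the number of labeled matchings producing $p$, a purely combinatorial quantity independent of the law of $Z$.

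For \eqref{eq:gaussian-product1}, I would expand
\begin{equation*}
\prod_{i=1}^n(Z_i^2 - \mathbb{E} Z_i^2) \;=\; \sum_{S\subseteq\{1,\ldots,n\}}(-1)^{|S|}\prod_{i\in S}\mathbb{E} Z_i^2 \, \prod_{i\notin S}Z_i^2 ,
\end{equation*}
take expectations, and apply the Isserlis formula to each $\mathbb{E}[\prod_{i\notin S}Z_i^2]$. Each matching of the copies indexed by $[n]\setminus S$ can be classified by its set $D\subseteq [n]\setminus S$ of diagonal pairs $\{i,i\}$, the remaining non-diagonal pairs forming a matching of the copies indexed by $[n]\setminus(S\cup D)$. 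Regrouping by $U := S\cup D$, the contributions of a given non-diagonal skeleton are weighted by $\prod_{i\in U}\mathbb{E} Z_i^2$ times $\sum_{S\subseteq U}(-1)^{|S|}$, which vanishes unless $U=\emptyset$. Hence only matchings without any diagonal pair survive; these are precisely the elements of $P_n$, and the combinatorial multiplicity $c_{p,n}$ becomes the coefficient $\alpha_{p,n}$.

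The second identity follows by the same mechanism. Isserlis applied to $\mathbb{E}[Z_1 Z_2 \prod_{i=3}^n Z_i^2]$ gives a sum over matchings of the multiset $\{1,2,3,3,\ldots,n,n\}$, and expanding $\prod_{i=3}^n(Z_i^2-\mathbb{E} Z_i^2)$ triggers the same inclusion--exclusion cancellation, now acting only on potential diagonals among indices $i\geq 3$. Since the indices $1$ and $2$ appear with multiplicity one, they cannot contribute a diagonal pair and are simply matched either to each other or to some copy of $Z_i$ with $i\geq 3$; the surviving matchings are exactly those indexed by $\widetilde P_n$, with combinatorial multiplicities $\beta_{p,n}$. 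The only delicate point in the argument is the bookkeeping that relates labeled matchings of duplicated copies to unlabeled multiset-partitions; I note in passing that the hypothesis $\mathbb{E} Z_i^2 = \mathbb{E} Z_j^2$ does not actually enter the combinatorial identity itself and is imposed only for convenience in the downstream applications of the lemma.
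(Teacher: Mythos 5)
Your proof is correct, but it follows a genuinely different route from the paper's. The paper proceeds by induction on $n$: the base case $n=2$ is handled with the Feynman diagram formula, and the inductive step uses the Gaussian integration-by-parts identity to peel off one factor at a time, reducing \eqref{eq:gaussian-product1} at rank $n+1$ to \eqref{eq:gaussian-product2} at rank $n+1$ and then to both identities at rank $n$. You instead apply the Isserlis--Wick theorem once to the duplicated vector $(Z_1,Z_1,\dots,Z_n,Z_n)$, expand the centred product by inclusion--exclusion over subsets $S$, and observe that after regrouping by $U=S\cup D$ (where $D$ is the set of diagonal pairs of a matching) the alternating sum $\sum_{S\subseteq U}(-1)^{|S|}$ annihilates every contribution except $U=\emptyset$; the surviving diagonal-free matchings are exactly the elements of $P_n$ (resp.\ $\widetilde P_n$), with coefficients equal to the number of labelled matchings inducing each multiset-partition. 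Your approach is non-inductive, identifies the coefficients $\alpha_{p,n}$, $\beta_{p,n}$ explicitly as combinatorial multiplicities, and correctly notes that the equal-variance hypothesis plays no role in the identity itself (it is also not used in the paper's inductive proof, only in the downstream application in Lemma \ref{lem:young-V}); the paper's induction avoids the labelled-versus-unlabelled bookkeeping at the cost of tracking how coefficients recombine across ranks. The one point you flag as delicate --- relating labelled matchings of the duplicated copies to elements of $P_n$ --- is handled correctly: the multiplicity of a given pair pattern depends only on the pattern, not on the law of $Z$, so the resulting coefficients are indeed universal constants. No gap.
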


\begin{remark}
Rigorously speaking, the set $\{1,1,2,2, \dots, n,n \}$ is to be understood as a set of $2n$ different elements, where each elements has the corresponding value ($1,1,2,2,\dots,n,n$). We formally identify an element with its value.
\end{remark}

\begin{proof}
The proof is by induction. We first handle the case $n=2$, for which \eqref{eq:gaussian-product2} trivially holds. As for \eqref{eq:gaussian-product1},
\begin{align*}
\mathbb{E} \Big[ (Z_1^2-\mathbb{E} Z_1^2)(Z_2^2- \mathbb{E} Z_2^2 ) \Big] = \mathbb{E} [Z_1^2 Z_2^2] - \mathbb{E} [Z_1^2] \mathbb{E} [Z_2^2] .
\end{align*}
By the Feynman diagram formula \cite[Theorem 1.28]{Janson}, we have 
\begin{align*}
\mathbb{E} [Z_1^2 Z_2^2]   = 2 \left( \mathbb{E}[Z_1 Z_2] \right)^2 + \mathbb{E}[Z_1^2] \mathbb{E} [Z_2^2] , 
\end{align*}
and therefore $\mathbb{E} [ (Z_1^2-\mathbb{E} Z_1^2)(Z_2- \mathbb{E} Z_2^2 ) ] = 2 \left( \mathbb{E}[Z_1 Z_2] \right)^2$.
Since $P_2 =\left\{ \{(1,2),(1,2) \} \right\}$, \eqref{eq:gaussian-product1} holds. 

~

Now let $n \geq 2$ and assume that \eqref{eq:gaussian-product1} and \eqref{eq:gaussian-product2} hold for any $m \leq n$ and any centred Gaussian vector of size $m$ such that $\EE Z_{i}^2 = \EE Z_{j}^2$ for any $i,j\in \{1,\dots, m\}$. 

First, let us prove that \eqref{eq:gaussian-product2} holds at rank $n+1$. Using the Gaussian integration-by-parts formula (see e.g. \cite[Lemma 2.1]{robert2008hydrodynamic}, applied to the  function $G(Z_2,...,Z_{n+1}):=Z_2  \prod_{i=3}^{n+1} ( Z_i^2-\EE Z_{i}^2)$), we get that
\begin{align}\label{eq:gaussianipp}
\mathbb{E} \Big[ Z_1 Z_2 \prod_{i=3}^{n+1} ( Z_i^2-\mathbb{E} Z_i^2) \Big] = 2 \sum_{m=3}^{n+1} \mathbb{E} [Z_1 Z_m] \mathbb{E} \Big[ Z_2 Z_m \prod_{\substack{k=3 \\ k \neq m }}^{n+1} (Z_k^2-\mathbb{E} Z_k^2 ) \Big] + \mathbb{E} [Z_1 Z_2] \mathbb{E} \Big[ \prod_{k=3}^{n+1} (Z_k^2-\mathbb{E} Z_k^2 ) \Big] .
\end{align}
For each $m \in \{3,...,n+1\}$, apply \eqref{eq:gaussian-product2} at rank $n$ to get that 
\begin{align*}
\mathbb{E} \Big[ Z_2 Z_m \prod_{\substack{k=3 \\ k \neq m }}^{n+1} (Z_k^2-\mathbb{E} Z_k^2 ) \Big] =  \sum_{p \in \widetilde{P}_n} \beta_{p,n} \prod_{\{i,j\} \in p} \mathbb{E} [\widetilde{Z}_i \widetilde{Z}_j] ,
\end{align*}
where $\widetilde{Z}_{1} = Z_{2}$, $\widetilde{Z}_{2} = Z_{m}$, $\widetilde{Z}_{i} = Z_{i}$ for $3\leq i<m$ and $\widetilde{Z}_{i} = Z_{i+1}$ for $m<i\leq n$. 
Moreover, applying \eqref{eq:gaussian-product1} on the term $\mathbb{E} \left[ \prod_{k=3}^{n+1} (Z_k^2-\mathbb{E} Z_k^2 ) \right]$ permits to conclude that the following equality holds:
\begin{align}\label{eq:induction2}
\mathbb{E} \Big[ Z_1 Z_2 \prod_{i=3}^{n+1} ( Z_i^2-\mathbb{E} Z_i^2) \Big] = \sum_{p \in \widetilde{P}_{n+1}} \beta_{p,n+1} \prod_{\{i,j\} \in p} \mathbb{E} [Z_i Z_j] ,
\end{align}
for some constants $\{\beta_{p,n+1}\}_{p \in \widetilde{P}_{n+1}}$. Note that the sum is indeed over $\widetilde{P}_{n+1}$, since in \eqref{eq:gaussianipp} the term $\mathbb{E} [Z_1 Z_2]$ appears only once in $\mathbb{E} [Z_1 Z_2] \mathbb{E}[ \prod_{k=3}^{n+1} (Z_k^2-\mathbb{E} Z_k^2 ) ]$ and covariances involving $Z_{1}$ and $Z_{2}$ also appear only once in $\mathbb{E} [Z_1 Z_m] \mathbb{E} [ Z_2 Z_m \prod_{k=3, k \neq m }^{n+1} (Z_k^2-\mathbb{E} Z_k^2 ) ]$. 

Finally, let us prove that \eqref{eq:gaussian-product1} holds at rank $n+1$.
Observe that
\begin{align*}
\mathbb{E} \Big[ \prod_{i=1}^{n+1} ( Z_i^2 - \mathbb{E} Z_i^2) \Bigg] = \mathbb{E} \Big[ Z_1 Z_1  \prod_{i=2}^{n+1} ( Z_i^2 - \mathbb{E} Z_i^2) \Big] - \mathbb{E} [Z_1^2]\, \mathbb{E} \Big[ \prod_{i=2}^{n+1} ( Z_i^2 - \mathbb{E} Z_i^2) \Big],
\end{align*}
then use again the Gaussian integration-by-parts (\cite[Lemma 2.1]{robert2008hydrodynamic} with the function $G(Z_1,...Z_{n+1})=Z_1 \prod_{i=2}^{n+1} \left( Z_i^2 - \EE Z_{i}^2\right)$) to get that
\begin{align*}
\mathbb{E} \Big[ Z_1^2  \prod_{i=2}^{n+1} ( Z_i^2 - \mathbb{E} Z_i^2) \Big] = 2 \sum_{m=2}^n  \mathbb{E} [Z_1 Z_m]\, \mathbb{E} \Big[ Z_1 Z_m \prod_{\substack{k=2 \\ k \neq m}}^{n+1} ( Z_i^2 - \mathbb{E} Z_i^2) \Big] + \mathbb{E} [Z_1^2]\, \mathbb{E}  \Big[ \prod_{i=2}^{n+1} ( Z_i^2 - \mathbb{E} Z_i^2) \Big] .
\end{align*}
Thus we have
\begin{align*}
\mathbb{E} \Big[ \prod_{i=1}^{n+1} ( Z_i^2 - \mathbb{E} Z_i^2) \Big] = 2 \sum_{m=2}^n  \mathbb{E} [Z_1 Z_m]\, \mathbb{E} \Big[ Z_1 Z_m \prod_{\substack{k=2 \\ k \neq m}}^{n+1} ( Z_i^2 - \mathbb{E} Z_i^2) \Big] .
\end{align*}
Apply now \eqref{eq:induction2} on each term $\mathbb{E} [ Z_1 Z_m \prod_{k=2, k \neq m}^{n+1} ( Z_i^2 - \mathbb{E} Z_i^2) ]$ to conclude that
\begin{align*}
\mathbb{E} \Big[ \prod_{i=1}^{n+1} ( Z_i^2 - \mathbb{E} Z_i^2) \Big] = \sum_{p \in P_{n+1}} \alpha_{p,n+1} \prod_{\{i,j\} \in p} \mathbb{E} [Z_i Z_j],
\end{align*}
for some constants $\{\alpha_{p,n+1}\}_{p \in P_{n+1}}$.
\end{proof}

Recall that the process $\{V_t^{H,K},~t\geq 0, (H,K)\in(0,1)^2\}$ was defined in \eqref{eq:defV}. In view of the stationarity of $\overline{U}$ (see Lemma \ref{lem:asymptotic-cov}), we have
\begin{align*}
V_t^{H,K} = \frac{1}{t+1}\int_0^{t+1} \left( | \overline{U}_s^H - \overline{U}_s^K |^2  - \mathbb{E} | \overline{U}_s^H - \overline{U}_s^K |^2 \right) \diff s .
\end{align*}

\begin{remark}\label{rmk:Pnasderangements}
Let $n \in \mathbb{N} \setminus \{0, 1\}$ and $\pi \in P_n$. We sort the elements of $\pi$ in the lexicographic order. We construct $\tilde{\pi}$, a permutation of $\{1,\dots,n\}$ as follows : 
\begin{itemize}
\item The first element of $\pi$ is of the form $(1,i_1)$ for some $i_1 \in \{ 2,\dots,n \} $. Then let $\tilde{\pi}(1)=i_1$. Besides $(1,i_1)$, there is a unique couple in $\pi$ where $i_1$ appears, which is of the form $(i_1,i_2)$ or $(i_2,i_1)$ with $i_2 \in \{1, \dots, n \} \setminus\{  i_1 \}$. Then let $\tilde{\pi}(i_1)=i_2$, and repeat the same process for $i_2$. 
 \item  Since every number appears exactly two times in $\pi$, the image of any $i_k$ cannot be $i_{k'}$ for $k' < k$ (otherwise $i_{k'}$ would appear three times in $\pi$). Therefore, the process ends when we set the pre-image of $1$.
 \item This way we have defined a cycle $c_1$ which is defined by the elements $1, i_1, i_2, \dots $
\item Then we pick the next number in $\pi$ that does not appear in the previous cycle, and we repeat the same process starting from this number.
\item Putting all the cycles together, we have obtained $\tilde{\pi}= c_1 ...c_N$ for some $N \ge 1$, the number of cycles in $\tilde{\pi}$.
\end{itemize}
Therefore, any element of $P_n$ can be seen as a derangement (a permutation without fixed point) where each pair represents a number and its image. We use this identification in the proof of Lemma \ref{lem:young-V}.
\end{remark}

\begin{lemma}\label{lem:young-V}
Let $\mathcal{H}$ be a compact subset of $(0,1)$. For any $p \in  \mathbb{N} \setminus \{0\}$, there exists a constant $C$ such that for any $t \ge 0$ and any $H,K \in \mathcal{H}$,
\begin{align*}
\mathbb{E} |V_t^{H,K}|^{2p} & \leq  C  \left( (t+1)^{ -\frac{2p}{3}\left( 1 \wedge (4-4 \max(\mathcal{H}))  \right) } \right)  (\log(t+1)+1)^{2p} .
\end{align*}
\end{lemma}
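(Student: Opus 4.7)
I would expand $\mathbb{E}|V_t^{H,K}|^{2p}$ as a $2p$-fold integral, apply the Gaussian identity of Lemma~\ref{lem:gaussian-product} to the integrand, and control the resulting sum of pair-covariance products using Lemma~\ref{lem:bound-I1-I2}.

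First, since the coordinates of $\overline{U}$ are independent (cf.\ Remark~\ref{rk:dimension}), the convexity bound $|x|^{2p}\leq d^{2p-1}\sum_{j=1}^{d}|x_j|^{2p}$ reduces the proof to the scalar case. Setting $Z_s := \overline{U}_s^H - \overline{U}_s^K \in \mathbb{R}$, Fubini gives
\begin{align*}
\mathbb{E}|V_t^{H,K}|^{2p} \leq \frac{C_{d,p}}{(t+1)^{2p}}\int_{[0,t+1]^{2p}} \Big|\mathbb{E}\prod_{i=1}^{2p}\big(Z_{s_i}^2 - \mathbb{E} Z_{s_i}^2\big)\Big|\, ds_1\cdots ds_{2p}.
\end{align*}
The stationarity of $\overline{U}$ established in Lemma~\ref{lem:asymptotic-cov} ensures that $\mathbb{E} Z_{s_i}^2$ does not depend on $s_i$, so Lemma~\ref{lem:gaussian-product} rewrites the integrand as a finite sum over pairings $\pi\in P_{2p}$ of products $\prod_{\{i,j\}\in\pi}\mathbb{E}[Z_{s_i}Z_{s_j}]$.

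By Remark~\ref{rmk:Pnasderangements}, each such $\pi$ identifies with a $2$-regular multigraph on $\{1,\dots,2p\}$ whose vertex-disjoint cycles of lengths $\ell_1,\dots,\ell_N\geq 2$ sum to $2p$. The corresponding $2p$-fold integral therefore factorises as $\prod_{k=1}^N J_{\ell_k}$, where $J_\ell := \int_{[0,t+1]^\ell}\prod_{j=1}^\ell|\mathbb{E}[Z_{s_j}Z_{s_{j+1\,\mathrm{mod}\,\ell}}]|\,ds$. For $\ell=2$ (a double edge), $J_2$ is, up to a constant, the integral $I_1$ of Lemma~\ref{lem:bound-I1-I2}; for $\ell\geq 3$, iterated Cauchy-Schwarz reduces $J_\ell$ to products of $I_1$ and $I_2$, yielding the normalised per-cycle bound $J_\ell/(t+1)^\ell \leq C\,(t+1)^{-\frac{\ell}{3}(1\wedge (4-4\max\mathcal{H}))}(\log(t+1)+1)^\ell$. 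Multiplying over cycles (using $\sum_k \ell_k = 2p$) and summing over the finitely many $\pi\in P_{2p}$ gives the announced estimate.

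The main difficulty is the last step: producing a uniform per-cycle bound of the form above. The guiding observation is that for a three-vertex chain bounded by $I_2$ in Lemma~\ref{lem:bound-I1-I2}, the normalised quantity $I_2/(t+1)^3$ decays like $(t+1)^{-2}$ in the low-$H$ regime, giving a per-vertex rate of $(t+1)^{-2/3}$, which is precisely the $2/3$ factor in the claimed exponent. Iterating Cauchy-Schwarz to decompose longer cycles into pairs and chains preserves this rate up to logarithmic losses, and additional care is required when $\max(\mathcal{H})$ approaches $1$, where each estimate in Lemma~\ref{lem:bound-I1-I2} degrades and the logarithmic factors accumulate into the $(\log(t+1)+1)^{2p}$ term.
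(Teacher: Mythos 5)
Your proposal is correct and follows essentially the same route as the paper: reduce to the scalar case, expand $\mathbb{E}|V_t^{H,K}|^{2p}$ via Fubini, apply Lemma~\ref{lem:gaussian-product}, decompose each pairing in $P_{2p}$ into cycles as in Remark~\ref{rmk:Pnasderangements}, and bound each cycle's factorised integral by powers of $I_1$ and $I_2$ from Lemma~\ref{lem:bound-I1-I2} (the paper splits the product of covariances along a cycle into two interleaved halves via Young's inequality rather than Cauchy--Schwarz, which is the same device). Your identification of the odd (three-vertex) cycles as the worst case producing the $2p/3$ exponent matches the paper's final counting argument ($M=p-N_\pi/2$ with $3N_\pi\leq 2p$).
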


\begin{proof}
As explained in Remark~\ref{rk:dimension}, it is enough to write the proof in dimension $d=1$. By Fubini's theorem, we have 
\begin{align*}
\mathbb{E } \big|V_t^{H,K}\big|^{2p} = \frac{1}{(t+1)^{2p}} \mathbb{E}  \int_{[0,t+1]^p}   \prod_{i=1}^{2p} \left( \big( \overline{U}_{s_i}^H - \overline{U}_{s_i}^K \big)^2  - \mathbb{E} \big( \overline{U}_{s_i}^H - \overline{U}_{s_i}^K \big)^2 \right) 
  \diff s_1 \dots \diff s_{2p}   . 
\end{align*}
Apply Lemma \ref{lem:gaussian-product} with $Z_i= \overline{U}_{s_i}^H - \overline{U}_{s_i}^K$ and set $C=\displaystyle \max_{\pi \in P_{2p}} \alpha_{\pi,2p}$ to get
\begin{align}\label{eq:pintegrals}
\mathbb{E } \big|V_t^{H,K}\big|^{2p} \leq C  \sum_{\pi \in P_{2p}} \frac{1}{(t+1)^{2p}} \int_{[0,t+1]^{2p}}  \prod_{(i,j) \in \pi} \left| \mathbb{E} \left[ ( \overline{U}_{s_i}^H - \overline{U}_{s_i}^K )  ( \overline{U}_{s_j}^H - \overline{U}_{s_j}^K ) \right]  \right| \diff s_1\dots \diff s_{2p}  .
\end{align}
Let $\pi \in P_{2p}$, by Remark \ref{rmk:Pnasderangements}, we can decompose $\pi$ as a product of cycles: $\pi  = \Pi_{\ell=1}^N c_{k_\ell}$, where $N$ is the total number of cycles and for each $\ell \in \llbracket 1, N\rrbracket$, $c_{k_\ell}$ is a cycle of length $k_\ell$. For such $\pi$, we write
\begin{equation}\label{eq:pintegrals-pi}
\begin{split}
& \int_{[0,t+1]^{2p}}  \prod_{(i,j) \in \pi} \left| \mathbb{E} [ ( \overline{U}_{s_i}^H - \overline{U}_{s_i}^K )  ( \overline{U}_{s_j}^H - \overline{U}_{s_j}^K ) ] \right| \diff s_1\dots \diff s_{2p} \\ & = \prod_{\ell=1}^N \int_{[0,t+1]^{k_{\ell}}} \prod_{(i,j) \in c_{k_\ell}}  \left| \mathbb{E} [ ( \overline{U}_{s_i}^H - \overline{U}_{s_i}^K )  ( \overline{U}_{s_{j}}^H - \overline{U}_{s_{j}}^K ) ]    \right|   \diff s_{i} . 
\end{split}
\end{equation}
Let $\ell \in \llbracket 1, N \rrbracket$. We will bound the integral in the right-hand side above depending on the value of $k_\ell$: First when $k_\ell=2$, then when $k_\ell > 2$ and  $k_\ell$ even and finally $k_\ell > 2$ and $k_\ell $ odd.
\paragraph{The case $k_\ell =2$.} In this case, $c_{k_\ell}$ is a transposition. Without any loss of generality, let us write $c_{k_\ell} = \{ (1,2) , (2,1) \}$. Then we have
\begin{align*}
\int_{[0,t+1]^{k_{\ell}}} \prod_{(i,j) \in c_{k_\ell}}  \left| \mathbb{E} \left[ ( \overline{U}_{s_i}^H - \overline{U}_{s_i}^K )  ( \overline{U}_{s_{j}}^H - \overline{U}_{s_{j}}^K ) \right] \right|   \diff s_{i} & = \int_{[0,t+1]^2} \left( \mathbb{E} [ ( \overline{U}_{s_1}^H - \overline{U}_{s_1}^K )  ( \overline{U}_{s_{2}}^H - \overline{U}_{s_{2}}^K ) ] \right)^2 \diff s_1 \diff s_2 
\end{align*}
Let us define $I_1 := \int_{[0,t+1]^2} \left( \mathbb{E} [ ( \overline{U}_{s_1}^H - \overline{U}_{s_1}^K )  ( \overline{U}_{s_{2}}^H - \overline{U}_{s_{2}}^K ) ] \right)^2 \diff s_1 \diff s_2 $. We have using Lemma \ref{lem:bound-I1-I2}
\begin{align}
\int_{[0,t+1]^{k_{\ell}}} \prod_{(i,j) \in c_{k_\ell}}  \left| \mathbb{E} \left[ ( \overline{U}_{s_i}^H - \overline{U}_{s_i}^K )  ( \overline{U}_{s_{j}}^H - \overline{U}_{s_{j}}^K ) \right] \right|   \diff s_{i} 
& \leq  C (\log(t+1)+1) (t+1)^{(4 \max(\mathcal{H})-2) \vee 1} . \label{eq:cas-k=2}
\end{align}

\paragraph{When $k_\ell > 2$ and $k_\ell $ even.} Without any loss of generality, let us write $$c_{k_\ell} = \{ (1,2), (2,3),...(k_\ell,1) \}.$$ Let us denote $\overline{\mathcal{U}}_{s} := \overline{U}_{s}^H - \overline{U}_{s}^K $. Using the notation $s_{k_\ell+1}= s_1$, we have
\begin{align*}
& \int_{[0,t+1]^{k_{\ell}}} \prod_{(i,j) \in c_{k_\ell}} \left| \mathbb{E} \left[ ( \overline{\mathcal{U}}_{s_i}^H - \overline{\mathcal{U}}_{s_i}^K )  ( \overline{\mathcal{U}}_{s_{j}}^H -\overline{\mathcal{U}}_{s_{j}}^K ) \right]  \right| \diff s_{i}  \\
& =  \int_{[0,t+1]^{k_{\ell}}} \left( \prod_{ \substack{i \in \llbracket 1, k_\ell \rrbracket \\   i \text{ odd }}} \left| \mathbb{E} \left[ \overline{\mathcal{U}}_{s_i} \overline{\mathcal{U}}_{s_{i+1}} \right] \right|  \right) \left( \prod_{\substack{i \in \llbracket 1, k_\ell \rrbracket \\ i \text{ even }}} \left| \mathbb{E} \left[ \overline{\mathcal{U}}_{s_i} \overline{\mathcal{U}}{s_{i+1}} \right] \right|  \right)  \prod_{i=1}^{k_\ell} \diff s_i .
\end{align*}
Then by Young's inequality, we get
\begin{align*}
& \int_{[0,t+1]^{k_{\ell}}} \prod_{(i,j) \in c_{k_\ell}} \left| \mathbb{E} [ (\overline{\mathcal{U}}_{s_i}^H -\overline{\mathcal{U}}_{s_i}^K )  ( \overline{\mathcal{U}}_{s_{j}}^H - \overline{\mathcal{U}}_{s_{j}}^K ) ]  \right| \prod_{(i,j) \in c_{k_\ell}} \diff s_{i}  \\ 
& \leq \frac{1}{2} \int_{[0,t+1]^{k_{\ell}}} \left( \prod_{ \substack{i \in \llbracket 1, k_\ell \rrbracket \\   i \text{ odd }}} \left( \mathbb{E} [ \overline{\mathcal{U}}_{s_i} \overline{\mathcal{U}}_{s_{i+1}}] \right)^2  \right)  \prod_{i=1}^{k_\ell} \diff s_i + \frac{1}{2}\int_{[0,t+1]^{k_{\ell}}} \left( \prod_{ \substack{i \in \llbracket 1, k_\ell \rrbracket \\   i \text{ even }}}  \left( \mathbb{E} [ \overline{\mathcal{U}}_{s_i} \overline{\mathcal{U}}_{s_{i+1}}] \right)^2  \right)  \prod_{i=1}^{k_\ell} \diff s_i \\
& = \frac{1}{2} I_1^{k_\ell/2} + \frac{1}{2} I_1^{k_\ell/2} = I_1^{k_\ell/2} .
\end{align*}
Applying Lemma \ref{lem:bound-I1-I2}, we get 
\begin{align}
& \int_{[0,t+1]^{k_{\ell}}} \prod_{(i,j) \in c_{k_\ell}} \left| \mathbb{E} \left[ ( \overline{U}_{s_i}^H - \overline{U}_{s_i}^K )  ( \overline{U}_{s_{j}}^H - \overline{U}_{s_{j}}^K ) \right]  \right|  \diff s_{i}  \nonumber \\ 
& \quad \leq C (\log(t+1)+1)^{k_\ell/2} (t+1)^{ ( (4 \max(\mathcal{H}) -2) \vee 1  ) k_\ell/2}.  \label{eq:cas-k-pair}
\end{align}
\paragraph{The case $k_\ell > 2$ and $k_\ell$ odd.} Without any loss of generality, let us write $$c_{k_\ell} = \{ (1,2), (2,3),..., (k_{\ell}-2,k_{\ell}-1), (k_{\ell}-1, k_\ell), (k_\ell,1) \}.$$ 
We write
\begin{align*}
& \int_{[0,t+1]^{k_{\ell}}} \prod_{(i,j) \in c_{k_\ell}} \left| \mathbb{E} \left[ ( \overline{U}_{s_i}^H - \overline{U}_{s_i}^K )  ( \overline{U}_{s_{j}}^H - \overline{U}_{s_{j}}^K ) \right]  \right|  \diff s_{i}  \\
& =  \int_{[0,t+1]^{k_{\ell}}} \left( \prod_{ \substack{i \in \llbracket 1, k_\ell-2 \rrbracket \\   i \text{ odd }}} \left| \mathbb{E} [ \overline{\mathcal{U}}_{s_i} \overline{\mathcal{U}}_{s_{i+1}}] \right|  \right) \left( \prod_{\substack{i \in \llbracket 1, k_\ell-2 \rrbracket \\ i \text{ even }}} \left| \mathbb{E} [ \overline{\mathcal{U}}_{s_i} \overline{\mathcal{U}}_{s_{i+1}}] \right|  \right) | \mathbb{E} [ \overline{\mathcal{U}}_{s_{k_\ell-1}} \overline{\mathcal{U}}_{s_{k_\ell}} ] \, \mathbb{E} [ \overline{\mathcal{U}}_{s_{k_\ell}} \overline{\mathcal{U}}_{s_1} ]  | \prod_{i=1}^{k_\ell} \diff s_i .
\end{align*}
Using again Young's inequality, we have
\begin{align*}
& \int_{[0,t+1]^{k_{\ell}}} \prod_{(i,j) \in c_{k_\ell}} \left| \mathbb{E} \left[ ( \overline{U}_{s_i}^H - \overline{U}_{s_i}^K )  ( \overline{U}_{s_{j}}^H - \overline{U}_{s_{j}}^K ) \right]  \right|  \diff s_{i}  \\ 
& \leq \frac{1}{2} \int_{[0,t+1]^{k_{\ell}}}  \left( \prod_{ \substack{i \in \llbracket 1, k_\ell-2 \rrbracket \\   i \text{ odd }}} \left( \mathbb{E} [ \overline{\mathcal{U}}_{s_i} \overline{\mathcal{U}}_{s_{i+1}}] \right)^2 \right) \prod_{i=1}^{k_\ell} s_i \\ & \quad + \frac{1}{2}  \int_{[0,t+1]^{k_\ell}} \left( \prod_{\substack{i \in \llbracket 1, k_\ell-2 \rrbracket \\ i \text{ even }}} \left( \mathbb{E} [ \overline{\mathcal{U}}_{s_i} \overline{\mathcal{U}}_{s_{i+1}}] \right)^2 \right)  \mathbb{E} [ \overline{\mathcal{U}}_{s_{k_\ell-1} \overline{\mathcal{U}}_{s_{k_\ell}}} ]^2 \,  \mathbb{E} [ \overline{\mathcal{U}}_{s_{k_\ell}} \overline{\mathcal{U}}_{s_1} ]^2  \prod_{i=1}^{k_\ell} \diff s_i  \\
& = \frac{1}{2} (t+1) I_1^{\lceil (k_\ell-2)/2 \rceil}  + \frac{1}{2} I_1^{\lceil (k_\ell-2)/2 \rceil -1} \int_{[0,t+1]^3}   \mathbb{E} [ \overline{\mathcal{U}}_{s_{k_\ell-1} \overline{\mathcal{U}}_{s_{k_\ell}}} ]^2 \mathbb{E} [ \overline{\mathcal{U}}_{s_{k_\ell}} \overline{\mathcal{U}}_{s_1} ]^2  \diff s_{k_\ell-1} \diff s_{k_\ell} \diff s_1 .
\end{align*}
Define $I_2 = \int_{[0,t+1]^3}   \mathbb{E} [ \overline{\mathcal{U}}_{s_{1}} \overline{\mathcal{U}}_{s_{2}} ]^2 \mathbb{E} [ \overline{\mathcal{U}}_{s_{2}} \overline{\mathcal{U}}_{s_3} ]^2  \diff s_{1} \diff s_{2} \diff s_3 $, then we have
\begin{align*}
& \int_{[0,t+1]^{k_{\ell}}} \prod_{(i,j) \in c_{k_\ell}} \left| \mathbb{E} \left[ ( \overline{U}_{s_i}^H - \overline{U}_{s_i}^K )  ( \overline{U}_{s_{j}}^H - \overline{U}_{s_{j}}^K ) \right]  \right| \diff s_{i}   \leq  \frac{1}{2} (t+1) I_1^{\lceil (k_\ell-2)/2 \rceil}  +  \frac{1}{2} I_1^{\lceil (k_\ell-2)/2 \rceil -1} I_2  . 
\end{align*}
Denote $K_\ell = \lceil (k_\ell-2)/2 \rceil$. Applying Lemma \ref{lem:bound-I1-I2}, we get that
\begin{align*}
& \int_{[0,t+1]^{k_{\ell}}} \prod_{(i,j) \in c_{k_\ell}} \left| \mathbb{E} \left[ ( \overline{U}_{s_i}^H - \overline{U}_{s_i}^K )  ( \overline{U}_{s_{j}}^H - \overline{U}_{s_{j}}^K ) \right]  \right| \diff s_{i}  \\ 
& \leq C (t+1)   (\log(t+1)+1)^{K_\ell} (t+1)^{( (4\max(\mathcal{H})-2)\vee 1 ) K_\ell}   +   (\log(t+1)+1)^{K_\ell-1}  (t+1)^{( (4\max(\mathcal{H})-2) \vee 1 ) (K_\ell-1) }  \\ & \quad \times   (\log(t+1) + 1)^{2}  (t+1)^{(8 \max(\mathcal{H})-5) \vee 1 }    \\
& \leq C  (\log(t+1)+1)^{K_\ell} (t+1)^{( (4\max(\mathcal{H})-2)\vee 1 ) K_\ell + 1}   \\ & \quad + (\log(t+1)+1)^{K_\ell+1}  (t+1)^{( (4\max(\mathcal{H})-2) \vee 1 ) (K_\ell-1) + (8 \max(\mathcal{H})-5) \vee 1 } .
\end{align*}
To simplify the previous bound, let us prove upper bounds on the different powers of $(t+1)$ that appear. If $\max(\mathcal{H}) > 3/4 > 5/8$, since $K_\ell \ge 1$, we always have
\begin{align*}
\left( (4\max(\mathcal{H})-2) \vee 1 \right) (K_\ell-1) + \left( (8 \max(\mathcal{H})-5) \vee 1 \right) & =(4\max(\mathcal{H})-2)  (K_\ell -1) + (8 \max(\mathcal{H})-5) 
\\ & \leq (4\max(\mathcal{H})-2)  K_\ell + 1 \\ & = \left( (4\max(\mathcal{H})-2) \vee 1 \right) K_\ell+1.
\end{align*}
 If $5/8 \leq  \max(\mathcal{H}) \leq 3/4$, since $8 \max(\mathcal{H}) - 6  \leq 1$, we have
\begin{align*}
\left( (4\max(\mathcal{H})-2) \vee 1 \right) (K_\ell-1) + \left( (8 \max(\mathcal{H})-5) \vee 1 \right) & = (K_\ell -1)  + (8 \max(\mathcal{H})-5) \\
& = K_\ell + 8 \max(\mathcal{H}) - 6 
\\ & \leq  K_\ell  + 1 = \left( (4\max(\mathcal{H})-2) \vee 1 \right) + 1 .
\end{align*}
Finally, if $\max(\mathcal{H}) < 5/8$ then 
\begin{align*}
\left( (4\max(\mathcal{H})-2) \vee 1 \right) (K_\ell-1) + \left( (8 \max(\mathcal{H})-5) \vee 1 \right) & = (K_\ell -1)  + 1 \\
& \leq K_\ell + 1 = \left( (4\max(\mathcal{H})-2) \vee 1 \right) K_\ell+1 .
\end{align*}
Therefore,
\begin{align}
& \int_{[0,t+1]^{k_{\ell}}} \prod_{(i,j) \in c_{k_\ell}} \left| \mathbb{E} \left[ ( \overline{U}_{s_i}^H - \overline{U}_{s_i}^K )  ( \overline{U}_{s_{j}}^H - \overline{U}_{s_{j}}^K ) \right]  \right|  \diff s_{i} \nonumber  \\ 
& \leq C   (t+1)^{( (4\max(\mathcal{H})-2) \vee 1 ) K_\ell + 1}  (\log(t+1)+1)^{K_\ell+1}  \nonumber \\
& \leq C (t+1)   \left( (t+1)^{( (4\max(\mathcal{H})-2) \vee 1 ) } \right)^{\lceil (k_\ell-2)/2 \rceil} (\log(t+1)+1)^{\lceil (k_\ell-2)/2 \rceil+1} . \label{eq:cas-k-impair}
\end{align}
Define 
\begin{align*}
M & := \sum_{k_\ell = 2} 1 +\sum_{k_\ell>2, k_\ell \text{ even}}  k_\ell/2  +\sum_{k_\ell>2, k_\ell \text{ odd}}  \lceil (k_\ell-2)/2 \rceil \\
\alpha_{\mathcal{H}} & :=  (4\max(\mathcal{H})-2) \vee 1  \\
N_{\pi} & := \sum_{k_\ell>2, k_\ell \text{ odd}} 1 .
\end{align*}
Plugging \eqref{eq:cas-k=2}, \eqref{eq:cas-k-pair}, \eqref{eq:cas-k-impair} in \eqref{eq:pintegrals-pi}, we have
\begin{align*}
& \int_{[0,t+1]^{2p}}  \prod_{(i,j) \in \pi} \left| \mathbb{E} \left[ ( \overline{U}_{s_i}^H - \overline{U}_{s_i}^K )  ( \overline{U}_{s_j}^H - \overline{U}_{s_j}^K ) \right] \right| \diff s_1\dots \diff s_{2p} \\ & 
\quad \leq C  (t+1)^{N_{\pi}} \left( (t+1)^{\alpha_{\mathcal{H}}} \right)^{M}  (\log(t+1)+1)^{M+N_{\pi}}.
\end{align*}
Since $2p = k_1+ \dots + k_N$ and $\lceil (k_\ell-2)/2 \rceil =  (k_\ell-1)/2$ for $k_\ell >2$ and $k_\ell$ odd, we get $M = p- N_{\pi}/2$. Hence,
\begin{align*}
& \frac{1}{(t+1)^{2p}}\int_{[0,t+1]^{2p}}  \prod_{(i,j) \in \pi} \left| \mathbb{E} \left[ ( \overline{U}_{s_i}^H - \overline{U}_{s_i}^K )  ( \overline{U}_{s_j}^H - \overline{U}_{s_j}^K ) \right] \right| \diff s_1\dots \diff s_{2p} \\ & 
\quad \leq  C   (t+1)^{-2p}  (t+1)^{N_{\pi}} \left( (t+1)^{\alpha_{\mathcal{H}} } \right)^{p-N_{\pi}/2} (\log(t+1)+1)^{p + N_{\pi}/2 } \\
 & \quad  \leq  C   (t+1)^{\alpha_{\mathcal{H}}  p + (2-\alpha_{\mathcal{H}} )N_{\pi}/2 -2p}   (\log(t+1)+1)^{p + N_{\pi}/2} .
\end{align*}
Since $2-\alpha_{\mathcal{H}}  \ge 0$ and $3 N_{\pi} \leq 2 p$, we have $(2-\alpha_{\mathcal{H}} )N_{\pi}/2 \leq  (2-\alpha_{\mathcal{H}} )p/3 $. Using this in the previous bound, we get
\begin{align*}
& \frac{1}{(t+1)^{2p}}\int_{[0,t+1]^{2p}}  \prod_{(i,j) \in \pi} \left| \mathbb{E} \left[ ( \overline{U}_{s_i}^H - \overline{U}_{s_i}^K )  ( \overline{U}_{s_j}^H - \overline{U}_{s_j}^K ) \right] \right| \diff s_1\dots \diff s_{2p} \\ 
& \quad \leq C  \left( (t+1)^{ (2 \alpha_{\mathcal{H}} - 4) \frac{p}{3}} \right)  (\log(t+1)+1)^{p+\frac{p}{6}} ,
\end{align*}
which holds for any $\pi \in P_{2p}$. Plugging the previous bound in \eqref{eq:pintegrals} and noticing that $2 \alpha_{\mathcal{H}}-4 = -2 \vee (8 \max(\mathcal{H}) -8)$, concludes the proof.
\end{proof}

\begin{lemma}\label{lem:ergodic-U2}
Let $\mathcal{H}$ be a compact subset of $(0,1)$, $p \in \mathbb{N} \setminus \{0\}$ and $ \beta \in (0,1)$. There exists a constant $C$ such that for all $H,H', K, K'$ in $\mathcal{H}$ and $t' \ge t \ge 0$,
\begin{align*}
\mathbb{E} \Big| V_t^{H,K} - V_{t'}^{H',K'} \Big|^{2p} &   \leq C  \left( (t+1)^{ -\frac{2p}{3} \left(1 \wedge  (4-4 \max(\mathcal{H}) \right) (1-\beta)} \right)  (\log(t+1)+1)^{2p} \\ 
&\quad \times (|H-H'|^{2p \beta}+|K-K'|^{2p\beta}) +  C\, (t+1)^{-2p} |t-t'|^{2p} .
\end{align*}
\end{lemma}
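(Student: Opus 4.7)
The plan is to use the pivot $V_t^{H',K'}$, writing
\begin{align*}
|V_t^{H,K}-V_{t'}^{H',K'}|^{2p} \leq 2^{2p-1} \bigl( |V_t^{H,K}-V_t^{H',K'}|^{2p} + |V_t^{H',K'}-V_{t'}^{H',K'}|^{2p}\bigr),
\end{align*}
and to handle the Hurst-type and the time-type increments separately.

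For the time increment I would assume $t' \geq t$ and use the identity
\begin{align*}
V_{t'}^{H',K'}-V_t^{H',K'} = \frac{1}{t'+1}\int_{t+1}^{t'+1}|\overline{U}_s^{H'}-\overline{U}_s^{K'}|^2\,\diff s - \frac{t'-t}{(t+1)(t'+1)}\int_0^{t+1}|\overline{U}_s^{H'}-\overline{U}_s^{K'}|^2\,\diff s.
\end{align*}
Taking the $2p$-th power, applying Jensen's inequality to each of the two normalised integrals, and using the stationarity from Lemma \ref{lem:asymptotic-cov} together with the uniform bound $\mathbb{E}|\overline{U}_s^{H'}-\overline{U}_s^{K'}|^{4p}\leq C$ coming from Lemma \ref{lem:Ubar-increments}, I would obtain
\begin{align*}
\mathbb{E}|V_{t'}^{H',K'}-V_t^{H',K'}|^{2p} \leq C\left(\frac{t'-t}{t'+1}\right)^{2p}\leq C\,(t+1)^{-2p}|t-t'|^{2p},
\end{align*}
which is exactly the second term of the statement.

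For the Hurst part, the plan is to derive two bounds on $D := \mathbb{E}|V_t^{H,K}-V_t^{H',K'}|^{2p}$ and then interpolate. The first follows from the triangle inequality and Lemma \ref{lem:young-V}:
\begin{align*}
D \leq A := C\,(t+1)^{-\frac{2p}{3}(1\wedge (4-4\max\mathcal{H}))}(\log(t+1)+1)^{2p}.
\end{align*}
The second uses the algebraic identity $|x|^2-|y|^2=\langle x-y,x+y\rangle$ with $x=\overline{U}_s^H-\overline{U}_s^K$ and $y=\overline{U}_s^{H'}-\overline{U}_s^{K'}$; combining Cauchy-Schwarz in the inner product, Jensen's inequality in $s$, and Lemma \ref{lem:Ubar-increments} applied to $(\overline{U}_s^H-\overline{U}_s^{H'})-(\overline{U}_s^K-\overline{U}_s^{K'})$ gives
\begin{align*}
D \leq B := C\,(|H-H'|^{2p}+|K-K'|^{2p}).
\end{align*}
Since both $A$ and $B$ majorise $D$, the elementary inequality $\min(A,B)\leq A^{1-\beta}B^{\beta}$ yields $D \leq A^{1-\beta}B^{\beta}$ for every $\beta\in[0,1]$; the subadditivity $(a+b)^{\beta}\leq a^{\beta}+b^{\beta}$ and the crude estimate $(\log(t+1)+1)^{2p(1-\beta)}\leq (\log(t+1)+1)^{2p}$ then deliver exactly the Hurst contribution in the statement.

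The substantive work lives entirely in the two lemmas invoked: Lemma \ref{lem:young-V} (itself built on the Gaussian-product combinatorics of Lemma \ref{lem:gaussian-product}) supplies the decay in $t$, while Lemma \ref{lem:Ubar-increments} gives the Lipschitz regularity in $H$. The argument itself is a pivot-plus-interpolation scheme, and the only ``trick'' is to combine the two bounds on $D$ multiplicatively rather than additively, trading a fraction $1-\beta$ of time decay for a fraction $\beta$ of Hurst regularity; I do not expect any serious obstacle beyond being careful with the Jensen and Cauchy-Schwarz applications when going from scalar to the vector-valued $\overline{U}$.
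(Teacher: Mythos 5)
Your proposal is correct and follows essentially the same route as the paper's proof: the same pivot decomposition into a Hurst increment at fixed time and a time increment at fixed Hurst parameters, the same inner-product identity with Cauchy--Schwarz and Lemma~\ref{lem:Ubar-increments} giving the Lipschitz-in-$(H,K)$ bound, the same interpolation against Lemma~\ref{lem:young-V}, and the same two-term decomposition with Jensen's inequality and stationarity for the time increment. No gaps.
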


\begin{proof}
In view of \eqref{eq:defV}, we have
\begin{align*}
& \mathbb{E} \Big| V_t^{H,K} - V_t^{H',K'} \Big|^{2p}  \\ & \quad  \leq  C\mathbb{E} \Big| \frac{1}{t+1} \int_0^{t+1} | \overline{U}_s^H -\overline{U}_s^K |^2-| \overline{U}_s^{H'}-\overline{U}_s^{K'} |^2 \diff s \Big|^{2p} + C\Big| \mathbb{E} | \overline{U}_0^H -\overline{U}_0^K |^2- \mathbb{E} | \overline{U}_0^{H'} -\overline{U}_0^{K'} |^2\Big|^{2p}  \\
& \quad  \leq C\, \mathbb{E} \Big| \frac{1}{t+1} \int_0^{t+1} \langle \overline{U}_s^H -\overline{U}_s^{H'} - \overline{U}_s^{K}+\overline{U}_s^{K'} , \overline{U}_s^H +\overline{U}_s^{H'} - \overline{U}_s^{K}-\overline{U}_s^{K'} \rangle \diff s \Big|^{2p}  \\
& \quad \quad + C\, | \mathbb{E} \langle \overline{U}_0^H -\overline{U}_0^{H'} - \overline{U}_0^{K}+\overline{U}_0^{K'} , \overline{U}_0^H +\overline{U}_0^{H'} - \overline{U}_0^{K}-\overline{U}_0^{K'} \rangle |^{2p} .
\end{align*}
By Cauchy-Schwarz's inequality and the stationarity of the process $\overline{U}$ (Lemma \ref{lem:asymptotic-cov}), there is
\begin{align*}
& \mathbb{E} \Big| V_t^{H,K} - V_t^{H',K'} \Big|^{2p} \\ & \quad  \leq C \mathbb{E} \Big| \frac{1}{t+1} \int_0^{t+1} \Big(\mathbb{E} | \overline{U}_s^H -\overline{U}_s^{H'} - \overline{U}_s^{K}+\overline{U}_s^{K'} |^2\Big)^{\frac{1}{2}} \diff s \Big|^{2p}  + C \Big(\mathbb{E}|\overline{U}_0^H -\overline{U}_0^{H'} - \overline{U}_0^{K}+\overline{U}_0^{K'}|^2 \Big)^{p}  \\
& \quad  \leq C  \Big| \frac{1}{t+1} \int_0^{t+1}  \Big( \mathbb{E} |\overline{U}_s^H -\overline{U}_s^{H'}|^2 \Big)^{\frac{1}{2}}  \diff s \Big|^{2p} + C\Big| \frac{1}{t+1} \int_0^{t+1}  \Big( \mathbb{E} |\overline{U}_s^{K} -\overline{U}_s^{K'} |^2 \Big)^{\frac{1}{2}}\diff s \Big|^{2p}  \\
&\quad \quad  + C \Big(\mathbb{E}  |\overline{U}_0^{K} -\overline{U}_0^{K'} |^2 \Big)^{p} + C \Big(\mathbb{E}  |\overline{U}_0^{H} -\overline{U}_0^{H'} |^2 \Big)^{p} .
\end{align*}
Using Lemma \ref{lem:Ubar-increments}, it comes
\begin{align*}
\mathbb{E} \Big| V_t^{H,K} - V_t^{H',K'} \Big|^{2p} \leq C  (|H-H'|^{2p}+|K-K'|^{2p})  \ .
\end{align*}
Hence, interpolating between the previous inequality and the result of Lemma \ref{lem:young-V},
it follows that 
\begin{align}\label{eq:H-increments-V2}
\mathbb{E} \Big| V_t^{H,K} - V_t^{H',K'} \Big|^{2p} &  \leq C\,  \left( (t+1)^{ -2p/3\left( 1 \wedge (4-4 \max(\mathcal{H}) \right) (1-\beta)} \right)  (\log(t+1)+1)^{2p}  \nonumber \\ 
&\quad \times (|H-H'|^{2p \beta}+|K-K'|^{2p \beta}) .
\end{align}
Moreover, we have for $t'\geq t$,
\begin{align*}
\mathbb{E} \Big| V_t^{H,K} - V_{t'}^{H,K} \Big|^{2p} & \leq C\, \mathbb{E} \Big( \frac{1}{t'+1} \int_{t+1}^{t'+1} | \overline{U}_s^H - \overline{U}_s^{K} |^2 \diff s \Big)^{2p} +C\, \mathbb{E} \Big( \frac{t'-t}{(t+1) (t'+1)} \int_0^{t+1} | \overline{U}_s^H - \overline{U}_s^{K} |^2 \diff s \Big)^{2p}  \\
& \leq C  (t'+1)^{-2p} |t'-t|^{2p-1} \int_{t+1}^{t'+1} \mathbb{E} | \overline{U}_s^H - \overline{U}_s^{K} |^{4p} \diff s \\ &  \quad + |t'-t|^{2p} (t+1)^{-2p-1}  \int_0^{t+1} \mathbb{E} | \overline{U}_s^H - \overline{U}_s^{K} |^{4p} \diff s ,
\end{align*}
using Jensen's inequality. Since $\overline{U}$ is a stationary process, we conclude that
\begin{align}\label{eq:t-increments-V2} 
\mathbb{E} \Big| V_t^{H,K} - V_{t'}^{H,K} \Big|^p & \leq C (t+1)^{-2p} |t'-t|^{2p} .
\end{align}
Combining \eqref{eq:H-increments-V2} and \eqref{eq:t-increments-V2}, we get the desired result.
\end{proof}
Since we are also interested in discrete ergodic means, the next Lemmas state discrete equivalents of Lemma \ref{lem:young-V} and Lemma \ref{lem:ergodic-U2}.

\begin{lemma}\label{lem:decreasing-variance-discret}
Let $\gamma \in (0,1)$ and $\mathcal{H}$ be a compact subset of $(0,1)$. Recall the process $\mathcal{V}_t^{H,K}$ defined in \eqref{eq:V-discret}. Let $p \in \mathbb{N} \setminus \{0 \}$. There exists a constant $C$ such that for all $t \ge 0$ and $H,K \in \mathcal{H}$,
\begin{align*}
\mathbb{E} |\mathcal{V}_t^{H,K}|^{2p} \leq  C\, \left( (t+1)^{-\frac{2p}{3} \left(1 \wedge (4-4 \max(\mathcal{H}) \right) } \right)  (\log(t+1)+1)^{2p} .
\end{align*}
\end{lemma}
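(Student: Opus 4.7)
The proof will closely mirror that of Lemma \ref{lem:young-V}, with the continuous stationary OU process $\overline{U}^H_s$ replaced by its piecewise-constant version $\overline{U}^H_{s_\gamma}$. Since the integrand defining $\mathcal{V}^{H,K}_t$ is centred (by stationarity, $\mathbb{E}|\overline{U}^H_{s_\gamma}-\overline{U}^K_{s_\gamma}|^2 = \mathbb{E}|\overline{U}^H_0-\overline{U}^K_0|^2$), Fubini's theorem yields
\begin{align*}
\mathbb{E}|\mathcal{V}_t^{H,K}|^{2p} = \frac{1}{(t+1)^{2p}}\int_{[0,t+1]^{2p}} \mathbb{E}\Big[\prod_{i=1}^{2p}\big(|\overline{\mathcal{U}}_{(s_i)_\gamma}|^2-\mathbb{E}|\overline{\mathcal{U}}_0|^2\big)\Big]\diff s_1\cdots \diff s_{2p},
\end{align*}
with the notation $\overline{\mathcal{U}}_s = \overline{U}^H_s-\overline{U}^K_s$. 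Applying Lemma \ref{lem:gaussian-product} to the centred Gaussian vector $(\overline{\mathcal{U}}_{(s_i)_\gamma})_{1\le i\le 2p}$ reduces the integrand to a sum, indexed by partitions $\pi\in P_{2p}$, of products of two-point covariances $|\mathbb{E}[\overline{\mathcal{U}}_{(s_i)_\gamma}\overline{\mathcal{U}}_{(s_j)_\gamma}]|$.

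The next step is to establish discrete versions of the two bounds in Lemma \ref{lem:bound-I1-I2}. By Lemma \ref{lem:asymptotic-cov} applied at the discretised times, one has
\begin{align*}
\big|\mathbb{E}[\overline{\mathcal{U}}_{s_\gamma}\overline{\mathcal{U}}_{t_\gamma}]\big| \le C\big(1\wedge |s_\gamma-t_\gamma|^{2\max(\mathcal{H})-2}\big).
\end{align*}
For $|s-t|\ge 2\gamma$ we have $|s_\gamma-t_\gamma|\ge |s-t|-\gamma \ge |s-t|/2$, so the bound is comparable to its continuous analog; for $|s-t|< 2\gamma$ the covariance is simply bounded by a constant and the contribution to a double integral over $[0,t+1]^2$ is at most $C\gamma(t+1)$, which is absorbed in the $(t+1)^{(4\max(\mathcal{H})-2)\vee 1}$ bound. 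Thus the same estimates
\begin{align*}
\mathcal{I}_1 := \int_{[0,t+1]^2}\big(\mathbb{E}[\overline{\mathcal{U}}_{(s_1)_\gamma}\overline{\mathcal{U}}_{(s_2)_\gamma}]\big)^2 \diff s_1\diff s_2 \le C(\log(t+1)+1)(t+1)^{(4\max(\mathcal{H})-2)\vee 1}
\end{align*}
and the analogous bound for the triple integral $\mathcal{I}_2$ in terms of $(t+1)^{(8\max(\mathcal{H})-5)\vee 1}$ hold, with constants now depending on $\gamma$.

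Finally, for each $\pi\in P_{2p}$ we decompose $\pi$ into cycles $c_{k_1}\cdots c_{k_N}$ as in Remark \ref{rmk:Pnasderangements}, obtaining a product of cycle-integrals. Applying Young's inequality exactly as in the three cases ($k_\ell=2$, $k_\ell>2$ even, $k_\ell>2$ odd) of the proof of Lemma \ref{lem:young-V}, each cycle contributes a power of $\mathcal{I}_1$ (and possibly one factor of $\mathcal{I}_2$ or $(t+1)$ in the odd case). Summing over $\pi$ and dividing by $(t+1)^{2p}$, the same counting ($M=p-N_\pi/2$ with $3N_\pi\le 2p$) yields
\begin{align*}
\mathbb{E}|\mathcal{V}_t^{H,K}|^{2p}\le C(t+1)^{\alpha_{\mathcal{H}}p+(2-\alpha_{\mathcal{H}})N_\pi/2-2p}(\log(t+1)+1)^{p+N_\pi/2}
\end{align*}
with $\alpha_{\mathcal{H}}=(4\max(\mathcal{H})-2)\vee 1$, which after using $(2-\alpha_{\mathcal{H}})N_\pi/2\le (2-\alpha_{\mathcal{H}})p/3$ gives the desired upper bound. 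The main difficulty is the careful handling of the small-distance regime $|s-t|<2\gamma$ in the discrete covariance integrals, but since $\gamma\in(0,1)$ is fixed this only alters constants and does not affect the growth in $t$.
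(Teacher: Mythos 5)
Your proposal is correct and follows essentially the same route as the paper: Fubini plus Lemma \ref{lem:gaussian-product}, a discrete analogue of the two integral bounds of Lemma \ref{lem:bound-I1-I2} obtained by comparing $|s_\gamma - v_\gamma|$ with $|s-v|$ (the paper does this by converting the integral to a Riemann sum over the grid and comparing cell by cell, you do it by a direct two-case comparison with a near-diagonal correction absorbed into the $(t+1)^{(4\max(\mathcal{H})-2)\vee 1}$ term --- both work), and then the identical cycle decomposition and counting from Lemma \ref{lem:young-V}. The only cosmetic difference is your remark that the constants depend on $\gamma$, which is harmless here (and in fact avoidable, as in the paper's comparison \eqref{eq:grid-comp1}).
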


\begin{proof}
The proof of Lemma \ref{lem:young-V} can be transcribed to a discrete setting and the same computations can be done. Let us first prove a discrete-equivalent of Lemma \ref{lem:bound-I1-I2}. Let 
\begin{align*}
I_{1}^*&:=  \int_{[0,t+1]^2} \left( \mathbb{E} \langle \overline{U}_{s_\gamma}^H-\overline{U}_{s_\gamma}^K, \overline{U}_{v_\gamma}^H-\overline{U}_{v_\gamma}^K\rangle \right)^2 \diff s \diff v   ,\\ 
I_{2}^* &:=  \int_{[0,t+1]^3} \Big( \mathbb{E} \langle \overline{U}_{s_\gamma}^H-\overline{U}_{s_\gamma}^K, \overline{U}_{v_\gamma}^H-\overline{U}_{v_\gamma}^K\rangle \Big)^2 \, \Big( \mathbb{E} \langle \overline{U}_{v_\gamma}^H-\overline{U}_{v_\gamma}^K, \overline{U}_{r_\gamma}^H-\overline{U}_{r_\gamma}^K \rangle \Big)^2 \,  \diff r \diff s \diff v  .
\end{align*}
To bound $I_1^*$, we proceed as in the proof of Lemma \ref{lem:bound-I1-I2}. Using Lemma \ref{lem:asymptotic-cov}, we have
\begin{align*}
|\mathbb{E} \langle \overline{U}_{s_\gamma}^H-\overline{U}_{s_\gamma}^K, \overline{U}_{v_\gamma}^H-\overline{U}_{v_\gamma}^K \rangle | &\leq |\EE \langle \overline{U}_{s_\gamma}^H, \overline{U}_{v_\gamma}^H \rangle| + |\EE \langle \overline{U}_{s_\gamma}^K, \overline{U}_{v_\gamma}^K \rangle | +|\EE \langle \overline{U}_{s_\gamma}^H,  \overline{U}_{v_\gamma}^K \rangle + \langle \overline{U}_{s_\gamma}^K , \overline{U}_{v_\gamma}^H \rangle |\nonumber \\
&\leq C \left(1 \wedge |s_\gamma-v_\gamma|^{2 \max(\mathcal{H})-2} \right). 
\end{align*}
It follows that
\begin{align*}
I_1^* & \leq C\,  \int_{[0,t+1]^2} 1 \wedge |s_\gamma-v_\gamma|^{4 \max(\mathcal{H})-4} \, \diff s \diff v  = C\,  \gamma^2\sum_{i,j=0}^{\gamma^{-1}(t+1)_\gamma} 1 \wedge | i \gamma - j \gamma|^{4 \max(\mathcal{H})-4} .
\end{align*}
Let $s \in [i \gamma, (i+1) \gamma]$, $ v \in [j \gamma , (j+1) \gamma]$, $r \in [k \gamma, (k+1) \gamma]$, and denote $\rho_1 =4 \max(\mathcal{H})-4$. If $|j\gamma - i \gamma | \ge 1 >\gamma$, then $|s-v| \leq |i \gamma - j \gamma| + \gamma \leq 2 |i \gamma - j \gamma|$. While if $|j\gamma - i \gamma | < 1$ then $|s-v| < 1+\gamma<2$. In this case, we have $1 \wedge |i \gamma - j \gamma |^{\rho_{1}} = 1 =  1 \wedge 2^{-\rho_{1}}|s-v|^{\rho_{1}} \leq 2^{-\rho_{1}} (1 \wedge |s-v|^{\rho_{1}})$. So overall we always have
\begin{align}\label{eq:grid-comp1}
1 \wedge |s-v|^{\rho_1} \ge 2^{\rho_1} (1 \wedge |i \gamma - j \gamma |^{\rho_1} ) .
\end{align}
Summing \eqref{eq:grid-comp1} over $i$ and $j$, we get
\begin{align*}
I_1^* & \leq C\,  \int_{[0,(t+1)_\gamma]^2} 1 \wedge |s-v|^{4 \max(\mathcal{H})-4}  \diff s \diff v .
\end{align*}
We already bounded the right-hand side above in the proof of Lemma \ref{lem:bound-I1-I2} (see $I_1$). Using that bound, we have
\begin{align*}
I_1^* & \leq C\,   (\log((t+1)_\gamma)+1) (t+1)_\gamma^{(4 \max(\mathcal{H})-2) \vee 1}   \\
& \leq C\,  (\log((t+1))+1) (t+1)^{(4 \max(\mathcal{H})-2) \vee 1}  .
\end{align*}
Similarly, for $I_2^*$, we have using \eqref{eq:grid-comp1}
\begin{align*}
I_2^* & \leq C\,  \int_{[0,t+1]^3} \Big( 1 \wedge |s_\gamma-v_\gamma|^{4 \max(\mathcal{H})-4} \Big)\Big( 1 \wedge |v_\gamma-r_\gamma|^{4 \max(\mathcal{H})-4}   \, \diff s \diff v \diff r \Big)  \\
& = C\,  \gamma^3 \sum_{i,j,k=0}^{\gamma^{-1}(t+1)_\gamma} \Big( 1 \wedge | i \gamma - j \gamma|^{4 \max(\mathcal{H})-4} \Big) \Big( 1 \wedge | j \gamma - k \gamma|^{4 \max(\mathcal{H})-4} \Big) \\
& \leq C  \int_{[0,(t+1)_\gamma]^3}  \Big( 1 \wedge |s-v|^{4 \max(\mathcal{H})-4} \Big)\Big( 1 \wedge |v-r|^{4 \max(\mathcal{H})-4} \Big)  \, \diff s \diff v \diff r .
\end{align*}
We already bounded the right-hand side above in the proof of Lemma \ref{lem:bound-I1-I2} (see $I_2$). Using that bound, we have
\begin{align*}
I_2^* & \leq C\,     (\log((t+1)_\gamma)+1)^2 (t+1)_\gamma^{(8 \max(\mathcal{H})-5) \vee 1}   \\
& \leq C\, (\log((t+1))+1)^2 (t+1)^{(8 \max(\mathcal{H})-5) \vee 1}.
\end{align*}
With these bounds in mind, we claim that Lemma \ref{lem:young-V} still holds if the integral is replaced by a discrete sum, as it relies essentially on Lemma \ref{lem:gaussian-product}, Lemma \ref{lem:bound-I1-I2} and properties of the integral that are also true for the sum. Hence
\begin{align*}
\mathbb{E} |\mathcal{V}_t^{H,K}|^{2p} \leq C\, \left( (t+1)^{-\frac{2p}{3} \left( 1 \wedge (4-4 \max(\mathcal{H})  \right) } \right)  (\log(t+1)+1)^{2p} .
\end{align*}
\end{proof}

\begin{lemma}\label{lem:ergodic-U2-dsicrete}
Let $\gamma \in (0,1)$ and $\mathcal{H}$ be a compact set of $(0,1)$. Let $p \in \mathbb{N} \setminus \{ 0 \}$ and $\beta \in (0,1)$. There exists a constant $C$ such that for all $H,H', K, K'$ in $\mathcal{H}$ and $t' \ge t \ge 0$,
\begin{align*} 
\mathbb{E} | \mathcal{V}_t^{H,K} - \mathcal{V}_{t'}^{H',K'} |^{2p}  & \leq  C\,  \left(  (t+1)^{ -\frac{2p}{3}\left( 1 \wedge (4-4 \max(\mathcal{H}) \right)(1-\beta) } \right)  (\log(t+1)+1)^{2p} \\ 
& \quad \times   (|H-H'|^{2p \beta}+|K-K'|^{2p\beta}) +  (1+t)^{-2p} |t-t'|^{2p} \Big) .
\end{align*}
\end{lemma}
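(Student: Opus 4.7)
The plan is to follow exactly the strategy used for the continuous analog (Lemma \ref{lem:ergodic-U2}), with two substitutions: first, replace the continuous ergodic mean $V_t^{H,K}$ by its discrete counterpart $\mathcal{V}_t^{H,K}$; second, invoke Lemma \ref{lem:decreasing-variance-discret} in place of Lemma \ref{lem:young-V} wherever the variance decay in time is needed. Since the stationarity of $\overline{U}$ (Lemma \ref{lem:asymptotic-cov}) is a pointwise-in-time property, it applies equally well at the discretised times $s_\gamma$, so no new moment estimates on $\overline{U}$ have to be established.

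The first step is to control the pure Hurst increment at fixed $t=t'$. Using the definition \eqref{eq:V-discret} together with the identity $|a|^2-|b|^2=\langle a-b,a+b\rangle$, one writes
\begin{align*}
\mathcal{V}_t^{H,K}-\mathcal{V}_t^{H',K'} &= \frac{1}{t+1}\int_0^{t+1}\langle \overline{U}^H_{s_\gamma}-\overline{U}^{H'}_{s_\gamma}-\overline{U}^K_{s_\gamma}+\overline{U}^{K'}_{s_\gamma},\,\overline{U}^H_{s_\gamma}+\overline{U}^{H'}_{s_\gamma}-\overline{U}^K_{s_\gamma}-\overline{U}^{K'}_{s_\gamma}\rangle\diff s \\
&\quad -\mathbb{E}\langle \overline{U}_0^H-\overline{U}_0^{H'}-\overline{U}_0^K+\overline{U}_0^{K'},\,\overline{U}_0^H+\overline{U}_0^{H'}-\overline{U}_0^K-\overline{U}_0^{K'}\rangle.
\end{align*}
Taking the $L^{2p}$-norm, applying Jensen and Cauchy--Schwarz, using the stationarity of $\overline{U}$ (which gives $\mathbb{E}|\overline{U}^H_{s_\gamma}-\overline{U}^{H'}_{s_\gamma}|^2=\mathbb{E}|\overline{U}_0^H-\overline{U}_0^{H'}|^2$), and finally invoking Lemma \ref{lem:Ubar-increments} to bound these second moments by $|H-H'|^2$ and $|K-K'|^2$, one obtains
\begin{align*}
\mathbb{E}|\mathcal{V}_t^{H,K}-\mathcal{V}_t^{H',K'}|^{2p} \leq C(|H-H'|^{2p}+|K-K'|^{2p}).
\end{align*}
The second step is to interpolate this bound with the decay estimate furnished by Lemma \ref{lem:decreasing-variance-discret}: by the triangle inequality $\mathbb{E}|\mathcal{V}_t^{H,K}-\mathcal{V}_t^{H',K'}|^{2p}\leq C(\mathbb{E}|\mathcal{V}_t^{H,K}|^{2p}+\mathbb{E}|\mathcal{V}_t^{H',K'}|^{2p})$, one has
\begin{align*}
\mathbb{E}|\mathcal{V}_t^{H,K}-\mathcal{V}_t^{H',K'}|^{2p} \leq C(\log(t+1)+1)^{2p}(t+1)^{-\frac{2p}{3}(1\wedge(4-4\max\mathcal{H}))}.
\end{align*}
Raising the first bound to the power $\beta$ and the second to the power $1-\beta$, and multiplying, yields exactly the Hurst-increment part of the claimed estimate.

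For the third step, one bounds the pure time-increment at fixed Hurst parameters $H,K$ by mimicking \eqref{eq:t-increments-V2}: decompose
\begin{align*}
\mathcal{V}_t^{H,K}-\mathcal{V}_{t'}^{H,K}=\frac{t'-t}{(t+1)(t'+1)}\int_0^{t+1}|\overline{U}^H_{s_\gamma}-\overline{U}^K_{s_\gamma}|^2\diff s-\frac{1}{t'+1}\int_{t+1}^{t'+1}|\overline{U}^H_{s_\gamma}-\overline{U}^K_{s_\gamma}|^2\diff s,
\end{align*}
then apply Jensen's inequality and use stationarity to reduce all moments of $|\overline{U}^H_{s_\gamma}-\overline{U}^K_{s_\gamma}|^{4p}$ to $\mathbb{E}|\overline{U}_0^H-\overline{U}_0^K|^{4p}\leq C$. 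This yields $\mathbb{E}|\mathcal{V}_t^{H,K}-\mathcal{V}_{t'}^{H,K}|^{2p}\leq C(1+t)^{-2p}|t-t'|^{2p}$. Combining the three contributions by the triangle inequality $\mathbb{E}|\mathcal{V}_t^{H,K}-\mathcal{V}_{t'}^{H',K'}|^{2p}\leq C(\mathbb{E}|\mathcal{V}_t^{H,K}-\mathcal{V}_t^{H',K'}|^{2p}+\mathbb{E}|\mathcal{V}_t^{H,K}-\mathcal{V}_{t'}^{H,K}|^{2p})$ completes the proof. No step presents any real obstacle, as all the delicate combinatorial work -- the Gaussian product identity of Lemma \ref{lem:gaussian-product} and the sharp variance decay -- has been absorbed into Lemma \ref{lem:decreasing-variance-discret}; the proof here is essentially a mechanical discrete transcription of the continuous argument.
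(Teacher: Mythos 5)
Your proof is correct and is essentially the paper's own argument: the paper simply states that the proof of Lemma \ref{lem:ergodic-U2} transcribes verbatim to the discrete setting (replacing $s$ by $s_\gamma$ and Lemma \ref{lem:young-V} by Lemma \ref{lem:decreasing-variance-discret}), and your three steps --- the Hurst-increment bound via $|a|^2-|b|^2=\langle a-b,a+b\rangle$, Cauchy--Schwarz, stationarity and Lemma \ref{lem:Ubar-increments}; the interpolation with the variance-decay lemma; and the time-increment bound via Jensen and stationarity --- are exactly that transcription. Nothing further is needed.
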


\begin{proof}
This lemma is a discrete version of Lemma \ref{lem:ergodic-U2} and relies on Lemma \ref{lem:decreasing-variance-discret}. The proof of Lemma \ref{lem:ergodic-U2} can be completely transcribed to a discrete setting (it suffices to replace the dummy variable $s$ by $s_{\gamma}$ in the integrals).
\end{proof}

\section{Proof of the results on the rectangular increments}\label{app:rect-inc}
The Garsia-Rodemich-Rumsey lemma also has a version for the rectangular increments of two-parameter functions. 
\begin{lemma}\label{lem:GRRrect}
Let $d \in \mathbb{N}^*$. Let $f:\R \times \R^{d}\to \R$ be a continuous function. Let $\mathcal{I}$ be a product of $d+1$ closed intervals of $\R$.
Then for any $p>0$ and any $\theta_1, \theta_2, \eta > \frac{1}{p}$, there exists a constant $C>0$ independent of $\mathcal{I}$ such that
\begin{align*}%
& \sup_{{\substack{ x, y \in \mathcal{I}\\ x_{1} \neq y_{1}, (x_{2},\dots,x_{d}) \neq (y_{2},\dots,y_{d})} }} \frac{| \Box_x^y f |}{( |x_1-y_1|^{\theta_1- \frac{1}{p}}  \wedge  |x_1-y_1|^{\eta- \frac{1}{p}} ) \prod_{i=2}^{d+1} |x_i-y_i |^{\theta_2-\frac{1}{p}}  }  \\ 
& \leq C \left( \int_{\mathcal{I}} \frac{| \Box_z^{z'} f |^p}{( |z_1-z'_1|^{\theta_1 p+1} \wedge |z_1-z'_1|^{\eta p+1} ) \prod_{i=2}^{d+1} |z_i-z'_i|^{\theta_2 p + 1} }   \diff z_1 \diff z'_1 \dots \diff z_{d+1} \diff z'_{d+1} \right)^{ \frac{1}{p}} .
\end{align*}
\end{lemma}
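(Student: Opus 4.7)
The plan is to adapt the multi-parameter Garsia--Rodemich--Rumsey (GRR) lemma for rectangular increments of \cite{hu2013multiparameter} so as to incorporate the mixed gauge $u \mapsto u^{\theta_1} \wedge u^{\eta}$ in the first coordinate. A direct route proceeds by iterating the one-dimensional GRR along each coordinate direction, exploiting the factorisation
\begin{equation*}
\Box_x^y f \;=\; \Box_{x_1}^{y_1} \circ \Box_{x_2}^{y_2} \circ \cdots \circ \Box_{x_{d+1}}^{y_{d+1}} f,
\end{equation*}
where each operator $\Box_{x_i}^{y_i}$ acts on the $i$-th coordinate by the one-dimensional difference $h \mapsto h(\cdot, y_i, \cdot) - h(\cdot, x_i, \cdot)$. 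Holding all coordinates but one fixed, the rectangular increment therefore reduces to an ordinary increment of a function of one real variable, to which the classical 1D GRR applies directly.

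First, I would handle the ``simple'' coordinates $i = d+1, d, \ldots, 2$ one at a time. For each such coordinate, applying the classical GRR of \cite{GRR} with $\psi(u) = u^p$ and the gauge $u \mapsto u^{\theta_2 + 1/p}$ produces the external factor $|x_i - y_i|^{\theta_2 - 1/p}$ and converts the supremum in the pair $(x_i, y_i)$ into a double integral weighted by $|z_i - z_i'|^{-(\theta_2 p + 1)}$. Between successive steps, the next supremum must be pushed inside the integrals already created; this is legitimate by Minkowski's integral inequality and monotone convergence.

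For the first coordinate, the mixed gauge $\delta(u,v) = |u-v|^{\theta_1} \wedge |u-v|^{\eta}$ forces us to use the metric-space version of GRR from \cite[Lemma 2]{gubinelli2016stochastic}, exactly as in the proof of Lemma~\ref{lem:GRR}. The volume bound $\sigma(r) \geq c\, r^{1/(\theta_1 \wedge \eta)}$ on the $\delta$-ball of radius $r$ is computed in the same way as there (splitting according to whether $r$ is small or large), and invoking \cite[Lemma 2]{gubinelli2016stochastic} in the variable $x_1$ then produces both the external factor $|x_1 - y_1|^{\theta_1 - 1/p} \wedge |x_1 - y_1|^{\eta - 1/p}$ and the internal weight $|z_1 - z_1'|^{\theta_1 p + 1} \wedge |z_1 - z_1'|^{\eta p + 1}$ in the integrand. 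The standing assumptions $\theta_1, \theta_2, \eta > 1/p$ are precisely what guarantee positivity of the resulting Hölder exponents.

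The main technical obstacle I anticipate is the commutation of the supremum with the integration at each step of the iteration: the $L^\infty \to L^p$ conversion must be made rigorous while the integrand still depends on the coordinates over which the next supremum is taken. Continuity of $f$ on the compact product $\mathcal{I}$ ensures that all suprema are genuine maxima, and Minkowski's inequality together with Fubini's theorem make the iteration legitimate. A secondary, but less serious, point is that the constant $C$ must be chosen independent of $\mathcal{I}$, which follows by a straightforward scaling argument applied to the standard 1D GRR estimate at each step.
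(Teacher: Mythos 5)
Your proposal is correct in substance but takes a genuinely different route from the paper. The paper's proof is a two-line citation: it invokes the multiparameter GRR inequality of \cite[Theorem 2.3]{hu2013multiparameter} directly, with $n=d+1$, $\psi(x)=x^p$, $p_1(u)=u^{\eta+1/p}\wedge u^{\theta_1+1/p}$ and $p_k(u)=u^{\theta_2+1/p}$ for $k\ge 2$, noting that the result stated on $[0,1]^n$ extends to arbitrary products of closed intervals. You instead essentially re-derive that multiparameter inequality by iterating the one-dimensional GRR lemma coordinate by coordinate, using the factorisation $\Box_x^y f=\Box_{x_1}^{y_1}\circ\cdots\circ\Box_{x_{d+1}}^{y_{d+1}}f$; this is in the spirit of how Hu and Le prove their theorem, so your argument is a self-contained substitute for the citation. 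Two small points deserve tightening. First, the interchange of supremum and integration between successive coordinates is not really a Minkowski-type step: since the integrands are nonnegative, all you need is $\sup_a\int g(a,b)\dd b\le\int\sup_a g(a,b)\dd b$ together with measurability of the inner supremum (guaranteed by continuity of $f$); invoking Minkowski's integral inequality here is a mislabel, though harmless. Second, the domain-independence of $C$ cannot be obtained by "a straightforward scaling argument" in the first coordinate, because the mixed gauge $u\mapsto u^{\theta_1+1/p}\wedge u^{\eta+1/p}$ is not homogeneous under dilations of the interval; the correct justification is that the one-dimensional GRR estimate, written in its integral form $|g(s)-g(t)|\le C\int_0^{|s-t|}\psi^{-1}(4B/u^2)\dd p_1(u)$, carries a universal constant that never sees the length of the interval (this is also the reason the paper's own normalisation remark is legitimate). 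With these two justifications repaired, your iteration closes, and it has the advantage of making the lemma self-contained at the cost of redoing work that \cite{hu2013multiparameter} already packages.
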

\begin{proof}
This is an application of Theorem 2.3 in \cite{hu2013multiparameter}. More precisely,  using the notations of \citep{hu2013multiparameter}, this result is obtained by taking $n=d+1$, $\psi(x)=x^p$, $p_1(u)=u^{\eta+1/p} \wedge u^{\theta_1+1/p}$ and $p_k(u) =  u^{\theta_2+1/p}$ for $k=2,...,d+1$. The proof in \cite{hu2013multiparameter} is done for $\mathcal{I}= [0,1]^n$ but it can be generalized to any product of closed intervals by normalizing the variable space.
\end{proof}
\subsection{Proof of Proposition \ref{prop:ub-justfBm-rectangular}}\label{app:proof-ub-justfBm-rectangular}
We will prove here the joint increment stationarity of $(B_{t}^H,B_{t}^{H'})_{t\geq 0}$, i.e. the relation \eqref{eq:laws}. 
Provided this equality holds, we get $\mathbb{E}\big( \Box_{(t,H)}^{(t',H')} \fBm \big)^2 = \EE \big(B^{H'}_{t'-t} - B^H_{t'-t} \big)^2$, so Proposition \ref{prop:ub-justfBm} then gives the desired result.
Hence it remains to prove \eqref{eq:laws}. The processes on both side of \eqref{eq:laws} are centred Gaussian. Thus it suffices to prove that they have the same covariance matrix. The equality of the diagonal entries corresponds to a well-known property of the fBm. Hence we focus on the extra-diagonal entries. 
In view of \eqref{eq:fBm}, we get
\begin{align*}
\mathbb{E} \left[ (\fBm_{t+s}^H - \fBm_{s}^H)\,  ( \fBm_{t'+s}^{H'} - \fBm_s^{H'}) \right] 
& = \frac{1}{\Gamma(H+1/2) \Gamma(H'+1/2)} \int_{\R} \left( (t+s-u)_+^{H-\frac{1}{2}} - (s-u)_+^{H-\frac{1}{2}} \right) \\ 
&\quad  \times \left(  (t'+s-u)_+^{H'-\frac{1}{2}} - (s-u)_+^{H'-\frac{1}{2}} \right) \diff u .
\end{align*}
Apply the  change of variables $v=u-s$ to get
\begin{align*}
\mathbb{E} \left[ (\fBm_{t+s}^H - \fBm_{s}^H)\,  ( \fBm_{t'+s}^{H'} - \fBm_s^{H'}) \right]  & = \frac{1}{\Gamma(H+1/2) \Gamma(H'+1/2)} \int_{\R} \left( (t-v)_+^{H-\frac{1}{2}} - (-v)_+^{H-\frac{1}{2}} \right) \\ 
&\quad \times  \left(  (t'-v)_+^{H'-\frac{1}{2}} - (-v)_+^{H'-\frac{1}{2}} \right) \diff v \\
& = \mathbb{E} \left( \fBm_t^H \fBm_{t'}^{H'} \right) ,
\end{align*}
which proves that the covariances of the Gaussian processes involved in  \eqref{eq:laws} are equal.

\subsection{Proof of Proposition \ref{prop:variance-ub-rectangular}}\label{app:proof-variance-ub-rectangular}
First, notice that $\Box_{(t,H)}^{(t',H')} \mathbb{B} = (1+t')^{-\alpha} \Box_{(t,H)}^{(t',H')} B + \left( (1+t)^{-\alpha} - (1+t')^{-\alpha}\right) \left(B^{H}_{t} - B^{H'}_{t}\right)$. Thus in view of Propositions \ref{prop:ub-justfBm} and  \ref{prop:ub-justfBm-rectangular}, one gets
\begin{align*}
\mathbb{E}\left( \Box_{(t,H)}^{(t',H')} \mathbb{B} \right)^2 &\leq C (1+t')^{-2\alpha} \left( |t'-t|^{2H} \vee |t'-t|^{2H'} \right) (\log^2(|t'-t|)+1)\, |H-H'|^2  \\
&\quad + C \left( (1+t)^{-\alpha}-(1+t')^{-\alpha} \right)^2 \big(t^{2H}\vee t^{2H'}\big) \, (\log^2(t)+1)\,  |H-H'|^2 .
\end{align*}
Using \eqref{eq:tinc} with $h=H$ and $h=H'$, we obtain the desired result.

\subsection{Proof of Theorem \ref{thm:regularityGarsia-rectangular}}\label{subsec:ProofTh2}
We follow the same approach as the proof of Theorem \ref{thm:whole-regularity}, and use the same notations. In particular, let $p >\frac{4}{a\varepsilon}$, and let $\theta_{1} = a - \frac{1}{p}$, $\theta_{2} = 1-\frac{1}{p}$ and $\eta \in(\frac{1}{p}, \theta_1)$. We apply Lemma \ref{lem:GRRrect} for the process $\mathbb{B}_t^H$ defined on the compact set $\mathcal{K}_{T}$ and use the fact that $\Box_{(t,H)}^{(t',H')} \mathbb{B}$ is a Gaussian random variable to get that
\begin{align}\label{eq:J1-J2-J3}
B_{T} &:= \mathbb{E} \sup_{\substack{(t,H),(t',H') \in \mathcal{K}_{T}\\t\neq t', H\neq H'}}  \frac{| \Box_{(t,H)}^{(t',H')} \mathbb{B} |^p}{(|t'-t|^{\theta_1- \frac{1}{p}} \wedge |t'-t|^{\eta- \frac{1}{p}} ) |H-H'|^{\theta_2-\frac{1}{p}} } \nonumber\\
&\leq C \int_0^T \int_0^T \int_{\mathcal{H}} \int_{\mathcal{H}} \frac{\mathbb{E} \big| \Box_{(s,h)}^{(s',h')}  \mathbb{B} \big|^p }{( |s-s'|^{\theta_1 p+1}  \wedge |s-s'|^{\eta p +1} ) |h-h'|^{\theta_2 p +1}} \diff h' \diff h \diff s' \diff s \nonumber\\
&\leq C  \int_0^T \int_0^T \int_{\mathcal{H}} \int_{\mathcal{H}} \frac{ \left( \mathbb{E} | \Box_{(s,h)}^{(s',h')}  \mathbb{B} |^2 \right)^{p/2} }{( |s-s'|^{\theta_1 p+1}  \wedge |s-s'|^{\eta p +1} ) |h-h'|^{\theta_2 p +1}} \diff h' \diff h \diff s' \diff s \nonumber\\
&=: C \left( J_1 + J_2 + J_3 \right),
\end{align}
where $J_{1}$ is the integral for $s'$ between $0$ and $(s-1)\vee 0$; $J_{2}$ for $s'$ between $(s-1)\vee 0$ and $(s+1)\wedge T$; and $J_{3}$ for $s'$ between $(s+1)\wedge T$ and $T$.
\paragraph{Bound on $J_1$ and $J_3$.} 
Using $\theta_1 p+1 > \eta p +1$ and $|s'-s|>1$, one has $|s-s'|^{\theta_1 p+1}  \wedge |s-s'|^{\eta p +1} = |s-s'|^{\eta p +1}$. Hence
\begin{align*}
J_1 \leq C \int_1^T \int_0^{s-1} \int_{\mathcal{H}} \int_{\mathcal{H}} \frac{(1+s)^{-p\alpha} \left(\mathbb{E}|B_s^h -B_s^{h'}|^2\right)^{ \frac{p}{2}} + (1+s')^{-p \alpha} \left(\mathbb{E}|B_{s'}^h -B_{s'}^{h'}|^2\right)^{ \frac{p}{2}} }{|s-s'|^{\eta p+1} |h-h'|^{\theta_2 p + 1} }  \diff h' \diff h \diff s' \diff s .
\end{align*}
In view of Proposition \ref{prop:ub-justfBm}, we get
\begin{align*}
J_1 & \leq C \int_1^T \int_0^{s-1} \int_{\mathcal{H}} \int_{\mathcal{H}} \frac{(1+s)^{-p \alpha}\left( s^{ph} \vee s^{ph'} \right) (\log^p(s)+1) }{|s-s'|^{\eta p +1} |h-h'|^{\theta_2 p +1 -p} }  \\ 
&  \qquad + \frac{(1+s')^{-p \alpha}\left( (s')^{ph} \vee (s')^{ph'} \right) (\log^p(s')+1) }{|s-s'|^{\eta p+1} |h-h'|^{\theta_2 p +1 -p}} \diff h' \diff h \diff s' \diff s .
\end{align*}
Since $\theta_2 p +1 - p =0$, the integrals over $h$ and $h'$ are finite. Thus
\begin{align*}
J_1 & \leq C \int_1^T \bigg( (1+s)^{-p \alpha + p\bar{a} } (\log^p(s)+1)+ \int_0^{s-1} (1+s')^{-p \alpha + p \bar{a}}  (\log^p(s')+1) |s-s'|^{-\eta p-1} \diff s' \bigg) \diff s 
\end{align*}
and using that $-\eta p-1<-2$, it comes that
\begin{align}\label{eq:bound-J1}
J_1 & \leq C (1+ (1+ T)^{ p (\bar{a}-\alpha) + 1} (\log^p(T)+1)) .
\end{align}
Proceeding similarly, one obtains the same bound for $J_{3}$.
\paragraph{Bound on $J_2$.} Here we use Proposition \ref{prop:variance-ub-rectangular} and the fact that $|s-s'|^{\theta_1 p+1}  \wedge |s-s'|^{\eta p +1} = |s-s'|^{\theta_{1} p +1}$ when $|s'-s| \leq 1$ to deduce that
\begin{align*}
J_2 \leq C \int_0^T \int_{(s-1)\vee 0}^{(s+1) \wedge T} \int_{\mathcal{H}} \int_{\mathcal{H}} \frac{(1+s \wedge s')^{-p \alpha} |s-s'|^{p a - p \theta_1 -1} \left( 1 + \log^2|s-s'| + \log^2(s \wedge s') \right)^{ \frac{p}{2}} }{|h-h'|^{\theta_2 p + 1 -p}} \diff h' \diff h \diff s' \diff s .
\end{align*}
Since $\theta_2 p +1 - p =0$, the integrals over $h$ and $h'$ are finite. Moreover, $p a - p\theta_1 -1 =0$, hence
\begin{align*}
J_2 & \leq C \int_0^T \int_s^{(s+1) \wedge T} (1+s)^{-p \alpha} \left( 1+ |\log (|s-s'|)|^p + |\log (s)|^p \right) \diff s' \diff s  \\
& \quad+ C \int_0^T \int_{(s-1) \vee 0}^s (1+s')^{-p \alpha} \left( 1+ |\log (|s-s'|)|^p + |\log (s')|^p \right) \diff s' \diff s\\
& =: C ( J_{21} + J_{22} ) .
\end{align*}
We bound $J_{21}$ and skip the details for $J_{22}$, as similar computations work. Using the change of  variables $x=s,\, y=s'-s$, we get
\begin{align*}
J_{21} &\leq C \int_0^T \int_0^{1} (1+x)^{-p \alpha} (1+ | \log^p( y) | + | \log^p(x) | ) \diff y \diff x \\
&\leq C \int_0^T (1+x)^{-p \alpha} (1 + | \log^p(x) | )\diff x .
\end{align*}
Thus $J_{21} \leq C (1+ T^{-p \alpha + 1}) (1 + \log^p(T))$ and the same bound holds for $J_{22}$, so that
\begin{align}\label{eq:bound-J2}
J_{2} \leq C (1 + (1+T)^{-p \alpha + 1}(1 + \log^p(T))).
\end{align}
We can now plug \eqref{eq:bound-J1} and \eqref{eq:bound-J2} into \eqref{eq:J1-J2-J3}. Choosing $\alpha > \frac{1}{p} + \bar{a}$, we obtain that $B_T$ is bounded uniformly in $T \in \mathbb{R}_+$. Hence we conclude that there exists a positive random variable $\mathbf{C}$ that has a finite moment of order $p$ such that almost surely,
\begin{align}\label{eq:asboundX}
| \Box_{(t,H)}^{(t',H')} \mathbb{B} | \leq \mathbf{C}\, |H-H'|^{\theta_2- \frac{1}{p}} \left( |t-t'|^{\eta - \frac{1}{p}} \wedge |t-t'|^{\theta_1 - \frac{1}{p}} \right) .
\end{align}
Now for $t' \ge t$, one has
\begin{align*}
(1+t')^{-\alpha}  \big| \Box_{(t,H)}^{(t',H')} B \big| \leq | (1+t)^{-\alpha} - (1+t')^{-\alpha} | |B_t^H-B_t^{H'} |  + \big| \Box_{(t,H)}^{(t',H')} \mathbb{B} \big|.
\end{align*}
To control the right-hand side of the previous inequality, it remains to use \eqref{eq:tinc} and Theorem \ref{thm:whole-regularity} for the first term of the sum, and \eqref{eq:asboundX} for the second term. Overall, it comes
\begin{align*}
 \big| \Box_{(t,H)}^{(t',H')} \fBm \big| & \leq (1+t')^{\alpha} \left(|H-H'|^{\theta_{2}-\frac{1}{p}}~ \big( |t-t'|^{\eta-\frac{1}{p}}  \wedge |t-t'|^{\theta_1-\frac{1}{p}}\big) \right).
 \end{align*}
 Using $\eta< \theta_1$ and $t'\geq t$, we have
 \begin{align*}
 \big| \Box_{(t,H)}^{(t',H')} \fBm \big| & \leq (1+t')^{\alpha+ \eta-\frac{1}{p}}   \left(|H-H'|^{1-\frac{2}{p}}~ \big( 1  \wedge |t-t'|^{a-\frac{2}{p}}\big) \right) .
\end{align*}
Setting $\alpha =\bar{a} + \frac{3}{p}$, $\eta = \frac{2}{p}$ and $\varepsilon = \frac{2}{ap}$ yields the desired result.

\section*{Declarations}

\paragraph{Conflict of interest} The authors have no conflict of interest to declare.

\paragraph{Data availability} Not applicable to this article as no datasets were generated.

\end{document}